\newcommand{\cB}			{{\mathscr{B}}}
\newcommand{\SP}[1]		{{\cB}_{r}^{{#1}}}
\newcommand{\SPone}		{{\SP{1}}}
\newcommand{\SPtwo}		{{\SP{2}}}
\newcommand{\SPthree}		{{\SP{3}}}
\newcommand{\cW}			{{\mathscr{W}}}
\newcommand{\ts}[1]		{{\textstyle{#1}}}
\newcommand{\demi}		{{\ts{\frac{1}{2}}}}
\newcommand{\op}[1]		{{\mathcal{{#1}}}}
\newcommand{\ssf}[2]		{{{#1}_{\mathsf{{#2}}}}}
\newcommand{\done}{\ding{182}}
\newcommand{\dtwo}{\ding{183}}
\newcommand{\dthree}{\ding{184}}
\newcommand{\dfour}{\ding{185}}
\newcommand{\dfive}{\ding{186}}
\newcommand{\opBmp}[1]	{{\op{B}_{k,i}}}
\newcommand{\fundterm}		{\Psi}
\newcommand{\eps}{\varepsilon}
\newcommand{\Z}{\mathbb{Z}}
\newcommand{\R}{\mathbb{R}}
\newcommand{\ds}{\displaystyle}
\newcommand{\be}{\begin{equation}}
\newcommand{\ee}{\end{equation}}
\newcommand{\bea}{\begin{eqnarray}}
\newcommand{\eea}{\end{eqnarray}}
\newcommand{\ba}{\begin{array}}
\newcommand{\ea}{\end{array}}
\newcommand{\er}[1]{\hbox{(\ref{#1})}}
\newcommand{\nn}			{{\nonumber}}
\newtheorem{theorem}            {Theorem}[section]
\newtheorem{lemma}              [theorem]{Lemma}
\newtheorem{sideremark}         [theorem]{Remark}
\newtheorem{sidenote}           [theorem]{Note}
\newtheorem{sideeg}           [theorem]{Example}
\newtheorem{sideconj}           [theorem]{Conjecture}
\newtheorem{sideassumption}   [theorem]{Assumption}
\newenvironment{remark}         {\begin{sideremark}\rm}{\end{sideremark}}
\newenvironment{assumption} {\begin{sideassumption}\it}{\end{sideassumption}}
\newenvironment{proof}		{{\it Proof:}}{\hfill{$\blacksquare$}}
\begin{document}


\title{\LARGE \bf A max-plus based fundamental solution for a class of discrete time linear regulator problems }

\author{Huan Zhang${}^{\dagger}$ ~~~~~~~~ Peter M. Dower${}^{\dagger}$
\thanks{${}^{\dagger}$ Zhang and Dower are with the Department of Electrical and Electronic Engineering, University of Melbourne, Melbourne, Victoria 3010, Australia. Email: {\tt \{hzhang5,pdower\}@unimelb.edu.au.} This research is supported by grants FA2386-12-1-4084 and DP120101549 from AFOSR and the Australian Research Council.}
}

\maketitle

\begin{abstract}
Efficient Riccati equation based techniques for the approximate solution of discrete time linear regulator problems are restricted in their application to problems with quadratic terminal payoffs. Where non-quadratic terminal payoffs are required, these techniques fail due to the attendant non-quadratic value functions involved. In order to compute these non-quadratic value functions, it is often necessary to appeal directly to dynamic programming in the form of grid- or element-based iterations for the value function. These iterations suffer from poor scalability with respect to problem dimension and time horizon. In this paper, a new max-plus based method is developed for the approximate solution of discrete time linear regulator problems with non-quadratic payoffs. This new method is underpinned by the development of new fundamental solutions to such linear regulator problems, via max-plus duality.
In comparison with a typical grid-based approach, a substantial reduction in computational effort is observed in applying this new max-plus method. A number of simple examples are presented that illustrate this and other observations.
\end{abstract}


\section{Introduction}
After more than 40 years of study, the ``linear quadratic regulator problem'' (or LQR problem) remains ubiquitous in the field of optimal control \cite{AM:89}, \cite{CM:70}. Given a specific linear time invariant system, quadratic running payoff, and terminal payoff, the objective of the LQR (optimal control) problem is to determine a control sequence that (when applied to the linear system in question) maximizes the aggregated running and quadratic terminal payoffs over a specific (possibly infinite) time horizon. It is well known that the value function defined by the LQR problem is quadratic. The Hessian of this quadratic value function is either the solution of a difference (or differential) Riccati equation (DRE) in the finite horizon case, or the stabilizing solution of an algebraic Riccati equation (ARE) in the infinite horizon case. Solutions to either equation, and hence the corresponding LQR problem, can be computed very accurately and efficiently using existing numerical tools (for example, {\sf MATLAB$^{\textsf{TM}}$}).

Both the DRE and ARE encode invariance of the space of quadratic functions (defined on the state space) with respect to the dynamic programming evolution operator associated with a quadratic running payoff and linear dynamics. Consequently, both equations are restricted in their application to problems with quadratic terminal payoffs. Where the terminal payoff employed is non-quadratic, the DRE / ARE solution path for the corresponding linear regulator problem is inherently invalid (as the corresponding value function involved need not be quadratic). Instead, it is necessary to appeal directly to the dynamic programming principle to obtain an iteration for the value function. This iteration is in general infinite dimensional, regardless of the state dimension. Consequently, approximate value function iterations employing state-space grids, basis functions, etc, arise out of necessity, but remain intrinsically limited in their application due to the curse-of-dimensionality. Consequently, where the time horizon is long or the state dimension high, the approximate solution of a linear regulator problem in the company of a non-quadratic terminal payoff remains a computationally expensive (and sometimes even prohibitive) exercise.

In this paper, a new computational method is developed for approximating the value function associated with a class of discrete time linear regulator problems in which the terminal payoff is non-quadratic. Motivated by recent related work \cite{M:08,DM1:11,DM1:12,DM2:12}, this new method relies on the development of a max-plus based fundamental solution for the class of linear regulator problems of interest. Using max-plus duality arguments \cite{AGL:08,BCOQ:94,CGQ:99,M:03,M:06,M:07,M:08}, this fundamental solution captures the behaviour of the associated dynamic programming evolution operator, and is independent of the terminal payoff employed. By applying this fundamental solution to the terminal payoff associated with a specific linear regulator problem, the attendant value function (and hence the solution of this linear regulator problem) may be computed. Furthermore, by appealing to the algebraic structure of the fundamental solution, a substantial improvement in computation time relative to grid-based iterative methods can be achieved. This improvement is demonstrated via a number of computational examples. In addition, the limiting behaviour of finite horizon linear regulator problems is investigated via the fundamental solutions presented. While value functions associated with non-quadratic terminal payoffs are typically non-quadratic on finite horizons, it is shown (under mild conditions) that these converge to quadratic value functions in the infinite horizon. There, the effect of a non-quadratic terminal payoff is shown to reduce to an additive offset in this infinite horizon limit. The convergence results employed generalize well known DRE / ARE results \cite{AM:89,BGP85,CM:70}. Preliminary results by the authors have recently been reported in \cite{ZD1:13, ZD2:13}.

In terms of organization, Section \ref{sec:LQR} describes the linear regulator problem and associated max-plus vector spaces of interest. Section \ref{sec:fundamental} derives the §max-plus fundamental solutions and discusses their properties. Section \ref{sec:inf} discusses the infinite horizon linear regulator problem with non-quadratic terminal payoff. Examples are given in Section \ref{sec:exam} to demonstrate the computational advantages of the proposed method. Section \ref{sec:conc} is a brief conclusion. Throughout, $\Z_{\ge 0}$ and $\Z_{>0}$ are used to denote the non-negative and positive integers respectively. $\R^-\doteq \R\cup\{-\infty\}$ is used to denote the extended reals, while $\R^{n}$ denotes $n$-dimensional Euclidean space equipped with the standard $2$-norm denoted by $|\cdot|$. $\ssf{\lambda}{min}(A)$ and $\ssf{\lambda}{max}(A)$ denote respectively the smallest and largest eigenvalue of matrix $A\in\R^{n\times n}$. $I\in\R^{n\times n}$ and $\op{I}$ are used to denote the $n$ by $n$ identity matrix and an identity operator respectively.


\section{Linear regulator problems with non-quadratic payoff}
\label{sec:LQR}

\subsection{Optimal control problem}
Throughout, attention is restricted to discrete-time time invariant linear systems of the form
\begin{align}
	x_{k+1} & = A\, x_k+B\, w_k\,, \quad x_0=x,
	\label{eq:system}
\end{align}
in which $x_k\in\R^n$ and $w_k\in\R^m$ denote the state and input, both at time $k\in\Z_{\ge 0}$, and $x\in\R^n$ denotes the initial state. $A\in\R^{n\times n}$ and $B\in \R^{n\times m}$ denote constant matrices with real-valued entries. The following properties concerning \er{eq:system} are assumed to hold throughout.
\begin{assumption}
\label{ass:system}
(i) $[A, B]$ is controllable; and (ii) $\text{rank}(B)=m\le n$.
\end{assumption}
The value function $W_K:\R^n\rightarrow\R$ of a linear regulator problem defined on time horizon $K\in\Z_{\ge 0}$ is given by
\begin{align}
	W_K(x)
	& \doteq
	\sup_{w\in\cW[0,K-1]} J_K(x;\, w)\,,
	\label{eq:value}
\end{align}
in which $\cW[0,K-1] \doteq (\R^m)^K$ denotes the attendant space of input sequences with indices in $[0,K-1]\cap\Z$, and {$J_K:\R^n\times\cW[0,K-1]\rightarrow\R$} denotes the total (accumulated running plus terminal) payoff
\begin{align}
	J_K(x;w)\doteq\sum_{k=0}^{K-1}\left(\ts{\frac{1}{2}}x_k^T\, \Phi \, x_k - \ts{\frac{\gamma^2}{2}}\, |w_k|^2 \right) + \Psi(x_K)\,,
	\label{eq:payoff}
\end{align}
in which $w_k\in\R^m$ denotes the $k^{th}$ element of sequence $w\in\cW[0,K-1]$, and $x_k$ denotes the corresponding element of the state sequence generated by \er{eq:system} subject to this input sequence. The running payoff in \er{eq:payoff} is parameterized by $\Phi\in\R^{n\times n}$ (a symmetric and positive definite real-valued matrix, i.e. $\Phi=\Phi^T>0$), and a gain parameter $\gamma\ge0$. The terminal payoff is denoted by the function $\Psi:\R^n\rightarrow\R$.

\begin{remark}
\label{rmk:W0}
Note that by convention, $W_0(x)=\Psi(x),\,\,x\in\R^n$.
\end{remark}

\subsection{Non-quadratic payoffs, attendant max-plus vector spaces, and duality}
The class of optimal control problems described above (and of interest in this paper) is further restricted to those with non-quadratic terminal payoffs that enjoy a quadratic upper bound. In formalizing this assumption, and in the subsequent development of a max-plus based solution to this class of problems, it is convenient to define a hierarchy of three function spaces. In particular, define $\SPone\subset\SPtwo\subset\SPthree$ as
\begin{equation}
	\begin{aligned}
		\SPone & \doteq \left\{ \phi\in\SPtwo \, \biggl| \, \phi \text{ is convex} \right\}\,,
		\\
		\SPtwo & \doteq \left\{ \phi\in\SPthree \, \biggl| \, \phi \text{ is semi-convex} \right\}\,,
		\\
		\SPthree & \doteq \left\{ \phi : \R^n\rightarrow\R^- \, \biggl|\, \exists \ c\in\R\text{ s.t. } \phi(x) \le \ts{\frac{r}{2}} \, |x|^2 + c\text{ for all } x\in\R^n \right\}\,.
	\end{aligned}
	\label{eq:SP-all}
\end{equation}
\begin{assumption}
\label{ass:growth}
There exists an $r\in\R$ such that the terminal payoff $\Psi$ in \er{eq:payoff} satisfies $\Psi\in\SPthree$.
\end{assumption}
In view of \er{eq:SP-all}, recall that a max-plus based fundamental solution for a class of continuous time LQR problems with finite dimensional dynamics was formulated and developed in \cite{M:08} for terminal payoffs in the space $\SPtwo$. (Related infinite dimensional extensions have also been developed, see \cite{DM1:11,DM2:12,DM1:12}.) In the spirit of that work, it may be shown that the function spaces \er{eq:SP-all} are all max-plus vector spaces (see for example \cite{M:06}). In particular, $a\otimes \phi_1\oplus \phi_2\in\SP{i}$ for all $a\in\R^-$, $\phi_{1,2}\in\SP{i}$, and $i\in{1,2,3}$, where the binary operations $\oplus$ and $\otimes$ denote max-plus addition and multiplication, viz
\begin{align}
	a \oplus b & \doteq \max(a,\, b)\,, \quad
	a \otimes b \doteq a + b\,.
	\nn
\end{align}
The max-plus integral of $\phi\in\SP{i}$ is similarly defined as $\int_{\R^n}^\oplus \phi(x)\, dx \doteq \sup_{x\in\R^n} \phi(x)$. With a view to employing primal-dual relationships defined with respect to each of these spaces, it is convenient to define three corresponding functions $\psi^i(\cdot,z)\in\SP{i}$, parametrized by $z\in\R^n$, as
\begin{equation}
	\begin{aligned}
		\psi^1(x,z)
		 &\doteq z^T x,\quad
		\\
		\psi^2(x,z)
		&\doteq -\demi\, (x-z)^T \, M\, (x-z),\quad
		\\
		\psi^3(x,z)
		 &\doteq \delta(x-z).
	\end{aligned}
	\label{eq:psi-all}
\end{equation}
Here, $M = M^T\in\R^{n\times n}$ is positive definite, and $\delta:\R^n\rightarrow\R^-$ denotes the extended real-valued indicator function defined by
$
\delta(\xi)
	 \doteq \left\{ \begin{aligned}
		0\,, && \xi = 0\,,
		\\
		-\infty\,, && \xi \ne 0\,.
	\end{aligned} \right.
$
As mentioned, these functions $\psi^i$ of \er{eq:psi-all} may be used to define primal-dual relationships with respect to each  function space $\SP{i}$. In particular, for any $\phi\in\SP{i}$, it may be noted that the primal $\phi$ and dual $a$ are related via
\begin{align}
	\phi
	& = \op{D}_{\psi^i}^{-1} \, a\,, \quad
	a = \op{D}_{\psi^i} \, \phi\,,
\end{align}
where $\psi^i$ is as per \er{eq:psi-all}, and $\op{D}_{\psi^i}$, $\op{D}_{\psi^i}^{-1}$ denote respectively the dual and inverse dual (with respect to function $\psi^i\in\SP{i}$) defined by
\begin{align}
	\op{D}_{\psi^i} \, \phi
	& = \left( \op{D}_{\psi^i} \, \phi \right)(\cdot)
	\doteq -\int_{\R^n}^\oplus \psi^i(x,\cdot) \otimes \left( - \phi(x) \right)\, dx\,,
	\label{eq:op-dual}
	\\
	\op{D}_{\psi^i}^{-1} \, a
	& =  \left( \op{D}_{\psi^i}^{-1} \, a \right)(\cdot)
	\doteq \int_{\R^n}^\oplus \psi^i(\cdot,z) \otimes a(z) \, dz\,.
	\label{eq:op-inv-dual}
\end{align}
By inspection of \er{eq:psi-all}, $\op{D}_{\psi^1}$ is the well-known convex dual, while $\op{D}_{\psi^2}$ is the semi-convex dual employed in finite dimensions in \cite{FM:00,M:03,M:04,M:06,M:07,M:08}, and in infinite dimensions in \cite{DM1:11,DM2:12,DM1:12}. $\op{D}_{\psi^3}$ can be verified directly as $(\op{D}_{\psi^3}\, \phi)(z) = -\max_{x\in\R^n}\left\{\delta(x-z)-\phi(x)\right\}=\phi(z)$. That is, the max-plus dual (with respect to $\psi^3\in\SP{3}$) of any function in $\SP{3}$ is itself. For these duality operators $\op{D}_{\psi^i}$ and $\op{D}_{\psi^i}^{-1}$ of \er{eq:op-dual} and \er{eq:op-inv-dual} to be well defined for the fundamental solutions in Section \ref{sec:fundamental} (see Remark \ref{Rmk:exis}), the following assumptions regarding the basis functions \er{eq:psi-all} are posed. 
\begin{assumption}
\label{ass:existence}
(i=1) $P_k^{-1}$ exists for all $k\in\Z_{>0}$, where $P_k$ satisfies the difference Riccati equation (DRE)
\begin{align}
P_{k+1} = \Phi+A^T\, P_k\, A+A^T \, P_k \, B \left( \gamma^2\, I - B^T\, P_k\, B \right)^{-1} B^T\, P_k\, A\,,
\label{eq:DRE}
\end{align}
with $P_0 = 0$.

(i=2) There exists an $M = M^T\in\R^{n\times n}$, $M>0$, such that $P_k + M > 0$ for all $k\in\Z_{>0}$, where $P_k$ satisfies the DRE \er{eq:DRE} with $P_0 = -M$.
\end{assumption}


\subsection{Dynamic programming}
A standard application of dynamic programming (see, for example, \cite{B:05}) to the optimal control problem defined by \er{eq:value} yields a (one-step) dynamic programming principle for the finite horizon value function $W_{k}:\R^n\rightarrow\R$ indexed by time horizon $k\in\Z_{\ge 0}$. In particular
\begin{equation}
	W_{k+1} = \op{S}_1 \, W_k\,, \quad W_0 = \Psi,
	\label{eq:DPP}
\end{equation}
where $\op{S}_1$ denotes the (one-step) dynamic programming evolution operator defined by
\begin{align}
	(\op{S}_1 \, \phi)(x)
	= \left(\op{S}_1^{\Phi,\gamma}\, \phi\right)(x)
	& \doteq
	\sup_{w\in\R^m} \left\{{\frac{1}{2}} x^T\, \Phi\, x-{\frac{\gamma^2}{2}} \, |w|^2 + \phi(A\, x + B\, w) \right\}.
	\label{eq:op-DPP-1}
\end{align}
(Superscript notation $\op{S}_1^{\Phi,\gamma}$ will be used where convenient to emphasize the explicit dependence on $\Phi$ and $\gamma$.) Where the terminal payoff $\Psi:\R^n\rightarrow\R$ is a quadratic function of the form $\Psi(x) = \demi\, x^T\, \Lambda\, x$ (with $\Lambda=\Lambda^T\ge0$, $\Lambda\in\R^{n\times n})$, the value function $W_k:\R^n\rightarrow\R$ is also a quadratic function, with $W_k(x)=\demi\, x^T\, P_k\, x$ for all $k\in\Z_{\ge 0}$. As \er{eq:DPP} holds for all $x\in\R^n$, the value function iteration defined by \er{eq:DPP} with respect to the time horizon $k\in\Z_{\ge 0}$ immediately reduces to DRE \er{eq:DRE} with $P_0=\Lambda$. This DRE describes a finite dimensional representation for the potentially infinite dimensional iteration \er{eq:DPP}. The key to the reduced order representation \er{eq:DRE} of \er{eq:DPP} is the fact that the space of quadratic functions is invariant with respect to the dynamic programming evolution operator $\op{S}_1$ of \er{eq:op-DPP-1}. Where the terminal payoff is a non-quadratic function, this invariance cannot be exploited. That is, DRE \er{eq:DRE}  need not hold.

The definition \er{eq:op-DPP-1} of the one-step dynamic programming evolution operator $\op{S}_1$ may be extended to the $(k+1)$-step case, $k\in\Z_{>0}$, via the recursion
\begin{align}
	\op{S}_{k+1} \, \phi
	& = \op{S}_1 \left( \op{S}_k\, \phi \right)
	= \op{S}_1\, \op{S}_k\, \phi\,.
	\label{eq:op-DPP-k}
\end{align}
\begin{remark}
\label{rmk:semigroup-1}
By convention (see Remark \ref{rmk:W0}), define $\op{S}_0\doteq \op{I}$. \er{eq:op-DPP-k} implies that the time indexed set of operators $\{\op{S}_k, k\in\Z_{\ge0}\}$ satisfies the property $\op{S}_{k_1+k_2}=\op{S}_{k_1}\op{S}_{k_2}, k_1,k_2\in\Z_{\ge0}$. Hence, this set of operators forms a semigroup.
\end{remark}

The value function $W_k$ of \er{eq:value} may accordingly be expressed in terms of the terminal cost $\Psi$ and $\op{S}_k$ via $W_k = \op{S}_k\, \Psi$, c.f. \er{eq:DPP}. Invariance of the max-plus vector spaces $\SP{i}$ of \er{eq:SP-all} with respect to this family of evolution operators is key to the subsequent development of a max-plus based solution to the optimal control problem of \er{eq:value}.
\begin{theorem}
\label{thm:Sk-invariant}
Suppose $\ssf{\lambda}{max}(A^TA)<1$. Then, for any given $i\in\{1,2,3\}$, $r\in\R_{>0}$, there exist $\Phi_0\in\R^{n\times n}$, $\Phi_0\ge0$, and $\gamma_0\in\R_{>0}$ such that for all $\Phi\in\R^{n\times n}$, $\Phi\le\Phi_0$, $\gamma\in\R_{>0}$, $\gamma\ge \gamma_0$,
\begin{align}
	\Psi\in\SP{i}
	& \quad \Longrightarrow \quad
	\op{S}_{k}\, \Psi \equiv
	\op{S}_{k}^{\Phi,\gamma}\, \Psi\ \in\ \SP{i}
	\label{eq:Sk-invariant}
\end{align}
for all $k\in\Z_{>0}$.
\end{theorem}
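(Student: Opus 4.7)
The plan is to establish one-step invariance $\op{S}_1(\SP{i}) \subseteq \SP{i}$ under the stated choices of $\Phi_0$ and $\gamma_0$, and then extend to arbitrary $k \in \Z_{>0}$ by induction using the semigroup property $\op{S}_{k+1} = \op{S}_1 \op{S}_k$ recorded in Remark \ref{rmk:semigroup-1}.

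The heart of the argument is the $\SP{3}$ quadratic-growth estimate. Given $\phi \in \SP{3}$ with $\phi(x) \leq \ts{\frac{r}{2}}|x|^2 + c$, substituting this bound into \er{eq:op-DPP-1} and completing the square in $w$ (admissible once $\gamma^2 I - rB^TB > 0$) yields
\begin{align}
(\op{S}_1 \phi)(x) \leq \ts{\frac{1}{2}} x^T \bigl[\Phi + rA^TA + r^2 A^T B(\gamma^2 I - rB^TB)^{-1} B^T A\bigr] x + c. \nn
\end{align}
Setting $\alpha \doteq 1 - \ssf{\lambda}{max}(A^TA) \in (0,1]$ (positive by hypothesis) gives $rA^TA \leq r(1-\alpha)I$, producing slack $r\alpha I$ between $rA^TA$ and $rI$. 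Choose $\gamma_0$ large enough that, for all $\gamma \geq \gamma_0$, the third bracketed summand is dominated by $\ts{\frac{r\alpha}{2}}I$, and set $\Phi_0 \doteq \ts{\frac{r\alpha}{2}}I$. The full bracket is then bounded above by $rI$, and $\op{S}_1 \phi \in \SP{3}$ follows with the same growth coefficient $r$ (only the additive constant moves).

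The structural preservations for $i=1,2$ are comparatively straightforward. For $i=1$, convexity of $\phi$ makes $x \mapsto \phi(Ax + Bw)$ convex in $x$ for each fixed $w$; the supremum over $w$ of a pointwise-convex family is convex in $x$, and adding $\ts{\frac{1}{2}}x^T\Phi x$ (convex since $\Phi > 0$) preserves convexity. For $i=2$, decompose any semi-convex $\phi$ as $\phi(y) = \tilde\phi(y) - \ts{\frac{1}{2}}y^T M y$ with $\tilde\phi$ convex and some $M > 0$; after expanding $(Ax+Bw)^T M (Ax+Bw)$ and transferring $\ts{\frac{1}{2}}x^T A^T M A x$ across, the function of $w$ defining $\op{S}_1 \phi + \ts{\frac{1}{2}}x^T A^T M A x$ reduces, for each fixed $w$, to $\tilde\phi(Ax+Bw)$ plus a quadratic that is convex in $x$. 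Hence $\op{S}_1 \phi$ is semi-convex with modulus at most $A^T M A$; iterating gives modulus bounded by $A^{kT} M A^k$, which remains finite (indeed tends to zero) because $\ssf{\lambda}{max}(A^TA) < 1$.

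The only genuine obstacle lies in the $\SP{3}$ estimate, where the hypothesis $\ssf{\lambda}{max}(A^TA) < 1$ is precisely what supplies the slack needed to absorb the $B$-induced coupling perturbation into the quadratic ceiling with the \emph{same} growth coefficient $r$. The convex and semi-convex preservations are standard parametric-convexity arguments requiring no additional tuning of $\gamma_0$ or $\Phi_0$ beyond the positive-definiteness of $\gamma^2 I - rB^TB$ already secured above. Extension from one step to arbitrary $k$ is then immediate by induction.
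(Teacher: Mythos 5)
Your proposal is correct and follows essentially the same route as the paper's proof: the same quadratic ceiling $\Phi + r\,A^TA + r^2 A^TB(\gamma^2 I - r\,B^TB)^{-1}B^TA \le r\,I$ obtained by splitting the spectral slack $r(1-\ssf{\lambda}{max}(A^TA))$ between $\Phi_0$ and the $\gamma$-dependent term (the paper uses thirds where you use halves), followed by preservation of convexity/semiconvexity for $i=1,2$ and induction via the semigroup property. The only substantive difference is that you supply an explicit semiconvexity-preservation argument (via the decomposition $\phi(y) = \tilde\phi(y) - \ts{\frac{1}{2}}y^TMy$, yielding modulus $A^TMA$) where the paper simply cites Theorem 4.9 of \cite{M:06}.
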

\if{false}
The case for $i=2$ is proved in \cite{M:06}. The proofs for $i=1$ and $i=3$ are analogous and omitted for brevity.
\fi
\begin{proof}
First consider the case where $i=3$. In order to show that $\SPthree$ is invariant as per \er{eq:Sk-invariant}, an induction argument is applied. To this end, suppose that $\Psi\in\SPthree$, that is, there exists $c\in\R$ such that $\Psi(x) \le \ts{\frac{r}{2}} \, |x|^2 + c\text{ for all } x\in\R^n $. Applying \er{eq:op-DPP-1},
\begin{align}
	\left(\op{S}_1^{\Phi,\gamma}\, \Psi\right) (x)
	& = \sup_{w\in\R^m} \left\{\ts{\frac{1}{2}}\,x^T\, \Phi\, x - \ts{\frac{\gamma^2}{2}}\, |w|^2 + \Psi(Ax+Bw)\right\}
	\nn\\
	& \le \sup_{w\in\R^m}\left\{\ts{\frac{1}{2}}\,x^T\, \Phi\, x- \ts{\frac{\gamma^2}{2}}\, |w|^2 + \ts{\frac{r}{2}}\, |Ax+Bw|^2+c\right\}
	\nn\\
	& = \demi\, x^T\,  \Xi^{\Phi,\gamma}_r\, x +c
	\le \demi\, \ssf{\lambda}{max}\left( \Xi^{\Phi,\gamma}_r\right)\, |x|^2 +c\,,
	\label{eq:S-1-bound}
\end{align}
where
\begin{align}
	x^T\,\Xi^{\Phi,\gamma}_r\,x
	& \doteq \sup_{w\in\R^m}\left\{\ts{\frac{1}{2}}\,x^T\, \Phi\, x- \ts{\frac{\gamma^2}{2}}\, |w|^2 + \ts{\frac{r}{2}}\, |Ax+Bw|^2\right\}
	\nn\\
	& = x^T\,\left( \Phi+r\,A^TA+r^2\, A^T\, B\left( \gamma^2\, I - r\, B^T\, B\right)^{-1} B^T\, A\right)\,x\,.
	\nn
\end{align}
Select $\Phi_0\in\R^{n\times n}$ positive semi-definite such that
\begin{align}
0<\ssf{\lambda}{max}(\Phi_0)\le \ts{\frac{r}{3}}(1-\ssf{\lambda}{max}(A^TA))
\label{eq:Phi-0-select}
\end{align}
and $\gamma_0\in\R_{>0}$ such that
\begin{equation}
	\gamma_0^2 \, I
	\ge 2\, r\, B^T\, B\,,
	\quad
	\gamma_0^2 \, I
	\ge \ts{\frac{r^2}{\ssf{\lambda}{max}(\Phi_0)}} A^T\, B \, B^T\, A\,.
	\label{eq:gamma-0-select}
\end{equation}
(By inspection, note that such a $\gamma_0$ and $\Phi_0$ always exist.) Hence, for any $\Phi\in\R^{n\times n}$, $0\le \Phi\le \Phi_0$, and $\gamma\in\R_{>0}$, $\gamma\ge \gamma_0$,  the left-hand inequality of \er{eq:gamma-0-select} implies that
\begin{align}
	\gamma^2\, I - r\, B^T\, B
	& \ge \gamma_0^2\, I - r\, B^T\, B
	\ge \gamma_0^2\, I - \ts{\frac{\gamma_0^2}{2}} \, I
	= \ts{\frac{\gamma_0^2}{2}} \, I\,.
	\nn
\end{align}
Consequently, $\gamma^2\, I - r\, B^T\, B$ is invertible, with $(\gamma^2\, I - r\, B^T\, B)^{-1} \le \ts{\frac{2}{\gamma_0^2}}\, I$. Furthermore, by definition of $\Xi^{\Phi,\gamma}_r$,
\begin{align}
\label{eq:Hessian-bound}
	\Xi^{\Phi,\gamma}_r
	& =  \Phi +r\,A^TA+ r^2\, A^T\, B\left( \gamma^2\, I - r\, B^T\, B\right)^{-1} B^T\, A
\\\nn
	 &\le  \ssf{\lambda}{max}( \Phi )\,I+\ssf{\lambda}{max}(A^TA)\, I + \ts{\frac{2\, r^2}{\gamma_0^2}} A^T\, B\, B^T\, A
\\\nn
 &\le \ssf{\lambda}{max}( \Phi_0 )\, I +\ssf{\lambda}{max}(A^TA)\, I+2\, \ssf{\lambda}{max}( \Phi_0 )\, I
 \le r\, I\,,
\end{align}
where the third and fourth inequalities follow by the inequalities of \er{eq:gamma-0-select} and \er{eq:Phi-0-select} respectively. Hence, \er{eq:S-1-bound} yields that for any $\Phi\in\R^{n\times n}$, $\gamma\in\R_{>0}$ such that $\Phi\le \Phi_0$, $\gamma\ge\gamma_0$,
\begin{align}
	\left(\op{S}_1^{\Phi,\gamma}\, \Psi\right) (x)\le \ts{\frac{r}{2}} \, |x|^2+c
\end{align}
holds for all $x\in\R^n$. That is, $\op{S}_1^{\Phi,\gamma}\, \Psi\in\SP{3}$, so the stated assertion holds for $k=1$. In order to show that it also holds for any $k\in\Z_{>0}$, suppose that $\op{S}_k^{\Phi,\gamma}\, \Psi\in\SP{3}$, that is, there exists $\bar{c}\in\R$ such that $\op{S}_k^{\Phi,\gamma}(x) \le \ts{\frac{r}{2}} \, |x|^2 + \bar{c}\text{ for all } x\in\R^n $. Then, applying \er{eq:op-DPP-k} followed by \er{eq:op-DPP-1},
\begin{align}
	\left(\op{S}_{k+1}^{\Phi,\gamma} \, \Psi\right)(x)
	& = \left( \op{S}_1^{\Phi,\gamma} \, \op{S}_k^{\Phi,\gamma} \, \Psi \right)(x)
\le  \sup_{w\in\R^m}\left\{\ts{\frac{1}{2}}\,x^T\, \Phi\, x- \ts{\frac{\gamma^2}{2}}\, |w|^2 + \left(\op{S}_k^{\Phi,\gamma} \, \Psi\right)\left( A\, x + B\, w \right)
	\right\}
	\nn\\
	& \le \sup_{w\in\R^m}\left\{\ts{\frac{1}{2}}\,x^T\, \Phi\, x- \ts{\frac{\gamma^2}{2}}\, |w|^2 + \ts{\frac{r}{2}}\, |Ax+Bw|^2 +\bar{c}\right\} = \demi\, x^T\, \Xi^{\Phi,\gamma}_r \, x +\bar{c} \le \ts{\frac{r}{2}} \, |x|^2 +\bar{c}\,,
	\nn
\end{align}
where the last inequality follows by \er{eq:Hessian-bound}. That is, $\op{S}_{k+1}^{\Phi,\gamma}\, \Psi\in\SP{3}$. Hence, by induction, the stated assertion holds for $i=3$.

In order to show that the stated assertion holds for $i\in\{1,2\}$, inspection of \er{eq:SP-all} and the fact that $\SPone\subset\SPtwo\subset\SPthree$ reveals that it only remains to be shown that $\op{S}_1^{\Phi,\gamma}$ preserves convexity and semiconvexity (respectively). The fact that semiconvexity is preserved is well-known, see for example Theorem 4.9 on page 67 in \cite{M:06}. The convex case is included to illustrate the arguments involved. In particular, fix any $x_{1,2}\in\R^n$, $\lambda\in(0,1)$, and $\phi\in\SPone$. Then, by convexity of $\phi$, and semi-positiveness property of $\Phi\ge0$
\begin{align}
	\left( \op{S}_1^{\Phi,\gamma}\, \phi \right)&(\lambda\, x_1  + (1-\lambda)\, x_2)
\nn\\
&= \sup_{w\in\R^m} \left\{ \ba{c}\ts{\frac{1}{2}}\,(\lambda\, x_1  + (1-\lambda)\, x_2)^T\, \Phi\, (\lambda\, x_1  + (1-\lambda)\, x_2) \\- \ts{\frac{\gamma^2}{2}} \, |w|^2 + \phi\left( A(\lambda \, x_1+(1-\lambda)\, x_2) + B\, w \right)\ea\right\}
	\nn\\
	&=\sup_{w\in\R^m} \left\{ \ba{c}\ts{\frac{1}{2}}\,(\lambda\, x_1  + (1-\lambda)\, x_2)^T\, \Phi\, (\lambda\, x_1  + (1-\lambda)\, x_2) \\- \ts{\frac{\gamma^2}{2}} \, |w|^2 + \phi\left (\lambda(Ax_1+Bw)+(1-\lambda)(Ax_2+Bw) \right)\ea\right\}
	\nn\\
	& \le \sup_{w\in\R^m} \left\{\ba{c} \ts{\frac{\lambda}{2}}\,x_1^T\, \Phi\, x_1+\ts{\frac{(1-\lambda)}{2}}x_2^T\,\Phi\,x_2 - \ts{\frac{\gamma^2}{2}} \, |w|^2 + \\ \lambda\, \phi(A\, x_1 + B\, w)+ (1-\lambda) \, \phi(A\, x_2 + B\, w) \ea\right\}
	\nn\\
	& \le \ba{c}\lambda\, \sup_{w\in\R^m}\left\{\ts{\frac{1}{2}}\, x_1^T \, \Phi\, x_1 - \ts{\frac{\gamma^2}{2}} \, |w|^2 + \phi(A\, x_1 + B\, w)\right\}
	\\
	+ (1-\lambda)\, \sup_{w\in\R^m}\left\{ \ts{\frac{1}{2}}\,x_2^T\, \Phi\, x_2 -\ts{\frac{\gamma^2}{2}} \, |w|^2 + \phi(A\, x_2 + B\, w)\right\}\ea
	\nn\\
	& = \lambda \left( \op{S}_1^{\Phi,\gamma} \, \phi \right) (x_1) + (1-\lambda) \left( \op{S}_1^{\Phi,\gamma} \, \phi \right) (x_2)\,.
	\nn
\end{align}
Hence, $\op{S}_1^{\Phi,\gamma}\, \phi$ is convex, thereby demonstrating that $\op{S}_1^{\Phi,\gamma}\, \phi\in\SPone$.
\end{proof}

The max-plus linearity of the $k$-step dynamic programming evolution operators $\op{S}_k$ of \er{eq:op-DPP-1} does not depend on the specific max-plus linear space $\SP{i}, i\in\{1,2,3\}$. The case of $i=2$ is proved in Theorem 4.5 on page 66 of \cite{M:06}.
\begin{lemma}
\label{lem:linear}
The $k$-step dynamic programming evolution operator $\op{S}_k$ of \er{eq:op-DPP-k} is max-plus linear for all $k\in\Z_{>0}$. That is, for all
for all $a\in\R^-$, $\phi,\,\theta\in\SP{i}$, $i\in\{1,2,3\}$, and $k\in\Z_{>0}$,
\begin{align}
	\op{S}_k \, (a\otimes \phi \oplus \theta)
	& = a\otimes (\op{S}_k\, \phi) \oplus (\op{S}_k \, \theta)\,.
	\label{eq:linear}
\end{align}
\end{lemma}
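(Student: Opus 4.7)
The plan is to proceed by induction on $k$, with the base case $k=1$ being the crux and the inductive step being essentially automatic from the semigroup property \er{eq:op-DPP-k}.

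For the base case, I would substitute directly into the definition \er{eq:op-DPP-1}. The key algebraic observation is the elementary identity
\begin{align*}
\sup_{w\in\R^m}\max(f(w),g(w)) = \max\bigl(\sup_{w\in\R^m}f(w),\,\sup_{w\in\R^m}g(w)\bigr),
\end{align*}
which is precisely the statement that taking a pointwise max commutes with taking a supremum over the same free variable. Applied to $f(w) = \ts{\frac{1}{2}}x^T\Phi x - \ts{\frac{\gamma^2}{2}}|w|^2 + a + \phi(Ax+Bw)$ and $g(w) = \ts{\frac{1}{2}}x^T\Phi x - \ts{\frac{\gamma^2}{2}}|w|^2 + \theta(Ax+Bw)$, and using that the quadratic state and input terms are independent of the $\oplus$ operation on the terminal part, this yields $\op{S}_1(a\otimes \phi \oplus \theta)(x) = a\otimes(\op{S}_1\phi)(x)\oplus(\op{S}_1\theta)(x)$ for every $x\in\R^n$, establishing \er{eq:linear} at $k=1$.

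For the inductive step, suppose \er{eq:linear} holds for some $k\in\Z_{>0}$. By Theorem \ref{thm:Sk-invariant}, the invariance of $\SP{i}$ under $\op{S}_k$ ensures $\op{S}_k\phi$ and $\op{S}_k\theta$ remain in $\SP{i}$, so the base case applies to them. Then using \er{eq:op-DPP-k} and the inductive hypothesis,
\begin{align*}
\op{S}_{k+1}(a\otimes\phi\oplus\theta)
&= \op{S}_1\bigl(\op{S}_k(a\otimes\phi\oplus\theta)\bigr)
= \op{S}_1\bigl(a\otimes(\op{S}_k\phi)\oplus(\op{S}_k\theta)\bigr) \\
&= a\otimes\op{S}_1(\op{S}_k\phi)\oplus\op{S}_1(\op{S}_k\theta)
= a\otimes(\op{S}_{k+1}\phi)\oplus(\op{S}_{k+1}\theta),
\end{align*}
where the third equality invokes the base case.

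The only subtle point is ensuring the suprema in the base case are legitimately interchanged, i.e.\ that no $+\infty$ values arise. Since $\phi,\theta\in\SP{i}\subseteq\SP{3}$, both are bounded above by a quadratic of the form $\ts{\frac{r}{2}}|\cdot|^2+c$, while the $-\ts{\frac{\gamma^2}{2}}|w|^2$ term penalizes large $w$; provided $\gamma$ is chosen large enough (as in Theorem \ref{thm:Sk-invariant}) that the inner supremum is finite, the $\max$/$\sup$ interchange is justified unconditionally on $\R^-$-valued functions. This is the only technical concern, and it is already handled by the growth assumptions underpinning $\SP{3}$, so I do not expect it to pose an obstacle beyond a brief remark.
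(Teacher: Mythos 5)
Your proposal is correct and follows essentially the same route as the paper's own argument: induction on $k$, with the base case established by commuting the pointwise maximum with the supremum over $w$ in the definition \er{eq:op-DPP-1}, and the inductive step obtained from the semigroup recursion \er{eq:op-DPP-k}. The added remarks on invariance of $\SP{i}$ and finiteness of the suprema are harmless refinements; note that the $\sup$/$\max$ interchange holds unconditionally for $\R^-$-valued functions, so the finiteness caveat is not actually needed for the identity itself.
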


\if{false}

\begin{proof}
Fix any $a\in\R^-$, $\phi,\,\theta\in\SP{i}$, $i\in\{1,2,3\}$, $x\in\R^n$, by inspection of \er{eq:op-DPP-1}, \begin{align}
	( \op{S}_1 & \, (a\otimes \phi \oplus \theta) )(x)
	= \sup_{w\in\R^m} \left\{\ts{\frac{1}{2}}\, x^T \, \Phi\, x - \ts{\frac{\gamma^2}{2}} \, |w|^2 + (a\otimes \phi \oplus \theta)(A\, x + B\ w) \right\}
	\nn\\
	& = \sup_{w\in\R^m} \left\{ \ts{\frac{1}{2}}\,x^T \, \Phi\, x - \ts{\frac{\gamma^2}{2}} \, |w|^2 + \max\left( a + \phi(A\, x + B\ w),\, \theta(A\, x + B\ w) \right) \right\}
	\\
	& = \sup_{w\in\R^m} \max \left( a + \ts{\frac{1}{2}}\,x^T \, \Phi\, x - \ts{\frac{\gamma^2}{2}} \, |w|^2 +  \phi(A\, x + B\ w),\,
						\ts{\frac{1}{2}}\,x^T \, \Phi\, x - \ts{\frac{\gamma^2}{2}} \, |w|^2 + \theta(A\, x + B\ w) \right)
	\nn\\
	& = \max\left( a + (\op{S}_1 \, \phi)(x),\, (\op{S}_1\, \theta)(x) \right) = a\otimes (\op{S}_1\, \phi)(x) \oplus  (\op{S}_1\, \theta)(x)\,.
\nn
\end{align}
That is, $\op{S}_1$ is max-plus linear. Suppose that $\op{S}_k$ is max-plus linear for $k\in\Z_{>0}$. Applying \er{eq:op-DPP-k},
\begin{align}
	\op{S}_{k+1}\, ( a\otimes\phi\oplus\theta )
	& =
	\op{S}_1 \, \op{S}_k ( a\otimes\phi\oplus\theta )= \op{S}_1 \, (a\otimes (\op{S}_k\, \phi) \oplus (\op{S}_k\, \theta) )
	\nn\\
	& = a\otimes (\op{S}_1\, \op{S}_k\, \phi) \oplus (\op{S}_1\, \op{S}_k\, \theta) = a\otimes (\op{S}_{k+1}\, \phi) \oplus (\op{S}_{k+1}\, \theta)\,,
	\nn
\end{align}
where the second equality is by hypothesis, and the third is by max-plus linearity of $\op{S}_1$. Hence, by induction, $\op{S}_k$ is max-plus linear for all $k\in\Z_{>0}$.
\end{proof}

\fi


\section{Max-plus fundamental solution and computational method}
\label{sec:fundamental}

\subsection{Max-plus fundamental solution semigroup}
\label{subsec:semigroup}

Where the terminal payoff $\Psi$ is non-quadratic, the value function $W_k$ \er{eq:value} may be computed via grid-based dynamic programming iterations \er{eq:DPP} for $ k\in\Z_{\ge0}$ \cite{KD:92}. However, this method is computationally expensive for problems with higher state dimensions, due to the exponential increase in grid points required to represent the state space. This is the well-known curse-of-dimensionality \cite{M:06}. By exploiting the max-plus linearity of the operator $\op{S}_k, k\in\Z_{\ge0}$, a more efficient computational method can be developed. This method employs an analogous max-plus fundamental solution to that developed in \cite{M:08}. To this end, define a set of auxiliary value functions $\mathrm{S}_{k,i}:\R^n\times\R^n\rightarrow\R^-, k\in\Z_{\ge0}, i\in\{1,2,3\}$, by
\begin{align}
\label{eq:kernel-primal}
\mathrm{S}_{k,i}(x,z)\doteq\left(\op{S}_k\psi^i(\cdot,z)\right)(x), \hspace{.5cm} \forall~ (x,z)\in\R^n\times\R^n.
\end{align}
Applying the definition of $\op{D}_{\psi^i}$ and $\op{D}_{\psi^i}^{-1}$ in \er{eq:op-dual} and \er{eq:op-inv-dual}, and the max-plus linearity of $\op{S}_k, k\in\Z_{\ge0}$, from Lemma \ref{lem:linear} yields
\begin{align}
\label{eq:fund-solu}
W_k(x)&=\left(\op{S}_k\Psi\right)(x)=\left(\op{S}_k\int_{\R^n}^\oplus\psi^i(\cdot,z)\otimes(\op{D}_{\psi^i}\Psi)(z)\,dz\right)(x)
\\\nn
&=\int_{\R^n}^\oplus\left(\op{S}_k\psi^i(\cdot,z)\right)(x)\otimes(\op{D}_{\psi^i}\Psi)(z)\,dz=\int_{\R^n}^\oplus\mathrm{S}_{k,i}(x,z)\otimes(\op{D}_{\psi^i}\Psi)(z)\,dz.
\end{align}
Hence, the value function $W_k$ can be computed by performing a max-plus integration of the max-plus product of $\mathrm{S}_{k,i}$ of \er{eq:kernel-primal} and the dual of the terminal payoff $\op{D}_{\psi^i}\Psi$. The function $\mathrm{S}_{k,i}$ of \er{eq:kernel-primal} is independent of the terminal payoff $\Psi$. When $\mathrm{S}_{k,i}$ is computed, it can be used to compute any value function $W_k$ corresponding to an arbitrary terminal payoff $\Psi$ via \er{eq:fund-solu}. From \er{eq:kernel-primal}, the function $\mathrm{S}_{k,i}$ is obtained by applying the dynamic programming evolution operator $\op{S}_k$ of \er{eq:op-DPP-k} to the functions $\psi^i$ of \er{eq:psi-all}. As a consequence of the linear dynamics \er{eq:system}, quadratic running payoff in \er{eq:payoff} and the quadratic basis function $\psi^i\in\SP{i}$ used as the terminal payoff, the function $\mathrm{S}_{k,i}$ is the value function of an LQR problem  \cite{AM:89}. Hence it is quadratic of the form
\begin{align}
\mathrm{S}_{k,i}(x,z)=\frac{1}{2}\left[\ba{c}x \\ z \ea\right]^TQ_{k,i} \left[\ba{c}x \\ z \ea\right]=\frac{1}{2}\left[\ba{c}x \\ z \ea\right]^T  \left[\ba{cc}Q_{k,i}^{11}&Q_{k,i}^{12}\\Q_{k,i}^{21}&Q_{k,i}^{22}\ea\right]   \left[\ba{c}x \\ z \ea\right],
\label{eq:quad-fund-primal}
\end{align}
where $Q_{k,i}\in(\R^-)^{2n\times 2n}$. An iterative representation for the Hessian follows by dynamic programming. These iterations can be written down independently of the initial conditions $Q_{1,i}, i\in\{1,2,3\}$. These initial conditions are derived separately in Section \ref{sec:ini}.
\begin{theorem}
\label{thm:dynamics-Q}
The Hessian $Q_{k,i}\, k\in\Z_{\ge0}, i\in\{1,2,3\}$ of $\mathrm{S}_{k,i}$ in \er{eq:quad-fund-primal} satisfy
\begin{align}
\nn
Q_{k+1,i}^{11}&=\Phi+A^TQ_{k,i}^{11}A+A^TQ_{k,i}^{11}B(\gamma^2I-B^TQ_{k,i}^{11}B)^{-1}B^TQ_{k,i}^{11}A,
\\\nn
Q_{k+1,i}^{12}&=A^TQ_{k,i}^{12}+A^TQ_{k,i}^{11}B(\gamma^2I-B^TQ_{k,i}^{11}B)^{-1}B^TQ_{k,i}^{12},
\\\label{eq:dynamics-Q}
Q_{k+1,i}^{21}&=(Q_{k+1,i}^{12})^T,
\\\nn
Q_{k+1,i}^{22}&=Q_{k,i}^{22}+Q_{k,i}^{21}B(\gamma^2I-B^TQ_{k,i}^{11}B)^{-1}B^TQ_{k,i}^{12}.
\end{align}
\end{theorem}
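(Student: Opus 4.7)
The plan is to derive the recursion \er{eq:dynamics-Q} by applying the one-step evolution operator $\op{S}_1$ to the quadratic representation \er{eq:quad-fund-primal} of $\mathrm{S}_{k,i}$ and reading off the four block entries of the resulting Hessian. By the semigroup property noted in Remark \ref{rmk:semigroup-1} and the definition \er{eq:kernel-primal}, we have
\begin{align}
\mathrm{S}_{k+1,i}(x,z) = \left(\op{S}_{k+1}\, \psi^i(\cdot,z)\right)(x) = \left(\op{S}_1\, \mathrm{S}_{k,i}(\cdot,z)\right)(x)\nn
\end{align}
for each fixed $z\in\R^n$, so substituting \er{eq:quad-fund-primal} into \er{eq:op-DPP-1} gives
\begin{align}
\mathrm{S}_{k+1,i}(x,z) = \sup_{w\in\R^m} \Bigl\{ \ts{\frac{1}{2}}\, x^T\, \Phi\, x - \ts{\frac{\gamma^2}{2}}\,|w|^2 + \demi\left[\ba{c} Ax+Bw \\ z\ea\right]^T Q_{k,i} \left[\ba{c} Ax+Bw \\ z\ea\right] \Bigr\}.\nn
\end{align}

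Next I would carry out the standard pointwise maximization in $w$. Expanding the quadratic in $w$ yields an objective of the form $-\demi\, w^T(\gamma^2 I - B^T Q_{k,i}^{11} B)\, w + w^T\, B^T( Q_{k,i}^{11} A\, x + Q_{k,i}^{12}\, z) + (\text{terms independent of }w)$. Under the implicit assumption that $\gamma^2 I - B^T Q_{k,i}^{11} B$ is positive definite (which is consistent with Assumption \ref{ass:existence} and the quadratic-bound setting inherited from Theorem \ref{thm:Sk-invariant}), the maximizer is
\begin{align}
w^\star(x,z) = \bigl(\gamma^2 I - B^T Q_{k,i}^{11} B\bigr)^{-1} B^T\bigl(Q_{k,i}^{11} A\, x + Q_{k,i}^{12}\, z\bigr),\nn
\end{align}
and substituting $w^\star$ back produces a quadratic form in $(x,z)$. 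I would then collect the coefficients of $xx^T$, $xz^T$, $zx^T$ and $zz^T$.

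The coefficient of $\demi\, xx^T$ gives $\Phi + A^T Q_{k,i}^{11} A + A^T Q_{k,i}^{11} B(\gamma^2 I - B^T Q_{k,i}^{11} B)^{-1} B^T Q_{k,i}^{11} A$, matching the first equation of \er{eq:dynamics-Q}; the coefficient of $xz^T$ yields the $Q_{k+1,i}^{12}$ equation after using $Q_{k,i}^{21}=(Q_{k,i}^{12})^T$ and collecting terms; the symmetry $Q_{k+1,i}^{21}=(Q_{k+1,i}^{12})^T$ is immediate from the symmetry of the whole Hessian; and the coefficient of $\demi\, zz^T$ gives the $Q_{k+1,i}^{22}$ equation. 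The main obstacle is bookkeeping: ensuring that the cross-terms produced by $w^\star$ are correctly apportioned among the $(1,1)$, $(1,2)$, $(2,1)$, $(2,2)$ blocks so that the four stated recursions emerge simultaneously, and justifying the invertibility of $\gamma^2 I - B^T Q_{k,i}^{11} B$ inductively (which holds on the $\SP{i}$ classes in light of Assumption \ref{ass:existence} and Theorem \ref{thm:Sk-invariant}). No existence/convergence argument is needed beyond induction on $k$, since the identity is purely algebraic once the supremum is resolved.
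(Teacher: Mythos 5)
Your proposal is correct and follows essentially the same route as the paper's proof: apply $\op{S}_1$ to the quadratic form \er{eq:quad-fund-primal}, maximize over $w$ by completion of squares using the maximizer $w^\star=(\gamma^2 I-B^TQ_{k,i}^{11}B)^{-1}B^T(Q_{k,i}^{11}Ax+Q_{k,i}^{12}z)$, and read off the four blocks of the resulting Hessian. (Your expression for $w^\star$ is in fact the corrected version of the one printed in the paper, which contains a typographical $A^T x$ in place of $Ax$.)
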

\begin{proof}
Applying the quadratic form \er{eq:quad-fund-primal} of $\mathrm{S}_{k,i}$ along with the definitions of the operators $\op{S}_1$ and $\op{S}_k$ in \er{eq:op-DPP-1} and \er{eq:op-DPP-k} respectively yields
\begin{align}
\nn
\mathrm{S}_{k+1,i}(x,z)&=\frac{1}{2}\left[\ba{c}x \\ z \ea\right]^TQ_{k+1,i} \left[\ba{c}x \\ z \ea\right]=\left(\op{S}_{k+1}\psi^i(\cdot,z)\right)(x)
\\\nn
                        &=\left(\op{S}_1\op{S}_k\psi^i(\cdot,z)\right)(x)=\left(\op{S}_1\left(\frac{1}{2}\left[\ba{c}\cdot \\ z \ea\right]^TQ_{k,i} \left[\ba{c}\cdot \\ z \ea\right]\right)\right)(x)
\\\nn
                        &=\sup_{w\in\R^m}\left\{{\frac{1}{2}}\,x^T\Phi x-\frac{1}{2}\gamma^2\,|w|^2+\frac{1}{2}\left[\ba{c}Ax+Bw \\ z \ea\right]^TQ_{k,i} \left[\ba{c}Ax+Bw \\ z \ea\right]\right\}.
\end{align}
The argument of the supremum on the right-hand side is quadratic in $w$, and consequently, the maximisation can be performed analytically by completion of squares. In particular, the supremum is achieved by $w^\ast=(\gamma^2\,I-B^TQ_{k,i}^{11}B)^{-1}(B^TQ_{k,i}^{11}A^Tx+B^TQ_{k,i}^{12}z)$.  Iteration \er{eq:dynamics-Q} follows by explicitly evaluating the supremum using $w^\ast$.
\end{proof}

For each $i\in\{1,2,3\}$, the functions $\mathrm{S}_{k,i},k\in\Z_{\ge0}$ can be propagated forward to $\mathrm{S}_{k+1,i}$ via the iteration \er{eq:dynamics-Q}. As shown in the continuous time \cite{M:08} and infinite dimensional cases \cite{DM1:11}, \cite{DM2:12}, \cite{DM1:12}, it is more efficient to compute $\mathrm{S}_{k,i},k\in\Z_{\ge0}$, for longer time horizons via their max-plus duals, as a specific time horizon doubling technique can be developed. To this end, let $\mathrm{B}_{k,i}(\cdot,z):\R^n\rightarrow\R^-, z\in\R^n$, denote the max-plus dual of $\mathrm{S}_{k,i}(\cdot,z):\R^n\rightarrow\R^-, z\in\R^n$, with respect to $\psi^i\in\SP{i}, i\in\{1,2,3\}$, so that by \er{eq:op-dual}
\begin{align}
\label{eq:dual-fund-2}
\mathrm{B}_{k,i}(y,z)\doteq\left(\op{D}_{\psi^i}\mathrm{S}_{k,i}(\cdot,z)\right)(y)=-\int_{\R^n}^\oplus\psi^i(x,y)\otimes(-\mathrm{S}_{k,i}(x,z))\,dx.
\end{align}
The function $\mathrm{S}_{k,i}$ is recovered from $\mathrm{B}_{k,i}$ via the inverse dual operator $\op{D}_{\psi^i}^{-1}$ of \er{eq:op-inv-dual}, with
\begin{align}
\label{eq:dual-fund-1}
\mathrm{S}_{k,i}(x,z)=\left(\op{D}^{-1}_{\psi^i}\mathrm{B}_{k,i}(\cdot,z)\right)(x)=\int_{\R^n}^\oplus\psi^i(x,y)\otimes\mathrm{B}_{k,i}(y,z)\,dy.
\end{align}
The functions $\mathrm{B}_{k,i}, k\in\Z_{\ge0}$ of \er{eq:dual-fund-2} can be interpreted as kernels in defining max-plus integral operators $\op{B}_{k,i}, k\in\Z_{\ge0}$, on spaces $\SP{i}, i\in\{1,2,3\}$, via
\begin{align}
\label{eq:mp-semigroup}
\left(\op{B}_{k,i}{a}\right)(y)\doteq\int_{\R^n}^{\oplus}\mathrm{B}_{k,i}(y,z)\otimes a(z)\, dz.
\end{align}
\begin{remark}
\label{rmk:B0}
Fix any $i\in\{1,2,3\}$, $\mathrm{S}_{0,i}=\psi^i$  from the definition \er{eq:kernel-primal} of $\mathrm{S}_{k,i}$. Hence $\mathrm{B}_{0,i}(y,z)=\delta(y-z)$ from \er{eq:dual-fund-2} and subsequently, $\op{B}_{0,i}=\op{I}$ according to \er{eq:mp-semigroup}.
\end{remark}

The operators $\op{B}_{k,i}$ are closely related to the operators $\op{S}_{k,i}$ from \er{eq:op-DPP-1} and \er{eq:op-DPP-k} via  $\op{D}_{\psi^i}$ of \er{eq:op-dual} and $\op{D}_{\psi^i}^{-1}$ of \er{eq:op-inv-dual}.
\begin{theorem}
\label{thm:operator-duality}
For any $k\in\Z_{\ge0}$ and $i\in\{1,2,3\}$,
\begin{align}
\label{eq:operator-duality}
\op{S}_k=\op{D}_{\psi^i}^{-1}\,\op{B}_{k,i}\,\op{D}_{\psi^i}.
\end{align}
\end{theorem}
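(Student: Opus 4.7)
The plan is to prove the identity $\op{S}_k = \op{D}_{\psi^i}^{-1}\,\op{B}_{k,i}\,\op{D}_{\psi^i}$ by a direct chain-of-definitions calculation, applying both sides to an arbitrary $\phi\in\SP{i}$ at an arbitrary $x\in\R^n$. First I would use the primal-dual reconstruction $\phi = \op{D}_{\psi^i}^{-1}\op{D}_{\psi^i}\phi$, which by \er{eq:op-inv-dual} gives the representation $\phi(\cdot) = \int_{\R^n}^{\oplus}\psi^i(\cdot,z)\otimes (\op{D}_{\psi^i}\phi)(z)\,dz$. This is well posed under Assumption \ref{ass:existence}, which is exactly what guarantees that the dual and inverse dual pair up properly on $\SP{i}$ (this is where I would cite Remark \ref{Rmk:exis} / the standing assumptions).

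Next I would push $\op{S}_k$ inside the max-plus integral. This step is the crux: it is the continuous (integral) version of the finite max-plus linearity statement in Lemma \ref{lem:linear}. I would argue this as an immediate consequence of the supremum structure of $\op{S}_k$ in \er{eq:op-DPP-1}--\er{eq:op-DPP-k}, since for any family $\{f_z\}\subset\SP{i}$ and any $a:\R^n\to\R^-$, interchanging $\op{S}_k$ with $\int^{\oplus}$ reduces to interchanging two suprema, which is always justified in the max-plus setting provided the relevant quantities lie in $\R^-$. Combining this interchange with the defining equation \er{eq:kernel-primal} gives
\begin{align}
(\op{S}_k\phi)(x) &= \int_{\R^n}^{\oplus}(\op{S}_k\psi^i(\cdot,z))(x)\otimes (\op{D}_{\psi^i}\phi)(z)\,dz
= \int_{\R^n}^{\oplus}\mathrm{S}_{k,i}(x,z)\otimes (\op{D}_{\psi^i}\phi)(z)\,dz,
\nn
\end{align}
which is exactly \er{eq:fund-solu}.

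Then I would substitute the inverse-dual representation of $\mathrm{S}_{k,i}(x,z)$ from \er{eq:dual-fund-1} and again interchange the two max-plus integrals (a max-plus Fubini, which is trivial because it is merely the commutativity of two suprema). This produces
\begin{align}
(\op{S}_k\phi)(x)
&= \int_{\R^n}^{\oplus}\!\int_{\R^n}^{\oplus}\psi^i(x,y)\otimes \mathrm{B}_{k,i}(y,z)\otimes (\op{D}_{\psi^i}\phi)(z)\,dz\,dy
\nn\\
&= \int_{\R^n}^{\oplus}\psi^i(x,y)\otimes\left(\int_{\R^n}^{\oplus}\mathrm{B}_{k,i}(y,z)\otimes(\op{D}_{\psi^i}\phi)(z)\,dz\right)dy,
\nn
\end{align}
and the inner integral is precisely $(\op{B}_{k,i}\op{D}_{\psi^i}\phi)(y)$ by \er{eq:mp-semigroup}, while the outer integral is the inverse-dual operator $\op{D}_{\psi^i}^{-1}$ applied to that function by \er{eq:op-inv-dual}. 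This yields $(\op{S}_k\phi)(x)=(\op{D}_{\psi^i}^{-1}\op{B}_{k,i}\op{D}_{\psi^i}\phi)(x)$, giving the claim for all $k\in\Z_{>0}$. The case $k=0$ is immediate from $\op{S}_0=\op{I}$ and $\op{B}_{0,i}=\op{I}$ (Remark \ref{rmk:B0}), combined with $\op{D}_{\psi^i}^{-1}\op{D}_{\psi^i}=\op{I}$ on $\SP{i}$.

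The main obstacle, and the only substantive point, is justifying the two interchanges of $\op{S}_k$ with the max-plus integral (equivalently, of suprema). In the conventional linear setting this would require dominated convergence or a Fubini theorem, but in the max-plus setting it is simply commutativity of suprema over $\R^-$, provided all quantities involved remain in $\R^-$; this is ensured by the invariance Theorem \ref{thm:Sk-invariant} (so $\op{S}_k\psi^i(\cdot,z)\in\SP{i}$) together with Assumption \ref{ass:existence} (so the duals $\op{D}_{\psi^i}\phi$ are well defined). Everything else is pure algebra built out of \er{eq:op-dual}, \er{eq:op-inv-dual}, \er{eq:kernel-primal}, \er{eq:dual-fund-1}, and \er{eq:mp-semigroup}.
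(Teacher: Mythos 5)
Your proposal is correct and follows essentially the same route as the paper: the paper's proof likewise starts from the representation \er{eq:fund-solu} (which it establishes just before the theorem by the same "push $\op{S}_k$ through the max-plus integral" argument you give), substitutes \er{eq:dual-fund-1} for $\mathrm{S}_{k,i}$, swaps the two max-plus integrals, and reads off $\op{B}_{k,i}$ and $\op{D}_{\psi^i}^{-1}$. Your explicit treatment of the $k=0$ case and of the supremum interchanges is slightly more careful than the paper's, but the argument is the same.
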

\begin{proof}
Fix any $\phi\in\SP{i}$, $i\in\{1,2,3\}$, $k\in\Z_{>0}$ and  $x\in\R^n$. Applying \er{eq:fund-solu}, the definition\er{eq:dual-fund-2}  of $\mathrm{B}_{k,i}$,   and the duality operators $\op{D}_{\psi^i}$ and $\op{D}_{\psi^i}^{-1}$ of \er{eq:op-dual} and \er{eq:op-inv-dual},
\begin{align}
\nn
\left(\op{S}_k\phi\right)(x)&=\int_{\R^n}^{\oplus}\left(\op{D}_{\psi^i}\phi\right)(z)\otimes\mathrm{S}_{k,i}(x,z)\,dz=\int_{\R^n}^{\oplus}\left(\op{D}_{\psi^i}\phi\right)(z)\otimes\left(\op{D}_{\psi^i}^{-1}\mathrm{B}_{k,i}(\cdot,z)\right)(x)\,dz
\\\nn
      &=\int_{\R^n}^{\oplus}\left(\op{D}_{\psi^i}\phi\right)(z)\otimes\left(\int_{\R^n}^\oplus\psi^i(x,y)\otimes\mathrm{B}_{k,i}(y,z)\,dy\right)\,\,dz
\\\nn
      &=\int_{\R^n}^{\oplus}\psi^i(x,y)\otimes\left(\int_{\R^n}^{\oplus}\mathrm{B}_{k,i}(y,z)\otimes\left(\op{D}_{\psi^i}\phi\right)(z)\,dz\,\right)\,dy
\\\nn
      &=\int_{\R^n}^{\oplus}\psi^i(x,y)\otimes \left(\op{B}_{k,i}\op{D}_{\psi^i}\phi\right)(y)\,dy=\left(\op{D}_{\psi^i}^{-1}\,\op{B}_{k,i}\,\op{D}_{\psi^i}\phi\right)(x).
\end{align}
\end{proof}
\begin{remark}
\label{rmk:semigroup-2}
From Remark \ref{rmk:semigroup-1}, $\{\op{S}_k, k\in\Z_{\ge0}\}$ is a semigroup. Theorem \ref{thm:operator-duality}  implies that for any $k_1,k_2\in\Z_{\ge0}$
$
\op{B}_{k_1+k_2,i}=\op{D}_{\psi^i}\op{S}_{k_1+k_2}\op{D}^{-1}_{\psi^i}=\op{D}_{\psi^i}\op{S}_{k_1}\op{S}_{k_2}\op{D}^{-1}_{\psi^i}=\op{D}_{\psi^i}\op{S}_{k_1}\op{D}^{-1}_{\psi^i}\op{D}_{\psi^i}\op{S}_{k_2}\op{D}^{-1}_{\psi^i}=\op{B}_{k_1,i}\op{B}_{k_2,i}.
$
It is also shown in Remark \ref{rmk:B0} that $\op{B}_{0,i}=\op{I}$. Thus, the operators $\{\op{B}_{k,i}, k\in\Z_{\ge0}\}$ form a semigroup.
\end{remark}

The value functions $W_k, k\in\Z_{\ge0}$, of \er{eq:value} are propagated by the semigroup $\{\op{S}_k,k\in\Z_{\ge0}\}$ via \er{eq:DPP}, or equivalently, $W_k=\op{S}_k{W_0}$. From Theorem \ref{thm:operator-duality},
$
W_k=\op{D}_{\psi^i}^{-1}\,\op{B}_{k,i}\,\op{D}_{\psi^i}{W_0},
$
which can be equivalently expressed as $\op{D}_{\psi^i}W_k=\op{B}_{k,i}\,(\op{D}_{\psi^i}{W_0}), k\in\Z_{>0}$. Thus, the semigroup $\{\op{B}_{k,i}, k\in\Z_{\ge0}\}$ propagates the max-plus dual of the value functions $\op{D}_{\psi^i}W_k$. Consequently, there are two paths obtaining the value function $W_k$ from the initial condition (terminal payoff) $W_0=\Psi$ as shown in panel (a) of Figure \ref{fig:propagations}.

\begin{figure}[h]
\begin{center}
\begin{equation}
\ba{cc}
\begin{CD}
		\hspace{1cm}\fundterm 	@>\text{via}~\op{S}_{k}~\text{of \er{eq:op-DPP-k}}>>	\op{S}_{k}\, \fundterm &\hspace{4cm}  Q_{1,i}	 @>\text{via \er{eq:dynamics-Q}}>>  Q_{k,i}
\\
		\hspace{1cm}@VV\op{D}_{\psi^i}V		@AA\op{D}_{\psi^i}^{-1}A   & \hspace{-4.5cm} @VV\Gamma^iV		 \hspace{-0.5cm}@AA\Gamma^i A
\\
		\hspace{1cm}\op{D}_{\psi^i}\, \fundterm	 @>\text{via}~\opBmp{k}~\text{of \er{eq:mp-semigroup}}>>	\opBmp{k}\, \op{D}_{\psi^i}\, \fundterm &\hspace{4cm}  \Theta_{1,i}	 @>\text{via \er{eq:B-update}}>>	 \Theta_{k,i}
	\end{CD}
\\
\hspace{-5.5cm}\ba{c}\text{\footnotesize (a): Propagation of  $W_k$ via $\op{S}_k$ of \er{eq:op-DPP-k} or via $\op{B}_{k,i}$ of \er{eq:mp-semigroup}.}\ea &\hspace{-5cm} \ba{c}\text{\footnotesize (b): Propagation of $Q_{k,i}$ of \er{eq:quad-fund-primal} and $\Theta_{k,i}$ of \er{eq:quad-fund-dual}.}\ea 	
\nn
\ea
\end{equation}
\end{center}
\caption{Propagation of value functions via two semigroups and propagation of matrices $Q_{k,i}, \Theta_{k,i}$.}
\label{fig:propagations}
\end{figure}

\subsection{Propagation of the fundamental solution semigroup kernels}
\label{subsec:kernel}

The propagation of the fundamental solution semigroup $\{\op{B}_{k,i}, k\in\Z_{\ge0}\}$ can be represented by the evolution of its kernel functions $\mathrm{B}_{k,i}, k\in\Z_{\ge0}, i\in\{1,2,3\}$ of \er{eq:dual-fund-2}.
\begin{theorem}
\label{thm:expon}
For $(y,z)\in\R^n\times\R^n, k_1,k_2\in\Z_{\ge0}$
\begin{align}
\label{eq:B2k}
\mathrm{B}_{k_1+k_2,i}(y,z)=\int_{\R^n}^\oplus\mathrm{B}_{k_1,i}(y,\rho)\otimes\mathrm{B}_{k_2,i}(\rho,z)\,d\rho.
\end{align}
\end{theorem}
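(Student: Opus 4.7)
The identity to prove is simply the kernel-level statement of the operator semigroup property $\op{B}_{k_1+k_2,i} = \op{B}_{k_1,i}\, \op{B}_{k_2,i}$ already established in Remark \ref{rmk:semigroup-2}. The plan is therefore to unfold the definitions of the kernel $\mathrm{B}_{k,i}$ in terms of the primal kernel $\mathrm{S}_{k,i}$, invoke the semigroup property of $\{\op{S}_k\}$ coming from \er{eq:op-DPP-k} (Remark \ref{rmk:semigroup-1}), and translate back to the dual side via the intertwining identity of Theorem \ref{thm:operator-duality}.

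Concretely, I would proceed as follows. First, fix $i\in\{1,2,3\}$, $k_1,k_2\in\Z_{\ge 0}$, and $(y,z)\in\R^n\times\R^n$, and start from the identity
\begin{align*}
\mathrm{S}_{k_1+k_2,i}(\cdot,z) = \op{S}_{k_1+k_2}\, \psi^i(\cdot,z) = \op{S}_{k_1}\, \op{S}_{k_2}\, \psi^i(\cdot,z) = \op{S}_{k_1}\, \mathrm{S}_{k_2,i}(\cdot,z),
\end{align*}
which is immediate from \er{eq:kernel-primal} and the semigroup property \er{eq:op-DPP-k}. Applying the max-plus dual $\op{D}_{\psi^i}$ to both sides and using the definition \er{eq:dual-fund-2} of $\mathrm{B}_{k_1+k_2,i}$, together with Theorem \ref{thm:operator-duality} in the form $\op{D}_{\psi^i}\, \op{S}_{k_1} = \op{B}_{k_1,i}\, \op{D}_{\psi^i}$, gives
\begin{align*}
\mathrm{B}_{k_1+k_2,i}(y,z) = \bigl(\op{D}_{\psi^i}\, \op{S}_{k_1}\, \mathrm{S}_{k_2,i}(\cdot,z)\bigr)(y) = \bigl(\op{B}_{k_1,i}\, \op{D}_{\psi^i}\, \mathrm{S}_{k_2,i}(\cdot,z)\bigr)(y) = \bigl(\op{B}_{k_1,i}\, \mathrm{B}_{k_2,i}(\cdot,z)\bigr)(y).
\end{align*}
Finally, I would invoke the integral representation \er{eq:mp-semigroup} of $\op{B}_{k_1,i}$ to recognize the right-hand side as the max-plus integral $\int_{\R^n}^\oplus \mathrm{B}_{k_1,i}(y,\rho)\otimes \mathrm{B}_{k_2,i}(\rho,z)\, d\rho$, which is precisely \er{eq:B2k}. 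The boundary cases $k_1=0$ or $k_2=0$ are handled by Remark \ref{rmk:B0}, since $\mathrm{B}_{0,i}(\cdot,\cdot)=\delta(\cdot-\cdot)$ acts as the max-plus identity kernel under the integral.

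There is no substantive obstacle here: the argument is an essentially mechanical chain of substitutions, with Theorem \ref{thm:operator-duality} doing all the real work. The only point that warrants care is ensuring that the function $z\mapsto \mathrm{S}_{k_2,i}(\cdot,z)$ lies in a class on which the duality identity $\op{D}_{\psi^i}\op{S}_{k_1} = \op{B}_{k_1,i}\op{D}_{\psi^i}$ applies; this is guaranteed because $\mathrm{S}_{k_2,i}(\cdot,z)\in\SP{i}$ by the invariance provided by Theorem \ref{thm:Sk-invariant} (with $\psi^i(\cdot,z)\in\SP{i}$ by inspection of \er{eq:psi-all}). Once that membership is noted, the stated identity follows directly.
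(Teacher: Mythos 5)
Your argument is correct and follows exactly the route the paper intends: the paper omits the proof of Theorem \ref{thm:expon} (deferring to \cite{M:08}), but its Remark \ref{rmk:semigroup-2} derives the operator-level identity $\op{B}_{k_1+k_2,i}=\op{B}_{k_1,i}\op{B}_{k_2,i}$ by precisely your chain --- the semigroup property of $\{\op{S}_k\}$ combined with Theorem \ref{thm:operator-duality} --- and your proof is just the kernel-level unfolding of that via \er{eq:dual-fund-2} and \er{eq:mp-semigroup}. The only implicit step, $\op{D}_{\psi^i}\op{D}_{\psi^i}^{-1}=\op{I}$ on the relevant function class, is used equally freely by the paper itself, so nothing is missing.
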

This iteration does not depend on the choice of max-plus vector space $\SP{i}$. It has the same form as in the continuous time \cite{M:08} and infinite dimensional cases \cite{DM1:11},  \cite{DM2:12}, \cite{DM1:12}. The proof of Theorem \ref{thm:expon} follows as per \cite{M:08} and is omitted for brevity.

According to Theorem \ref{thm:dynamics-Q}, as $\mathrm{S}_{k,i}$ takes quadratic form with \er{eq:quad-fund-primal}, it can be shown that the kernel $\mathrm{B}_{k,i}$ of \er{eq:dual-fund-2} is also with that quadratic form, with
\begin{align}
\label{eq:quad-fund-dual}
\mathrm{B}_{k,i}(y,z)=-\frac{1}{2}\left[\ba{c}y \\ z \ea\right]^T \Theta_{k,i}\left[\ba{c}y \\z\ea\right]=-\frac{1}{2}\left[\ba{c}y \\ z \ea\right]^T \left[\ba{cc}\Theta_{k,i}^{11}&\Theta_{k,i}^{12}\\\Theta_{k,i}^{21}&\Theta_{k,i}^{22}\ea\right]
\left[\ba{c}y \\ z \ea\right].
\end{align}
Hence, iterations \er{eq:B2k} are reduced to iterations on the matrices $\Theta_{k,i}, k\in\Z_{\ge0}$. These iterations are specified by a matrix operation  $\Omega_1\circledast \Omega_2$ defined by
\begin{align}
\label{eq:circledast}
\Omega_1\circledast \Omega_2\doteq\left[\ba{cc}\Omega_1^{11} &0\\0&\Omega_2^{22} \ea\right]-\left[\ba{c}\Omega_1^{12}\\ \Omega_2^{21} \ea\right](\Omega_1^{22}+\Omega_2^{11})^{-1}[\Omega_1^{21}~~\Omega_2^{12}].
\end{align}
Here, $\Omega_j\in\R^{2n\times 2n}, \Omega_j=\Omega_j^T, j=1,2 $, satisfy $\Omega_1^{22}+\Omega_2^{11}>0$.
\begin{theorem}\label{thm:exponential}
Suppose that $\mathrm{B}_{k_j}^i$ for $j=1,2$ and $i\in\{1,2,3\}$ are quadratic of the form \er{eq:quad-fund-dual} with $\Theta_{k_j,i}$.
Then, $\mathrm{B}_{k_1+k_2,i}$ is quadratic of the form \er{eq:quad-fund-dual} with $\Theta_{k_1+k_2,i}$ given by
\begin{align}
\label{eq:B-update}
\Theta_{k_1+k_2,i}=\Theta_{k_1,i}\circledast\Theta_{k_2,i}.
\end{align}
\end{theorem}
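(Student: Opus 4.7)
The plan is to begin from Theorem~\ref{thm:expon}, which already gives the kernel $\mathrm{B}_{k_1+k_2,i}$ as a max-plus convolution
\[
    \mathrm{B}_{k_1+k_2,i}(y,z) = \int_{\R^n}^\oplus \mathrm{B}_{k_1,i}(y,\rho) \otimes \mathrm{B}_{k_2,i}(\rho,z)\, d\rho,
\]
and then to reduce the right-hand side to closed form by substituting the quadratic expressions \er{eq:quad-fund-dual} for the two kernels. Since $\otimes$ is ordinary addition and $\int^\oplus$ is supremum, the integrand becomes a quadratic form in $(y^T,\rho^T,z^T)^T$, and the max-plus integral collapses to a finite-dimensional supremum over $\rho\in\R^n$.

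Next I would regroup this quadratic as a quadratic in $\rho$ with parameters $(y,z)$: the quadratic coefficient is $-\tfrac{1}{2}\bigl(\Theta_{k_1,i}^{22}+\Theta_{k_2,i}^{11}\bigr)$ and the linear coefficient is $-\bigl(\Theta_{k_1,i}^{21}y+\Theta_{k_2,i}^{12}z\bigr)^T$. The standing hypothesis $\Omega_1^{22}+\Omega_2^{11}>0$ guarantees strict concavity in $\rho$, so the sup is attained at the unique maximizer
\[
    \rho^\ast = -\bigl(\Theta_{k_1,i}^{22}+\Theta_{k_2,i}^{11}\bigr)^{-1}\bigl(\Theta_{k_1,i}^{21}\,y+\Theta_{k_2,i}^{12}\,z\bigr).
\]
Substituting $\rho^\ast$ and simplifying yields a quadratic form in $(y,z)$ of the type \er{eq:quad-fund-dual}, thereby establishing that $\mathrm{B}_{k_1+k_2,i}$ inherits the quadratic structure.

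Finally, I would identify the block entries of the resulting Hessian with the definition \er{eq:circledast}: the diagonal blocks pick up $\Theta_{k_1,i}^{11}$ and $\Theta_{k_2,i}^{22}$ directly, while the Schur-complement term arising from substituting $\rho^\ast$ matches the outer-product correction $\bigl[\Theta_{k_1,i}^{12};\ \Theta_{k_2,i}^{21}\bigr]\bigl(\Theta_{k_1,i}^{22}+\Theta_{k_2,i}^{11}\bigr)^{-1}\bigl[\Theta_{k_1,i}^{21}\ \Theta_{k_2,i}^{12}\bigr]$ with the correct sign thanks to the leading $-\tfrac{1}{2}$ in \er{eq:quad-fund-dual}. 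Reading off the blocks gives $\Theta_{k_1+k_2,i}=\Theta_{k_1,i}\circledast\Theta_{k_2,i}$ as required.

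The proof is essentially a completion-of-squares computation, so the only genuine obstacle is careful block bookkeeping: tracking the sign flip introduced by the outer $-\tfrac{1}{2}$ and ensuring that the cross-blocks $\Theta_{k_1,i}^{12}$ and $\Theta_{k_2,i}^{21}$ (rather than their transposes) appear in the correct column arrangement. Symmetry of $\Theta_{k_j,i}$ ensures the resulting Hessian is symmetric, and the invertibility required by \er{eq:circledast} is exactly the hypothesis invoked to complete the square.
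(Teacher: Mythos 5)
Your proposal is correct and follows exactly the route the paper intends: the paper omits the proof (deferring to the continuous-time argument in its reference for the $i=2$ case), but that argument is precisely the one you give --- invoke the kernel convolution identity of Theorem \ref{thm:expon}, substitute the quadratic forms \er{eq:quad-fund-dual}, and complete the square in $\rho$ using the positivity of $\Theta_{k_1,i}^{22}+\Theta_{k_2,i}^{11}$ to read off the Schur-complement structure of $\circledast$ in \er{eq:circledast}. Your maximizer $\rho^\ast$ and the resulting block identification both check out, so no gaps remain.
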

Theorem \ref{thm:exponential} has the same form for all spaces $\SP{i}, i\in\{1,2,3\}$. The proof follows as per \cite{M:08}, where it is proved for the case where $i=2$. The proofs for the remaining cases follow similarly, and are omitted for brevity.

Equation \er{eq:B-update} implies that the evolution of kernels $\mathrm{B}_{k,i}$ need not involve every time index $k\in\Z_{\ge0}$. Indeed, any sequence of time indices may be employed, provided that each element of that sequence can be expressed as a sum of two prior (smaller) elements. Time index doubling is one obvious example. In that case, by generating a sequence  $(\mathrm{B}_{1,i}, \mathrm{B}_{2^1,i}, \mathrm{B}_{2^2,i}, \cdots, \mathrm{B}_{2^l,i})$ for $l\in\Z_{>0}$ using equation \er{eq:B-update}, only $l$ matrix operations $\circledast$ are required to propagate $\Theta_{1,i}$ to $\Theta_{2^l,i}$. This is the key motivation behind  computing the auxiliary value functions $\mathrm{S}_{k,i}$ of \er{eq:kernel-primal} via the propagation of the kernels $\mathrm{B}_{k,i}$ of \er{eq:dual-fund-2}. However, in the computation of $\mathrm{S}_{k,i}$ via $\mathrm{B}_{k,i}$, two additional steps are required. Firstly, at the initial time $k=1$, it is necessary to compute the dual $\mathrm{B}_{1,i}(\cdot,z)=\op{D}_{\psi^i}\mathrm{S}_{1,i}(\cdot,z)$ of the initial auxiliary value function $\mathrm{S}_{1,i}$ according to \er{eq:dual-fund-2}. Secondly, at the final time $k$, the function $\mathrm{S}_{k,i}$ must be recovered via $\mathrm{S}_{k,i}=\op{D}^{-1}_{\psi^i}\mathrm{B}_{k,i}(\cdot,z)$ according to \er{eq:dual-fund-1}. It will be shown next that these maximization operations \er{eq:dual-fund-2} and \er{eq:dual-fund-1} are reduced to a matrix operation specified by $\Gamma^i:\R^{2n\times2n}\rightarrow\R^{2n\times2n}, i\in\{1,2,3\}$, where
\begin{align}
\label{eq:Gamma}
\nn
\Gamma^1(Q)&\doteq\left[\ba{cc} (Q^{11})^{-1} &-(Q^{11})^{-1}Q^{12}\\-Q^{21}(Q^{11})^{-1} &Q^{21}(Q^{11})^{-1}Q^{12}-Q^{22} \ea\right],
\\
\Gamma^2(Q)&\doteq\left[\ba{cc} M(Q^{11}+M)^{-1}M-M &-M(Q^{11}+M)^{-1}Q^{12}
\\ -Q^{21}(Q^{11}+M)^{-1}M &Q^{21}(Q^{11}+M)^{-1}Q^{12}-Q^{22} \ea\right],
\\\nn
\Gamma^3(Q)&\doteq-Q.
\end{align}
Here, the matrix $M$ in the definition of $\Gamma^2$ is the Hessian used to define the quadratic basis functions $\psi^2$ of \er{eq:psi-all} in space $\SPtwo$. It is required that $Q^{11}>0$ in the definition of $\Gamma^1$ and $Q^{11}+M>0$ in the definition of $\Gamma^2$ in order for the respective inverses to exist. It can be verified directly that
$
Q=\Gamma^i(\Gamma^i(Q))\doteq\Gamma^i\circ\Gamma^i(Q),
$ or $\Gamma^i\circ\Gamma^i=\op{I}$.
\begin{remark}
\label{Rmk:exis}
For $i=1,2$, by inspection of \er{eq:DRE} with \er{eq:dynamics-Q}, if $Q_{1,1}^{11}$ and $Q_{1,2}^{11}$ are as per \er{eq:Q11} and \er{eq:Q12}, respectively, then Assumption \ref{ass:existence} states that $Q_{k,1}^{11}$ is invertible and $Q^{11}_{k,2}  + M >0$ for all $k\in\Z_{>0}$. In that case, the matrix operations $\Gamma^i$ of \er{eq:Gamma} are well defined for all $Q_{k,i}$ $k\in\Z_{>0}$.
\end{remark}

\begin{theorem}
\label{thm:Q-Theta}
For any $k\in\Z_{>0}$, suppose that $\mathrm{S}_{k,i}$ of \er{eq:kernel-primal} and $\mathrm{B}_{k,i}$ of \er{eq:dual-fund-2} are quadratics of the form \er{eq:quad-fund-primal} and \er{eq:dual-fund-2}, respectively. Then, $Q_{k,i}$ and $\Theta_{k,i}$ are related via $\Gamma^i$ of \er{eq:Gamma} by
\begin{align}
\label{eq: Q-Theta}
\Theta_{k,i}=\Gamma^i (Q_{k,i}),~~~
Q_{k,i}=\Gamma^i(\Theta_{k,i}).
\end{align}
\end{theorem}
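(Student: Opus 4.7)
The plan is to verify the identity $\Theta_{k,i}=\Gamma^i(Q_{k,i})$ case by case for $i\in\{1,2,3\}$ by performing the maximisation in the definition \er{eq:dual-fund-2} of $\mathrm{B}_{k,i}$ explicitly, and then to obtain the reverse identity $Q_{k,i}=\Gamma^i(\Theta_{k,i})$ from the involution property $\Gamma^i\circ\Gamma^i=\op{I}$ already noted in the paragraph following \er{eq:Gamma}. The case $i=3$ is essentially immediate: since $\psi^3(x,y)=\delta(x-y)$, the supremum in \er{eq:dual-fund-2} is attained only at $x=y$, yielding $\mathrm{B}_{k,3}(y,z)=\mathrm{S}_{k,3}(y,z)$, and hence $\Theta_{k,3}=-Q_{k,3}=\Gamma^3(Q_{k,3})$.

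For $i=1$, I would substitute the quadratic form \er{eq:quad-fund-primal} and $\psi^1(x,y)=y^Tx$ into \er{eq:dual-fund-2}, giving an unconstrained concave quadratic maximisation in $x$ (concavity is guaranteed because, as highlighted in Remark \ref{Rmk:exis}, Assumption \ref{ass:existence} ensures $Q_{k,1}^{11}>0$, so that $(Q_{k,1}^{11})^{-1}$ exists and appears with the correct sign). Setting the gradient in $x$ to zero yields the optimiser $x^\ast=(Q_{k,1}^{11})^{-1}(y-Q_{k,1}^{12}z)$; substituting back and collecting terms in $y$ and $z$ produces the block quadratic form with Hessian precisely $\Gamma^1(Q_{k,1})$ as defined in \er{eq:Gamma}.

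The case $i=2$ proceeds by the same template but with $\psi^2(x,y)=-\demi(x-y)^T M(x-y)$, so that the Hessian of the objective in $x$ equals $-(Q_{k,2}^{11}+M)$. Here Assumption \ref{ass:existence}(i=2), restated in Remark \ref{Rmk:exis}, supplies $Q_{k,2}^{11}+M>0$ so that the maximum is attained at $x^\ast=(Q_{k,2}^{11}+M)^{-1}(My-Q_{k,2}^{12}z)$; once again a direct back-substitution and block identification of the coefficients of $[y;z]^T(\cdot)[y;z]$ gives exactly the matrix $\Gamma^2(Q_{k,2})$ of \er{eq:Gamma}. In each of the three cases the second identity $Q_{k,i}=\Gamma^i(\Theta_{k,i})$ then follows by applying $\Gamma^i$ to both sides of $\Theta_{k,i}=\Gamma^i(Q_{k,i})$ and using $\Gamma^i\circ\Gamma^i=\op{I}$.

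The main obstacle is purely bookkeeping: keeping the block quadratic algebra straight when completing the square in the $i=2$ case, where the cross term $My$ mixes with $Q_{k,2}^{12}z$ in the optimiser and produces several cancellations before the off-diagonal blocks simplify to $-M(Q^{11}+M)^{-1}Q^{12}$ and its transpose. No deeper issue arises, since invertibility of the relevant matrices has been secured in advance by Assumption \ref{ass:existence} (cf.\ Remark \ref{Rmk:exis}), and the symmetry $\Theta_{k,i}^{21}=(\Theta_{k,i}^{12})^T$ is inherited automatically from the symmetry of $Q_{k,i}$.
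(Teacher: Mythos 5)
Your proposal is correct and follows essentially the same route as the paper: both establish $\Theta_{k,i}=\Gamma^i(Q_{k,i})$ by explicitly carrying out the maximisation over $x$ in \er{eq:dual-fund-2} for the quadratic forms, with the same optimisers you identify. The only (harmless) divergence is in the second identity, where the paper repeats the maximisation for the inverse dual \er{eq:dual-fund-1} using the symmetry $\psi^i(x,y)=\psi^i(y,x)$, whereas you invoke the involution $\Gamma^i\circ\Gamma^i=\op{I}$ already noted after \er{eq:Gamma}; both arguments yield $Q_{k,i}=\Gamma^i(\Theta_{k,i})$.
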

\begin{proof}
 From \er{eq:dual-fund-2} and the definition \er{eq:Gamma} of $\Gamma^i$,
\begin{align}
\nn
\mathrm{B}_{k,i}(y,z)&=-\int_{\R^n}^\oplus\psi^i(x,y)\otimes(-\mathrm{S}_{k,i}(x,z))\,dx
=-\max_{x\in\R^n}\left\{\psi^i(x,y)+(-\mathrm{S}_{k,i}(x,z))\right\}
\\\nn
                      &=-\max_{x\in\R^n}\left\{\psi^i(x,y)-\frac{1}{2}\left[\ba{c}x \\ z \ea\right]^T\left[\ba{cc}Q_{k,i}^{11}&Q_{k,i}^{12}\\Q_{k,i}^{21}&Q_{k,i}^{22}\ea\right] \left[\ba{c}x \\ z \ea\right]  \right\}
                      =-\frac{1}{2}\left[\ba{c}y \\ z \ea\right]^T\Gamma^i(Q_{k,i})\left[\ba{c}y \\ z \ea\right].
\end{align}
Comparing with \er{eq:quad-fund-dual} yields $\Theta_{k,i}=\Gamma^i(Q_{k,i})$.
On the other hand, from \er{eq:dual-fund-1},
\begin{align}
\nn
\mathrm{S}_{k,i}(x,z)&=\int_{\R^n}^\oplus\psi^i(x,y)\otimes\mathrm{B}_{k,i}(y,z)\,dy=\max_{y\in\R^n}\left\{\psi^i(x,y)\otimes\mathrm{B}_{k,i}(y,z)\right\}
\\\nn
                         &=\max_{y\in\R^n}\left\{\psi^i(y,x)-\frac{1}{2}\left[\ba{c}y \\ z \ea\right]^T\left[\ba{cc}\Theta_{k,i}^{11}&\Theta_{k,i}^{12}\\\Theta_{k,i}^{21}&\Theta_{k,i}^{22}\ea\right] \left[\ba{c}y \\ z \ea\right]  \right\}
                      =\frac{1}{2}\left[\ba{c}x \\ z \ea\right]^T\Gamma^i(\Theta_{k,i})\left[\ba{c}x \\ z \ea\right],
\end{align}
 where the property $\psi^i(x,y)=\psi^i(y,x), i\in\{1,2,3\}$, is used. Comparing with \er{eq:quad-fund-primal} yields $Q_{k,i}=\Gamma^i(\Theta_{k,i})$.
\end{proof}

The propagations of $Q_{k,i}$ and $\Theta_{k,i}$ for $k\in\Z_{>0}$ are shown in panel (b) in Figure \ref{fig:propagations}.

\subsection{Initializations}
\label{sec:ini}

The initializations of iterations \er{eq:dynamics-Q} for $Q_{k,i}, k\in\Z_{>0}$ and \er{eq:B-update} for $\Theta_{k,i}, k\in\Z_{>0}$ depend on the specific spaces $\SP{i}, i\in\{1,2,3\}$.

{\it{For space $\SPone$}}: According to \er{eq:kernel-primal}, the function $\mathrm{S}_{1,1}$ is
\begin{align}
\nn
\mathrm{S}_{1,1}(x,z)&=\left(\op{S}_1\psi^1(\cdot,z)\right)(x)=\sup_{w\in\R^m}\left\{\ts{\frac{1}{2}}\,x^T\Phi x-\ts{\frac{1}{2}}\gamma^2w^Tw+z^T(Ax+Bw)\right\}
\\\label{eq:Q11}
                     &=\frac{1}{2}\left[\ba{c} x \\ z\ea\right]^T\,Q_{1,1}\,\left[\ba{c} x \\ z\ea\right],\quad\text{with}\quad Q_{1,1}=\left[\ba{cc} \Phi & A^T \\ A & \gamma^{-2}BB^T\ea\right].
\end{align}
Thus,
\begin{align}
\label{eq:Theta-1-1}
\Theta_{1,1}=\Gamma^1(Q_{1,1})=\left[\ba{cc} \Phi^{-1} &-\Phi^{-1}A^T \\ -A\Phi^{-1} & A\Phi^{-1}A^T-\gamma^{-2}BB^T \ea\right].
\end{align}

{\it{For space $\SPtwo$}}: According to \er{eq:kernel-primal}, the function $\mathrm{S}_{1,2}$ is
\begin{align}
\nn
\mathrm{S}_{1,2}(x,z)&=\left(\op{S}_1\psi^2(\cdot,z)\right)(x)=\sup_{w\in\R^m}\left\{\ba{l}\ts{\frac{1}{2}}\,x^T\Phi x-\frac{1}{2}\gamma^2w^Tw\\-\frac{1}{2}(Ax+Bw-z)^TM(Ax+Bw-z)\ea\right\}
\\\label{eq:Q12}
                    &=\frac{1}{2}\left[\ba{c} x \\ z\ea\right]^TQ_{1,2}\left[\ba{c} x \\ z\ea\right], ~\text{with}~ Q_{1,2}=\left[\ba{cc}Q_{1,2}^{11}&Q_{1,2}^{12}\\Q_{1,2}^{21}&Q_{1,2}^{22}\ea\right]=\left[\ba{cc} A^T\Delta A+\Phi &-A^T\Delta \\-\Delta A &\Delta \ea\right],
\end{align}
where
$
\Delta=MB(\gamma^2I+B^TMB)^{-1}B^TM-M.
$
Thus,
\begin{align}
\label{eq:Theta-1-2}
\Theta_{1,2}=\Gamma^2(Q_{1,2})=\left[\ba{cc} M(M+Q_{1,2}^{11})^{-1}M-M & -M(M+Q_{1,2}^{11})^{-1}Q_{1,2}^{12} \\-Q_{1,2}^{21}(M+Q_{1,2}^{11})^{-1}M & Q_{1,2}^{21}(M+Q_{1,2}^{11})^{-1}Q_{1,2}^{12}-Q_{1,2}^{22}\ea\right].
\end{align}

{\it{For space $\SPthree$}}: In this case, the max-plus dual of any $\phi\in\SPthree$ is itself, that is, $\left(\op{D}_{\psi^3}\phi\right)(z)=\phi(z)$ from \er{eq:op-inv-dual}.
 From the definition of $\mathrm{S}_{k,3}$ of \er{eq:kernel-primal}
\begin{align}
\label{eq:value-constr}
\mathrm{S}_{k,3}(x,z)&=\left(\op{S}_k\psi^3(\cdot,z)\right)(x)=\sup_{w\in\cW_{[0,k-1]}}\left\{\sum_{i=0}^{k-1}\left(\ts{\frac{1}{2}}\,x_i^T\Phi x_i-\ts{\frac{1}{2}}\gamma^2w_i^Tw_i\right)+\delta(x_k-z)\right\}
\\\nn
                          &=\sup_{w\in\cW_{[0,k-1]}}\left\{\sum_{i=0}^{k-1}\left(\ts{\frac{1}{2}}\,x_i^T\Phi x_i-\ts{\frac{1}{2}}\gamma^2w_i^Tw_i\right)\biggl| x_k=z\right\}.
\end{align}
That is, $\mathrm{S}_{k,3}(x,z)$ is the optimal control problem \er{eq:value} with constraints $x_0=x$ and $x_k=z$. To compute the constrained optimal control problem \er{eq:value-constr}, denote
\begin{align}
\label{eq:Lamdak}
\Lambda_k(x,z)\doteq\left\{w\in\cW_{[0,k-1]}\biggl|x_0=x, x_k=z~\text{subject to }\er{eq:system}\right\}
\end{align}
the set of controls $w=(w_{0},w_1,\cdots,w_{k-1})$ that steers the initial state from $x_0=x$ to final state $x_k=z$. It is necessary that $\Lambda_k(x,z)\neq\emptyset$ for the function $\mathrm{S}_{k,3}(x,z)$ of \er{eq:value-constr} to be quadratic on $\R^n\times\R^n$. By definition \er{eq:Lamdak}, the set $\Lambda_k$ of controls is intimately tied to reachability via the matrix $B$. Consequently, in characterizing the initialization $\mathrm{S}_{1,3}$ in terms of set $\Lambda_k$, a number of specific cases for the dimensions of matrix $B\in\R^{n\times m}$ must be considered in view of Assumption \ref{ass:system}.

{\it{1) $m=n$}:} In this case, $B\in\R^{n\times m}$ is invertible since rank$(B)=m$ from Assumption \ref{ass:system}. Hence
$
\Lambda_1(x,z)=\left\{w_0\in\R^m\,|\,w_0=B^{-1}(Ax-z)\right\}.
$
From \er{eq:value-constr},
\begin{align}
\nn
\mathrm{S}_{1,3}(x,z)&=\ts{\frac{1}{2}}\,x^T\Phi x-\ts{\frac{1}{2}}\,\gamma^2w_0^Tw_0=\ts{\frac{1}{2}}\,x^T\Phi x-\ts{\frac{1}{2}}\,\gamma^2(Ax-z)^T(BB^T)^{-1}(Ax-z)
\\\label{eq:Q-1-3}
&=\frac{1}{2}\left[\ba{c}x\\z\ea\right]^T Q_{1,3}\left[\ba{c}x\\z\ea\right],\quad \text{with}\quad Q_{1,3}= \left[\ba{cc} \Phi-\gamma^2A^T(BB^T)^{-1}A&\gamma^2A^T(BB^T)^{-1}\\\gamma^2(BB^T)^{-1}A&-\gamma^2(BB^T)^{-1}\ea\right].
\end{align}
Thus, $\Theta_{1,3}=\Gamma^3(Q_{1,3})=-Q_{1,3}$.

{\it{2) $n>m$}}: In this case, $\Lambda_k(x,z)\neq\emptyset,~k\ge n$ for all $(x,z)\in\R^n\times\R^n$ since system \er{eq:system} is controllable by Assumption \ref{ass:system}. Set $\bar{\Phi}=\text{diag}(\Phi,\Phi,\cdots,\Phi)$,

\begin{align}
\nn
\ba{rl}
\bar{x}&\doteq\left[\ba{cccc}x_0^T&x_1^T&\cdots&x_{n-1}^T\ea\right]^T,
\\
\bar{w}&\doteq\left[\ba{cccc}w_0^T&w_1^T&\cdots&w_{n-1}^T\ea\right]^T,
\\
\bar{A}&\doteq\left[\ba{cccc}I&A^T&\cdots&(A^{n-1})^T\ea\right]^T,
\\
\bar{C}&\doteq\left[A^{n-1}B, A^{n-1}B,\cdots,AB,B\right],
\ea
&
\bar{B}\doteq\left[\ba{ccccc}0 &0&\cdots&0&0\\B &0&\cdots&0&0\\AB&B&\cdots&0&0\\ \vdots&\vdots&\vdots&\vdots&\vdots \\A^{n-2}B&A^{n-3}B&\cdots&B&0\ea\right].
\end{align}
Using this notation, the state trajectory $x_{[0,n-1]}$ generated via \er{eq:system} subject to $x_0=x, x_n=z$ can be written as
\begin{align}
\label{eq:dyna-compact}
\bar{x}=\bar{A}x+\bar{B}\bar{w},~
z=A^nx+\bar{C}\bar{w}.
\end{align}
Controllability of $(A,B)$ implies that $\text{rank}(\bar{C})=n$, i.e. $\bar{C}\bar{C}^T$ is invertible. Hence $\Lambda_n(x,z)$ of \er{eq:Lamdak} can be characterized by
$$
\Lambda_n(x,z)=\left\{\bar{w}\in\R^{mn}\,|\,z-A^nx=\bar{C}\bar{w}\right\}=\left\{\bar{C}^+(z-A^nx)+(I-\bar{C}^+\bar{C})\,\tilde{w}\,|\,\tilde{w}\in\R^{nm}\right\}.
$$
Here, $\bar{C}^+=\bar{C}^T(\bar{C}\bar{C}^T)^{-1}\in\R^{mn\times n}$ is the Moore-Penrose pseudo-inverse of $\bar{C}$. The matrix $I-\bar{C}^+\bar{C}\in\R^{mn\times{mn}}$ may not be invertible. Suppose that $\text{rank}(I-\bar{C}^+\bar{C})=r\le mn$. Then, there exists $\bar{D}\in\R^{mn\times r}$ with $\text{rank}(\bar{D})=r$ such that
$
\left\{(I-\bar{C}^+\bar{C})\,\tilde{w}\,|\,\tilde{w}\in\R^{nm}\right\}=\left\{\bar{D}\,\hat{w}\,|\,\hat{w}\in\R^r\right\}.
$
Thus, $\Lambda_n(x,z)$ can be characterized by
$$
\Lambda_n(x,z)=\left\{\bar{C}^+\,(z-A^nx)+\bar{D}\hat{w}\,|\,\hat{w}\in\R^r\right\}.
$$
From \er{eq:value-constr}, \er{eq:dyna-compact},
\begin{align}
\nn
\mathrm{S}_{n,3}(x,z)&=\sup_{w\in\cW_{[0,n-1]}}\left\{\sum_{k=0}^{n-1}\left(\ts{\frac{1}{2}}\,x_k^T\Phi x_k-\ts{\frac{1}{2}}\,\gamma^2w_k^Tw_k\right)\biggl| x_n=z\right\}=\sup_{\bar{w}\in\Lambda_n(y,z)}\left\{\ts{\frac{1}{2}}\bar{x}^T\bar{\Phi}\bar{x}-\ts{\frac{1}{2}}\gamma^2\bar{w}^T\bar{w}\right\}
\\\nn
                           &=\sup_{\bar{w}\in\Lambda_n(x,z)}\left\{\ts{\frac{1}{2}}\,(\bar{A}x+\bar{B}\bar{w})^T\bar{\Phi}(\bar{A}x+\bar{B}\bar{w})-\ts{\frac{1}{2}}\,\gamma^2\bar{w}^T\bar{w}\right\}
\\\nn
                           &=\sup_{\hat{w}\in\R^r}\left\{\ba{l}\ts{\frac{1}{2}}\,(\bar{A}x+\bar{B}(\bar{C}^+(z-A^nx)+\bar{D}\hat{w}))^T\bar{\Phi}
                           (\bar{A}x+\bar{B}(\bar{C}^+(z-A^nx)+\bar{D}\hat{w}))
                           \\
                            -\ts{\frac{1}{2}}\,\gamma^2(\bar{C}^+(z-A^nx)+\bar{D}\hat{w})^T
                            (\bar{C}^+(z-A^nx)+\bar{D}\hat{w})\ea\right\}
\\\label{eq:Theta-1-5}
                           &=\frac{1}{2}\left[\ba{c}x\\z\ea\right]^TQ_{n,3}\left[\ba{c}x\\z\ea\right],\,\text{with}\, Q_{n,3}=\left[\ba{cc}\bar{R}_1^T\bar{\Phi}R_1-\gamma^2\bar{R}_3^T\bar{R}_3 & R_1^T\bar{\Phi}\bar{R}_2-\gamma^2\bar{R}_3^T\bar{R}_4
 \\
\bar{R}_2^T\bar{\Phi}\bar{R}_1-\gamma^2\bar{R}_4^T\bar{R}_3 &\bar{R}_2^T\bar{\Phi}\bar{R}_2-\gamma^2\bar{R}_4^T\bar{R}_4  \ea\right],
\end{align}
where
\be
\ba{rlrll}
\bar{R}_1&=\bar{A}-\bar{B}\bar{C}^+A^n-\bar{B}\bar{D}\bar{\Omega}^{-1}\Pi_1,~~&\bar{R}_2&=\bar{B}\bar{C}^+-\bar{B}\bar{D}\bar{\Omega}^{-1}\Pi_2,
\\\nn
\bar{R}_3&=\bar{C}^{+}A^n+\bar{D}\bar{\Omega}^{-1}\Pi_1,~~&\bar{R}_4&=-\bar{C}^++\bar{D}\bar{\Omega}^{-1}\Pi_2,
\\\nn
\Pi_1&=\bar{D}\bar{\Phi}(\bar{A}-\bar{B}\bar{C}^{+}A^n)+\gamma^2\bar{C}^+A^n,~~&\Pi_2&=\bar{D}\bar{\Phi}\bar{B}\bar{C}^{+}-\gamma^2\bar{D}\bar{C}^{+},
\\\nn
\bar{\Omega}&=\bar{D}^T(\bar{B}^T\bar{\Phi}\bar{B}-\gamma^2I)\bar{D}.
\ea
\ee
Thus, $\Theta_{n,3}=\Gamma^3(Q_{n,3})=-Q_{n,3}$.

\subsection{Computational method}
\label{sec:comp-method}

Based on Theorem \ref{thm:operator-duality}, \ref{thm:exponential} and \ref{thm:Q-Theta}, a max-plus fundamental solution based computational method can be summarized by the following steps:

\begin{center}
\vspace{3mm}
\parbox[c]{15cm}{\centering
\begin{itemize}
\item[\done]
Obtain the initial Hessian $Q_{1,i}$  using \er{eq:Q11}, \er{eq:Q12}, or \er{eq:Q-1-3}, or $Q_{n,3}$ using \er{eq:Theta-1-5}, see Section \ref{sec:ini}.

\item[\dtwo]
 Compute the matrix $\Theta_{1,i}$ via $\Theta_{1,i}=\Gamma^i(Q_{1,i})$ for $Q_{1,i}$ of \er{eq:Q11}, \er{eq:Q12}, \er{eq:Q-1-3}; Or $\Theta_{n,3}$ via $\Theta_{n,3}=\Gamma^3(Q_{n,3})$ for $Q_{n,3}$ of \er{eq:Theta-1-5}, see Theorem \ref{thm:Q-Theta}.

\item[\dthree]
 Propagate the matrices $\Theta_{k,i}, k\in\Z_{>0}$, via \er{eq:B-update}. Use $k_1=k_2=k$ for fast computation via index doubling (or $k_1=1$ and $k_2=k$ for slower linear indexing).

\item[\dfour]
Obtain the Hessian $Q_{k,i}$ for some $k\in\Z_{>0}$ via $Q_{k,i}=\Gamma^i(\Theta_{k,i})$ and \er{eq:Gamma}, see Theorem \ref{thm:Q-Theta}.

\item[\dfive]
Compute the value function $W_k$ via \er{eq:fund-solu} and \er{eq:quad-fund-primal}, together with the max-plus dual of the terminal payoff $\op{D}_{\psi^i}{\Psi}$.
\end{itemize}
\vspace{3mm}
}
\end{center}

As indicated in the above steps, this computational method predominantly involves repeated applications of the matrix operation $\circledast$ of \er{eq:circledast} in Step {\dthree}. These operations occur in the dual space, and correspond to propagation of the Hessian $\Theta_{k,i}$ of the kernel $\mathrm{B}_{k,i}$ of the max-plus integral operator $\op{B}_{k,i}$. (Recall that this operator $\op{B}_{k,i}$ defines the fundamental solution semigroup, with properties inherited from the dynamic programming evolution operator $\op{S}_{k,i}$ defined in the primal space by \er{eq:op-DPP-k}, see Remark \ref{rmk:semigroup-2}.) As this propagation $\Theta_{k,i}$ occurs in the dual space, two additional primal / dual operations are required by the computational method, see Steps {\done},{\dtwo} and {\dfour}, {\dfive}. These operations map the terminal payoff to the dual space, and the computed value function back to the primal space. Both involve maximization, see \er{eq:op-dual} and \er{eq:op-inv-dual}. However, for longer time horizons, the computational effort associated with these maximizations is dominated by the aforementioned $\Theta_{k,i}$ propagation via matrix operation \er{eq:circledast}. The computational complexity of propagating $\Theta_{1,i}$ to $\Theta_{k,i}$ in Step {\dthree} is shown to be in the order of $\log_2{k}$ in Example \ref{sec:ex-LQR}. As this operation is fast and accurate, the computational method is expected to be similarly fast and accurate, particularly on longer time horizons. This expectation is realized in the specific example considered in Section \ref{sec:ex-LQR}.

In the infinite horizon case, convergence of the iteration $\Theta_{k,i}$ is critical. This is discussed in detail in Section \ref{sec:inf}.

\section{Infinite horizon linear regulator problems}

\label{sec:inf}

The infinite horizon linear regulator problem is defined as the limit of finite horizon linear regulator problem
\er{eq:value}  as $k\rightarrow\infty$. This infinite horizon optimal control problem can be studied via convergence of the sequence of value functions $\{W_k\}_{k=0}^\infty$. Since $W_{k+1}=\op{S}_1W_k,k\in\Z_{>0}$, the convergence of $W_k\rightarrow W,k\rightarrow\infty$ implies that
$
0\otimes W=\op{S}_1W.
$
That is, the limit $W$ is a max-plus eigenvector of the operator $\op{S}_1$ corresponding to the eigenvalue $0$ (the max-plus multiplicative identity). In the special case of LQR (i.e. a linear regulator problem with a quadratic terminal payoff), this is the well-studied convergence problem of the difference Riccati equation (DRE) \er{eq:DRE} \cite{BGP85}, \cite{CM:70}. The value function of the infinite horizon LQR problem is a quadratic function characterized by the  stabilizing solution of the corresponding algebraic Riccati equation (ARE). However, for the non-quadratic linear regulator problem, the convergence of $\{W_{k}\}_{k=0}^\infty$ of \er{eq:value}
cannot be reduced to the convergence problem of DRE \er{eq:DRE}, as the value functions  $W_k, k\in\Z_{\ge0}$ are not necessarily quadratic. By employing the representation of $W_k$ of \er{eq:fund-solu}, this more general convergence problem can be investigated via the convergence of the auxiliary value functions $\{\mathrm{S}_{k,i}\}_{k=1}^\infty$ of \er{eq:kernel-primal}.

\subsection{Convergence of the fundamental solution semigroup kernels $\mathrm{B}_{k,i}$}

The sequence of quadratic functions $\{\mathrm{S}_{k,i}\}_{k=1}^\infty$ is characterized by the matrix sequence $\{Q_{k,i}\}_{k=1}^\infty$,  while the sequence of duals  $\{\mathrm{B}_{k,i}\}_{k=1}^\infty$ is characterized by the matrix sequence $\{\Theta_{k,i}\}_{k=1}^\infty$. A pair of matrices $Q_{k,i}$ and $\Theta_{k,i}$ is related by $\Gamma^i$ according to Theorem \ref{thm:Q-Theta}. Hence, the convergence of $\{\mathrm{S}_{k,i}\}_{k=1}^\infty$ and $\{\mathrm{B}_{k,i}\}_{k=1}^\infty$ is reduced to the convergence of matrix sequences $\{Q_{k,i}\}_{k=1}^\infty$ and $\{\Theta_{k,i}\}_{k=1}^\infty$ respectively.

From Theorem \ref{thm:exponential}, the sequence $\{\Theta_{k,i}\}_{k=1}^\infty$ of \er{eq:quad-fund-dual} satisfies \er{eq:B-update}, where the initial condition is given by  \er{eq:Theta-1-1}, \er{eq:Theta-1-2}, \er{eq:Q-1-3}, or \er{eq:Theta-1-5}  depending on the specific case specified there. To present a convergence result for the sequence $\{\Theta_{k,i}\}_{k=1}^n$, the convergence of a matrix sequence $\{\Omega_k\}_{k=1}^\infty$ generated by
\begin{align}
\label{eq:sequence-theta-k}
\Omega_{k+1}=\Omega_k\circledast\Omega_k,~~ \Omega_1=\Omega,
\end{align}
is proved first. Here, the initial condition $\Omega\in\R^{2n\times2n}$ takes the form
\begin{align}
\label{eq:ini-Omega}
\Omega=\left[\ba{cc} \Omega^{11} &\Omega^{12} \\ \Omega^{21} &\Omega^{22}\ea\right],
\end{align}
satisfying $(\Omega^{12})^T={\Omega^{21}}$ and $\Omega^{11}+\Omega^{22}>0$. That is, in considering \er{eq:sequence-theta-k}, convergence of the subsequence $\{\Theta_{2^k,i}\}_{k=1}^\infty$ is of interest. The following convergence result is useful in proving the convergence of this sequence.
\begin{lemma}
\label{lem:sequence}
Fix any constants $\sigma>0, \lambda>0, \rho>0$ such that
\begin{align}
\label{eq:rhosiglam}
\rho^{-2}\sigma<1, ~~ \rho\le\lambda-2\rho^{-1}\sigma(1-\rho^{-2}\sigma)^{-1}.
\end{align}
Then, the sequence $\{(\sigma_k,\lambda_k)\}_{k=1}^\infty$ defined by
\begin{align}
\label{eq:seqa}
\sigma_{k+1}=\lambda_{k}^{-2}\sigma_k^2, ~~ \lambda_{k+1}=\lambda_k-2\lambda_k^{-1}\sigma_k,~~\quad\sigma_1=\sigma,\lambda_1=\lambda
\end{align}
is convergent, with $\sigma_k\rightarrow0$, $\lambda_k>0$ for all $k\in\Z_{>0}$ and $\lambda_k\downarrow \bar{\lambda}\ge\rho$ as $k\rightarrow\infty$.
\end{lemma}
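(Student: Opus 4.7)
\noindent\textbf{Proof proposal for Lemma \ref{lem:sequence}.}

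The plan is to run a single induction that simultaneously controls both $\sigma_k$ and $\lambda_k$, using the same quantitative lower bound on $\lambda_k$ that appears in the hypothesis \er{eq:rhosiglam}. Introduce the rescaled sequence $\beta_k \doteq \rho^{-2}\sigma_k$ and propose the loop invariant
\begin{align*}
(\mathrm{I}_k):\quad \beta_k<1,\qquad \lambda_k \ge \rho + 2\rho^{-1}\sigma_k (1-\beta_k)^{-1}.
\end{align*}
The base case $k=1$ is exactly the pair of hypotheses in \er{eq:rhosiglam}. Note that $(\mathrm{I}_k)$ implies $\lambda_k\ge \rho>0$ since the extra term is nonnegative.

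For the inductive step, suppose $(\mathrm{I}_k)$ holds. The $\sigma$-component is easy: from $\lambda_k\ge\rho$, the recursion \er{eq:seqa} gives $\beta_{k+1} = \sigma_k^2/(\rho^2\lambda_k^2) \le \sigma_k^2/\rho^4 = \beta_k^2 <\beta_k<1$, which secures the first half of $(\mathrm{I}_{k+1})$ and simultaneously delivers the quadratic decay $\beta_{k+1}\le \beta_k^2$. The real work is in the $\lambda$-component. Combining $\lambda_k>\rho$ with the recursion,
\begin{align*}
\lambda_{k+1}
 = \lambda_k - 2\sigma_k/\lambda_k
 > \lambda_k - 2\sigma_k/\rho
 \ge \rho + 2\rho^{-1}\sigma_k\!\left[(1-\beta_k)^{-1}-1\right]
 = \rho + 2\rho^{-1}\sigma_k\,\beta_k/(1-\beta_k).
\end{align*}
It then suffices to show $\sigma_k\beta_k/(1-\beta_k) \ge \sigma_{k+1}/(1-\beta_{k+1})$, and substituting $\sigma_{k+1}=\sigma_k^2/\lambda_k^2$ and $\beta_k=\sigma_k/\rho^2$ reduces this to $\lambda_k^2/\rho^2 \ge (1-\beta_k)/(1-\beta_{k+1})$, which follows at once from $\lambda_k\ge\rho$ and from the monotonicity $\beta_{k+1}\le\beta_k$ just established. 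This closes the induction.

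From $(\mathrm{I}_k)$ we immediately read off $\lambda_k\ge\rho>0$ for every $k$, and the strictly positive correction $2\sigma_k/\lambda_k$ in \er{eq:seqa} shows that $\{\lambda_k\}$ is strictly decreasing and bounded below by $\rho$, hence $\lambda_k\downarrow\bar\lambda\ge\rho$. The quadratic estimate $\beta_{k+1}\le\beta_k^2$ iterates to $\beta_k\le\beta_1^{2^{k-1}}\to 0$, so $\sigma_k=\rho^2\beta_k\to 0$, completing the proof.

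The only nonroutine point is the inductive step for $\lambda_{k+1}$: one must find a bound strong enough to reinject the exact functional form $\rho + 2\rho^{-1}\sigma_k(1-\beta_k)^{-1}$ at the next step. The trick is to separate that estimate into the ``$\rho$'' piece and a remainder controlled by $\beta_k/(1-\beta_k)$, and then exploit the monotonicity of the map $t\mapsto t/(1-t)$ on $[0,1)$ together with $\beta_{k+1}\le\beta_k$. Everything else is algebraic bookkeeping.
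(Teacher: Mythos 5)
Your proof is correct, and it is organized quite differently from the paper's. The paper proceeds by majorization: it introduces an auxiliary sequence $\hat\sigma_{k+1}=\rho^{-2}\hat\sigma_k^2$, $\hat\lambda_{k+1}=\hat\lambda_k-2\rho^{-1}\hat\sigma_k$ whose $\sigma$-part is explicitly solvable, sums the resulting geometric-type series to obtain the uniform bound $\sum_k\hat\sigma_k\le\sigma(1-\rho^{-2}\sigma)^{-1}$ and hence $\hat\lambda_k\ge\rho$, and then transfers these bounds to the true sequence through a second intermediate sequence $(\bar\sigma_k,\bar\lambda_k)$ and two comparison inductions. You instead run a single induction on the invariant $\lambda_k\ge\rho+2\rho^{-1}\sigma_k(1-\rho^{-2}\sigma_k)^{-1}$, whose base case is literally the hypothesis \er{eq:rhosiglam} and whose inductive step reduces, after the substitutions $\sigma_{k+1}=\lambda_k^{-2}\sigma_k^2$ and $\beta_k=\rho^{-2}\sigma_k$, to the two one-line facts $\lambda_k\ge\rho$ and $\beta_{k+1}\le\beta_k$. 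The quantitative content is identical -- your invariant is precisely a closed form for the tail-sum bound $2\rho^{-1}\sum_{j\ge k}\hat\sigma_j$ that the paper computes in \er{eq:lem-prof-1}--\er{eq:lem-prof-2} -- but your packaging eliminates both auxiliary sequences and the attendant comparison arguments, at the cost of having to guess the right self-reproducing form of the invariant. Your version is shorter and arguably cleaner; the paper's version makes the mechanism (dominate $\sigma_k$ by an explicitly summable sequence, then bound the total decrease of $\lambda_k$) more transparent. One cosmetic remark: in the display for $\lambda_{k+1}$ you write a strict inequality $\lambda_k-2\sigma_k/\lambda_k>\lambda_k-2\sigma_k/\rho$, which indeed holds because the invariant together with $\sigma_k>0$ gives $\lambda_k>\rho$ strictly, but a weak inequality would suffice and avoids any need to justify strictness.
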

\begin{proof}
Firstly, construct a sequence $\{(\hat{\sigma}_k,\hat{\lambda}_k)\}_{k=1}^\infty$ by
\begin{align}
\label{eq:seqb}
\hat{\sigma}_{k+1}=\rho^{-2}\hat{\sigma}_k^2, ~~\hat{\lambda}_{k+1}=\hat{\lambda}_k-2\rho^{-1}\hat{\sigma}_k,~~\quad \hat{\sigma}_1=\sigma,~\hat{\lambda}_1=\lambda.
\end{align}
From the definition of $\hat{\sigma}_k$ in \er{eq:seqb}, it follows that  $\hat{\sigma}_k>0, k\in\Z_{>0}$, and
\begin{align}
\label{eq:lem-prof-1}
\sum_{k=1}^{\infty}\hat{\sigma}_k&=\sum_{k=1}^{\infty}\rho^2(\rho^{-2}\sigma)^{2^{k-1}}\le\rho^2\sum_{k=1}^{\infty}(\rho^{-2}\sigma)^{k}=\rho^2(\rho^{-2}\sigma)(1-\rho^{-2}\sigma)^{-1}=\sigma(1-\rho^{-2}\sigma)^{-1},
\end{align}
where the left-hand inequality in \er{eq:rhosiglam}  and the fact $2^{k-1}\ge k$ for $k\in \Z_{>0}$ are used.  Thus, $\hat{\sigma}_k\rightarrow0$ as $k\rightarrow\infty$. Turning to $\hat{\lambda}_k$, note that for any $k\in\Z_{>0}$, \er{eq:seqb}, \er{eq:lem-prof-1}, and the right-hand of inequality \er{eq:rhosiglam} imply that
\begin{align}
\label{eq:lem-prof-2}
\hat{\lambda}_k&=\hat{\lambda}_1-2\rho^{-1}\sum_{j=1}^{k-1}\hat{\sigma}_j>\hat{\lambda}_1-2\rho^{-1}\sum_{j=1}^{\infty}\hat{\sigma}_j\ge\lambda-2\rho^{-1}\sigma(1-\rho^{-2}\sigma)^{-1}\ge\rho>0.
\end{align}
The right-hand definition of \er{eq:seqb} also implies that  $\{\hat{\lambda}_k\}_{k=1}^\infty$ is decreasing. Hence, there exists $\hat{\lambda}\geq\rho$ such that  $\hat{\lambda}_k\downarrow\hat{\lambda}$.

Next, construct a second sequence $\{(\bar{\sigma}_k, \bar{\lambda}_k)\}_{k=1}^\infty$ by
\begin{align}
\label{eq:seqc}
\bar{\sigma}_{k}=\hat{\sigma}_k,~~                  \bar{\lambda}_{k+1}=\bar{\lambda}_k-2\bar{\lambda}_k^{-1}\bar{\sigma}_k, ~~\quad\bar{\sigma}_1=\sigma, \bar{\lambda}_1=\lambda.
\end{align}
By inspection of \er{eq:seqb} and \er{eq:seqc}, $\bar{\lambda}_1=\hat{\lambda}_1$. In order to show that $\bar{\lambda}_k\ge\hat{\lambda}_k, k\in\Z_{>0}$, using mathematical induction, suppose that this inequality holds for $k$. Then, applying \er{eq:lem-prof-2} yields
\begin{align}
\nn
\bar{\lambda}_{k+1}&=\bar{\lambda}_k-2\bar{\lambda}_k^{-1}\bar{\sigma}_k\ge\hat{\lambda}_k-2\hat{\lambda}_k^{-1}\hat{\sigma}_k>\hat{\lambda}_k-2\rho^{-1}\hat{\sigma}_k=\hat{\lambda}_{k+1}.
\end{align}
That is, $\bar\lambda_k\ge\hat\lambda_k$ implies that $\bar\lambda_{k+1}\ge\hat\lambda_{k+1}$. Similarly, induction can be applied to show that the sequence $\{(\sigma_k,\lambda_k)\}_{k=1}^\infty$ of \er{eq:seqa} satisfies
\begin{align}
\label{eq:propty-siglam}
\sigma_k\le\bar{\sigma}_k,~\lambda_k\ge\bar{\lambda}_k,~k\in\Z_{>0}.
\end{align}
By inspection of \er{eq:seqa} and \er{eq:seqc}, $\sigma_1=\bar{\sigma}_1=\sigma$ and $\lambda_1=\bar{\lambda}_1=\lambda$. Supposing that the inequality \er{eq:propty-siglam} holds for index $k$, it is required to demonstrate that \er{eq:propty-siglam} holds for index $k+1$. Applying $\bar{\lambda}_k\ge\hat{\lambda}_k\ge\rho$ and $\bar{\sigma}_k=\hat{\sigma}_k$ for $k\in\Z_{>0}$ yields
$
\sigma_{k+1}=\lambda_k^{-2}\sigma_k^{2}\le\bar{\lambda}_k^{-2}\bar{\sigma}_k^2\le\rho^{-2}\bar{\sigma}_k^2=\bar{\sigma}_{k+1}.
$
 Similarly. it can be shown that
$
\lambda_{k+1}=\lambda_k-2\lambda_k^{-1}\sigma_k\ge\bar{\lambda}_k-2\bar{\lambda}_k^{-1}\bar{\sigma}_k=\bar{\lambda}_{k+1},
$
as required.

Thus, it has been shown that
$
\sigma_k\le\bar{\sigma}_k=\hat{\sigma}_k\rightarrow0,~\lambda_k\ge\bar{\lambda}_k\ge\hat{\lambda}_k\ge\rho, k\in\Z_{>0}.
$
By inspection of the definition ${\sigma_k}$ in \er{eq:seqa}, $\sigma_k>0, k\in\Z_{>0}$. Thus $\sigma_k\rightarrow0, k\rightarrow\infty$.  It follows immediately from \er{eq:seqa} that the sequence $\{\lambda_k\}_{k=1}^\infty$ is decreasing. Thus, there exists $\bar{\lambda}\ge\rho$ such that ${\lambda}_k\downarrow\bar{\lambda}$.
\end{proof}

By applying Lemma \ref{lem:sequence},  next theorem proves convergence of the sequence $\{\Omega_k\}_{k=1}^\infty$ specified by \er{eq:sequence-theta-k}.
\begin{theorem}
\label{thm:conv-circled}
Fix any constants $\sigma>0,\lambda>0,\rho>0$ such that \er{eq:rhosiglam} holds. Suppose that the matrix  $\Omega$ of \er{eq:ini-Omega} satisfies
\begin{align}
\label{eq:ineq-Omega}
\Omega^{12}\Omega^{21}\le\sigma I,~~ \Omega^{21}\Omega^{12}\le\sigma I, ~~ \Omega^{11}+\Omega^{22}\ge\lambda I.
\end{align}
Then, the matrix sequence $\{\Omega_k\}_{k=1}^\infty$ specified by \er{eq:sequence-theta-k} satisfies $\Omega_k^{11}+\Omega_k^{22}\ge\rho I, k\in\Z_{>0}$, and there exists a matrix $\Omega_{\infty}=\text{diag}(\Omega_{\infty}^{11},\Omega_{\infty}^{22})$
such that $\Omega_{\infty}^{11}+\Omega_{\infty}^{22}\ge \rho I$ and
$
\Omega_k\rightarrow\Omega_{\infty},~k\rightarrow\infty.
$
\end{theorem}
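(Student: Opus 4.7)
The plan is to reduce the matrix recursion \er{eq:sequence-theta-k} to the scalar recursion \er{eq:seqa} of Lemma \ref{lem:sequence} via spectral-norm (and L\"owner-order) bounds, and then invoke the lemma directly. First I would expand the blocks of $\Omega_{k+1} = \Omega_k \circledast \Omega_k$ using \er{eq:circledast}: writing $T_k \doteq (\Omega_k^{11}+\Omega_k^{22})^{-1}$, this gives $\Omega_{k+1}^{11} = \Omega_k^{11} - \Omega_k^{12}T_k\Omega_k^{21}$, $\Omega_{k+1}^{22} = \Omega_k^{22} - \Omega_k^{21}T_k\Omega_k^{12}$, $\Omega_{k+1}^{12} = -\Omega_k^{12}T_k\Omega_k^{12}$, and $\Omega_{k+1}^{21} = -\Omega_k^{21}T_k\Omega_k^{21}$. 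A short induction using these formulas shows that the symmetry inherited from the initial $\Omega$ (i.e.\ $\Omega_k^{11}, \Omega_k^{22}$ symmetric and $(\Omega_k^{12})^T = \Omega_k^{21}$) is preserved by $\circledast$.

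Next I would prove by induction on $k$ that $\Omega_k^{12}\Omega_k^{21} \leq \sigma_k I$, $\Omega_k^{21}\Omega_k^{12} \leq \sigma_k I$, and $\Omega_k^{11} + \Omega_k^{22} \geq \lambda_k I$, where $\{(\sigma_k, \lambda_k)\}_{k=1}^{\infty}$ is the scalar sequence in \er{eq:seqa} initialized at $\sigma_1 = \sigma$, $\lambda_1 = \lambda$. The base case is exactly \er{eq:ineq-Omega}. For the induction step, the preserved symmetry gives $\Omega_{k+1}^{12}\Omega_{k+1}^{21} = \Omega_{k+1}^{12}(\Omega_{k+1}^{12})^T$, whose spectral norm equals $\|\Omega_{k+1}^{12}\|^2$; combined with $\|T_k\| \leq \lambda_k^{-1}$ (since $T_k \leq \lambda_k^{-1}I$ by the induction hypothesis) and $\|\Omega_k^{12}\|^2 \leq \sigma_k$, submultiplicativity of the spectral norm yields $\|\Omega_{k+1}^{12}\|^2 \leq \sigma_k^2 \lambda_k^{-2} = \sigma_{k+1}$. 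For the diagonal-sum bound, the matrix inequality $M T_k M^T \leq \lambda_k^{-1} M M^T \leq (\sigma_k/\lambda_k)I$ (valid whenever $MM^T \leq \sigma_k I$ and $T_k \leq \lambda_k^{-1}I$) applied to each of the two correction terms in $\Omega_{k+1}^{11} + \Omega_{k+1}^{22} = (\Omega_k^{11} + \Omega_k^{22}) - \Omega_k^{12}T_k\Omega_k^{21} - \Omega_k^{21}T_k\Omega_k^{12}$ yields $\Omega_{k+1}^{11} + \Omega_{k+1}^{22} \geq (\lambda_k - 2\sigma_k/\lambda_k)I = \lambda_{k+1}I$.

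Lemma \ref{lem:sequence} then supplies $\sigma_k \downarrow 0$ and $\lambda_k \downarrow \bar\lambda \geq \rho$, in particular ensuring $T_k$ is well-defined at every step of the induction. The off-diagonal bounds force $\Omega_k^{12}, \Omega_k^{21} \to 0$. For the diagonal blocks, the one-step increments satisfy $\|\Omega_{k+1}^{11} - \Omega_k^{11}\| = \|\Omega_k^{12}T_k\Omega_k^{21}\| \leq \sigma_k/\lambda_k \leq \sigma_k/\rho$, and $\sum_k \sigma_k < \infty$ by the doubly-exponential decay established in the proof of Lemma \ref{lem:sequence} (cf.\ \er{eq:lem-prof-1}). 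Hence $\{\Omega_k^{11}\}$ is Cauchy in the spectral-norm topology and converges to some $\Omega_\infty^{11}$; similarly $\Omega_k^{22} \to \Omega_\infty^{22}$. Setting $\Omega_\infty \doteq \mathrm{diag}(\Omega_\infty^{11}, \Omega_\infty^{22})$ and passing to the limit in $\Omega_k^{11} + \Omega_k^{22} \geq \lambda_k I$ yields $\Omega_\infty^{11} + \Omega_\infty^{22} \geq \bar\lambda I \geq \rho I$. The main obstacle I expect is the induction step: one must lift the scalar recursion \er{eq:seqa} to matrix (L\"owner-order) inequalities rather than merely spectral-norm bounds, and the preserved symmetry is precisely what makes this lift possible.
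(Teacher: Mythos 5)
Your proposal is correct and follows essentially the same route as the paper: the same inductive transfer of the scalar recursion \er{eq:seqa} to the L\"owner-order bounds \er{eq:ineq-Omega-k} on the blocks of $\Omega_k$, an appeal to Lemma \ref{lem:sequence}, and a Cauchy argument for the diagonal blocks driven by the summability (equivalently, geometric decay $\sigma_{k+1}\le(\rho^{-2}\sigma)\sigma_k$) of the off-diagonal bound. The only cosmetic differences are that you make the preservation of the symmetry $(\Omega_k^{12})^T=\Omega_k^{21}$ explicit (the paper uses it tacitly) and you phrase the off-diagonal induction step via spectral norms rather than directly in the L\"owner order.
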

\begin{proof}
By definition of \er{eq:circledast} $\circledast$ operation ,
\be
\label{eq:Omega-update}
\ba{rlll}
\Omega_{k+1}^{11}&=\Omega_{k}^{11}-\Omega_k^{12}(\Omega_{k}^{11}+\Omega_{k}^{22})^{-1}\Omega_k^{21},&\Omega_{k+1}^{12}&=-\Omega_k^{12}(\Omega_k^{11}+\Omega_k^{22})^{-1}\Omega_k^{12},
\\
\Omega_{k+1}^{21}&=-\Omega_k^{21}(\Omega_k^{11}+\Omega_k^{22})^{-1}\Omega_k^{21},&\Omega_{k+1}^{22}&=\Omega_{k}^{22}-\Omega_k^{21}(\Omega_{k}^{11}+\Omega_{k}^{22})^{-1}\Omega_k^{12}.
\ea
\ee
It will be shown by mathematical induction that for any $k\in\Z_{>0}$,
\begin{align}
\label{eq:ineq-Omega-k}
\Omega_k^{12}\Omega_k^{21}\le\sigma_k I,~~ \Omega_k^{21}\Omega_k^{12}\le\sigma_k I, ~~ \Omega_k^{11}+\Omega_k^{22}\ge\lambda_k I,
\end{align}
where  $\{(\sigma_k,\lambda_k)_{k=1}^\infty$ are as per \er{eq:seqa}. The $k=1$ case is immediate from \er{eq:sequence-theta-k}, \er{eq:ini-Omega}, and \er{eq:ineq-Omega}. Suppose that \er{eq:ineq-Omega-k} holds for $k$, \er{eq:ineq-Omega-k} is required to hold for $k+1$. From \er{eq:Omega-update} and \er{eq:ineq-Omega-k},
\begin{align}
\nn
\Omega_{k+1}^{12}\Omega_{k+1}^{21}&=\Omega_k^{12}(\Omega_k^{11}+\Omega_k^{22})^{-1}\Omega_k^{12}\Omega_k^{21}(\Omega_k^{11}+\Omega_k^{22})^{-1}\Omega_k^{21}\le\lambda_k^{-2}\sigma_k^2I=\sigma_{k+1}I.
\end{align}
A similar argument proves that
$
\Omega_{k+1}^{21}\Omega_{k+1}^{12}\le\sigma_{k+1}I.
$
From \er{eq:Omega-update},
\begin{align}
\nn
\Omega_{k+1}^{11}+\Omega_{k+1}^{22}&=\Omega_{k}^{11}+\Omega_{k}^{22}-\Omega_k^{12}(\Omega_{k}^{11}+\Omega_{k}^{22})^{-1}\Omega_k^{21}-\Omega_k^{21}(\Omega_{k}^{11}+\Omega_{k}^{22})^{-1}\Omega_k^{12}
\\\label{eq:omega-diag}
&\ge\lambda_kI-2\lambda_k^{-1}\sigma_kI=\lambda_{k+1}I\ge \rho I>0.
\end{align}

According to Lemma \ref{lem:sequence}, $\sigma_k\downarrow0, k\rightarrow\infty$, and there exists $\bar{\lambda}>0$ such that $\lambda_k\downarrow\bar{\lambda}>0, k\rightarrow\infty$, where \er{eq:rhosiglam} is assumed as per the Theorem statement. Since $\Omega_k^{12}=(\Omega_k^{21})^T, k\in\Z_{>0}$, \er{eq:ineq-Omega-k} implies that $
||\Omega_k^{12}||_2\le \sqrt{\sigma_k}\downarrow0,~||\Omega_k^{21}||_2\le \sqrt{\sigma_k}\downarrow0,~k\in\Z_{>0},
$
where $||\cdot||_2$ denotes the matrix spectra norm. Thus,
$
\Omega_k^{12}\rightarrow0,~\Omega_k^{21}\rightarrow0,~k\rightarrow\infty.
$
From (\ref{eq:Omega-update}) and \er{eq:omega-diag},
\begin{align}
\label{eq:prof-extra-3}
||\Omega_{k-1}^{11}-\Omega_k^{11}||_2&=||\Omega_k^{12}(\Omega_{k}^{11}+\Omega_{k}^{22})^{-1}\Omega_k^{21}||_2\le||\Omega_k^{12}||_2||\Omega_k^{21}||_2||(\Omega_{k}^{11}+\Omega_{k}^{22})^{-1}||_2\le\sigma_k\rho^{-1}.
\end{align}
From Lemma 4.1, $\lambda_k\ge\rho>0$, $\sigma_k\le\sigma$, and
$$
\sigma_{k+1}=\lambda_k^{-2}\sigma_k^2=(\lambda_k^{-2}\sigma_k)\sigma_k\le(\rho^{-2}\sigma)\sigma_k.
$$
Hence \er{eq:prof-extra-3} turns into
\begin{align}
\label{eq:prof-extra-1}
\|\Omega_{k-1}^{11}-\Omega_k^{11}\|_2\le (\rho^{-2}\sigma)\sigma_{k-1}\rho^{-1}.
\end{align}
Note that it is assumed that $\rho^{-2}\sigma<1$. Fix any $p,q\in\Z_{>0}$ such that $p<q$. Applying \er{eq:prof-extra-1}
\begin{align}
\nn
 \|\Omega_p^{11}-\Omega_q^{11}\|_2&\le \|\Omega_p^{11}-\Omega_{p+1}^{11}\|_2+ \|\Omega_{p+1}^{11}-\Omega_{p+2}^{11}\|_2+\cdots+ \|\Omega_{q-1}^{11}-\Omega_{q}^{11}\|_2
                                                         \\\nn
                                                         &\le (\rho^{-2}\sigma)\sigma_p\rho+(\rho^{-2}\sigma)^2\sigma_p\rho+\cdots+(\rho^{-2}\sigma)^{q-p}\sigma_p\rho
                                                         \\\nn
                                                         &=\frac{\rho^{-2}\sigma-(\rho^{-2}\sigma)^{q-p+1}}{1-\rho^{-2}\sigma} \rho \sigma_p
\\\nn
&\le \frac{\rho^{-1}\sigma}{1-\rho^{-2}\sigma}\sigma_p.
\end{align}
Thus $\|\Omega_p^{11}-\Omega_q^{11}\|_{2}\rightarrow 0$ as $p\rightarrow\infty$ since $\sigma_p\rightarrow0$. Hence, the sequence $\{\Omega_k^{11} \}_{k=1}^\infty$ is a Cauchy sequence. Consequently, there exists $\Omega_{\infty}^{11}$ such that $\Omega_k^{11}\rightarrow\Omega_{\infty}^{11}, k\rightarrow\infty$. It can be similarly shown that there exists $\Omega_{\infty}^{22}$ such that $\Omega_k^{22}\rightarrow\Omega_{\infty}^{22}, k\rightarrow\infty$. From \er{eq:omega-diag}, $\Omega_{\infty}^{11}+\Omega_{\infty}^{22}\ge \rho I$.
\end{proof}

Applying Theorem \ref{thm:conv-circled} to the matrices $\Theta_{1,i}$ of \er{eq:Theta-1-1}, \er{eq:Theta-1-2}, or \er{eq:Q-1-3} leads to convergence of a subsequence $\{\Theta_{2^k,i}\}_{k=1}^\infty$. Applying Theorem \ref{thm:conv-circled} to the matrices $\Theta_{n,3}$ of \er{eq:Theta-1-5} leads to convergence of a subsequence $\{\Theta_{n2^k,3}\}_{k=1}^\infty$.   To prove the convergence of the sequence $\{\Theta_{k,i}\}_{k=1}^\infty$, the following result is useful.

\begin{theorem}
\label{thm:mono-Theta}
Fix any $i\in\{1,2,3\}$ and constants  $\sigma>0,\lambda>0,\rho>0$ such that \er{eq:rhosiglam} holds. Suppose that inequality \er{eq:ineq-Omega} holds for a matrix $\Theta_{p,i}, p\in\Z_{>0}$ in the sequence $\{\Theta_{k,i}\}_{k=1}^\infty$ of \er{eq:quad-fund-dual}.  Then, the subsequence $\{\Theta_{kp,i}\}_{k=1}^\infty$ satisfies
\begin{align}
\label{eq:mono-Theta}
\left\{\ba{rlrl}\Theta_{(k+1)p,i}^{11}&\le\Theta_{kp,i}^{11},& \Theta_{(k+1)p,i}^{12}\Theta_{(k+1)p,i}^{21}&\le\Theta_{kp,i}^{12}\Theta_{kp,i}^{21},
\\
\Theta_{(k+1)p,i}^{21}\Theta_{(k+1)p,i}^{12}&\le\Theta_{kp,i}^{21}\Theta_{kp,i}^{12},&\Theta_{(k+1)p,i}^{22}&\le\Theta_{kp,i}^{22}.\ea\right.
\end{align}
\end{theorem}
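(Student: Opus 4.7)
The plan rests on the observation that the semigroup $\{\op{B}_{k,i}\}_{k\ge 0}$ of Remark~\ref{rmk:semigroup-2} is commutative, so Theorem~\ref{thm:exponential} supplies two equivalent representations
\[
\Theta_{(k+1)p,i} \;=\; \Theta_{kp,i}\,\circledast\,\Theta_{p,i} \;=\; \Theta_{p,i}\,\circledast\,\Theta_{kp,i}.
\]
Unfolding both via the definition \er{eq:circledast} yields two distinct formulas for every block of $\Theta_{(k+1)p,i}$, and the four inequalities of \er{eq:mono-Theta} arise by choosing in each case the expansion that makes the sign of the subtracted term manifest.

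Specifically, the first form supplies
\[
\Theta_{(k+1)p,i}^{11} \;=\; \Theta_{kp,i}^{11} - \Theta_{kp,i}^{12}\bigl(\Theta_{kp,i}^{22}+\Theta_{p,i}^{11}\bigr)^{-1}\Theta_{kp,i}^{21},
\]
while the second form supplies
\[
\Theta_{(k+1)p,i}^{22} \;=\; \Theta_{kp,i}^{22} - \Theta_{kp,i}^{21}\bigl(\Theta_{p,i}^{22}+\Theta_{kp,i}^{11}\bigr)^{-1}\Theta_{kp,i}^{12}.
\]
Because $\Theta_{kp,i}$ is symmetric, $\Theta_{kp,i}^{21}=(\Theta_{kp,i}^{12})^T$, so the subtracted blocks are positive semidefinite whenever the bracketed middle matrices are positive definite, giving the $[11]$ and $[22]$ monotonicities in \er{eq:mono-Theta} at once. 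For the off-diagonal inequality, the first form also gives
\[
\Theta_{(k+1)p,i}^{12}\Theta_{(k+1)p,i}^{21} \;=\; \Theta_{kp,i}^{12}\,K_k\,\bigl(\Theta_{p,i}^{12}\Theta_{p,i}^{21}\bigr)\,K_k\,\Theta_{kp,i}^{21},
\]
with $K_k=(\Theta_{kp,i}^{22}+\Theta_{p,i}^{11})^{-1}$. Combining the hypothesis $\Theta_{p,i}^{12}\Theta_{p,i}^{21}\le\sigma I$ from \er{eq:ineq-Omega} with an inductively maintained lower bound $\Theta_{kp,i}^{22}+\Theta_{p,i}^{11}\ge\sqrt{\sigma}\,I$ (so that $\sigma K_k^2\le I$) and a standard congruence step yields $\Theta_{(k+1)p,i}^{12}\Theta_{(k+1)p,i}^{21}\le\Theta_{kp,i}^{12}\Theta_{kp,i}^{21}$; the $[21][12]$ bound is symmetric, using the other decomposition.

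The argument closes with an induction on $k$ that jointly propagates the monotonicity \er{eq:mono-Theta} together with the spectral invariants legitimising the inverses and the congruence bounds. The base case $k=1$ is immediate from \er{eq:ineq-Omega}, which forces $\lambda>\sqrt{\sigma}$ via \er{eq:rhosiglam}. The inductive step parallels the bookkeeping in the proof of Theorem~\ref{thm:conv-circled} via Lemma~\ref{lem:sequence}, but must be reworked for the one-step recursion $\tilde\Theta_{k+1}=\tilde\Theta_k\circledast\Theta_{p,i}$ rather than the self-doubling $\Omega_{k+1}=\Omega_k\circledast\Omega_k$. I expect the principal obstacle to lie precisely in redefining the auxiliary scalar sequence (roughly $\sigma_{k+1}=\sigma\,\lambda_k^{-2}\sigma_k$, $\lambda_{k+1}=\lambda_k-2\lambda_k^{-1}\sigma_k$) and re-verifying the analogues of the bounds \er{eq:rhosiglam} so as to secure a uniform positive lower bound on $\lambda_k$, and hence on $\Theta_{kp,i}^{22}+\Theta_{p,i}^{11}$, throughout the induction.
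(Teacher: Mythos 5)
Your algebraic skeleton coincides with the paper's: both exploit $\Theta_{(k+1)p,i}=\Theta_{p,i}\circledast\Theta_{kp,i}=\Theta_{kp,i}\circledast\Theta_{p,i}$, read off the $[11]$ and $[22]$ blocks from the expansion that exhibits the subtracted term as a congruence $\Theta_{kp,i}^{12}(\cdot)^{-1}\Theta_{kp,i}^{21}$ (hence positive semidefinite once the middle matrix is positive definite), and bound the off-diagonal products by sandwiching $\Theta_{p,i}^{12}\Theta_{p,i}^{21}\le\sigma I$ between symmetric factors $K_k=(\Theta_{kp,i}^{22}+\Theta_{p,i}^{11})^{-1}$. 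So far this is the paper's proof.

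The one place you genuinely diverge is also the one step you leave unexecuted: securing the uniform lower bound on $\Theta_{kp,i}^{22}+\Theta_{p,i}^{11}$ that legitimises the inverses and gives $\sigma K_k^2\le I$. You propose a fresh induction on a modified one-step scalar recursion ($\sigma_{k+1}=\sigma\lambda_k^{-2}\sigma_k$, $\lambda_{k+1}=\lambda_k-2\lambda_k^{-1}\sigma_k$) and flag the re-verification of \er{eq:rhosiglam}-type bounds as the ``principal obstacle.'' That route would in fact close: under $\lambda_k\ge\rho$ one gets $\sigma_{k+1}\le(\rho^{-2}\sigma)\sigma_k$, hence geometric decay and the same series bound $\sum_k\sigma_k\le\sigma(1-\rho^{-2}\sigma)^{-1}$ as in the proof of Lemma \ref{lem:sequence}, so \er{eq:rhosiglam} as stated already suffices. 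But the paper avoids this extra machinery entirely: having first proved the diagonal monotonicity $\Theta_{(k+1)p,i}^{jj}\le\Theta_{kp,i}^{jj}$ (which needs only $\Theta_{kp,i}^{22}+\Theta_{p,i}^{11}\ge\Theta_{kp,i}^{22}+\Theta_{kp,i}^{11}>0$, propagated inductively), it invokes Theorem \ref{thm:conv-circled} for the doubling subsequence started at $\Theta_{p,i}$ to get $\Theta_{2^kp,i}^{11}+\Theta_{2^kp,i}^{22}\ge\rho I$, and then uses $2^k\ge k$ together with the monotonicity just established to transfer this bound to every index: $\Theta_{kp,i}^{11}+\Theta_{kp,i}^{22}\ge\Theta_{2^kp,i}^{11}+\Theta_{2^kp,i}^{22}\ge\rho I$. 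Since $\Theta_{p,i}^{11}\ge\Theta_{kp,i}^{11}$, this yields $\Theta_{kp,i}^{22}+\Theta_{p,i}^{11}\ge\rho I$ with $\rho>\sqrt{\sigma}$, which is exactly what your congruence step needs. In short: your plan is sound and would work once the auxiliary sequence lemma is written out, but as submitted it has an acknowledged hole precisely where the paper substitutes a two-line argument piggybacking on Theorem \ref{thm:conv-circled}; either fill in the scalar-sequence induction or adopt the paper's $2^k\ge k$ device.
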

\begin{proof}
From Theorem \ref{thm:exponential}, the sequence $\{\Theta_{kp,i}\}_{k=1}^\infty$ satisfies $\Theta_{(k+1)p,i}=\Theta_{p,i}\circledast\Theta_{kp,i}, k\in\Z_{>0}$. From definition \er{eq:circledast} of $\circledast$, $\Theta_{(k+1)p,i}=\Theta_{p,i}\circledast\Theta_{kp,i}=\Theta_{kp,i}\circledast\Theta_{p,i}$. That is,
\begin{align}
\label{eq:thm-mono-prof-1}
\left[\ba{cc}\Theta_{(k+1)p,i}^{11}&\Theta_{(k+1)p,i}^{12}\\
\Theta_{(k+1)p,i}^{21}&\Theta_{(k+1)p,i}^{22}\ea\right]&=\left[\ba{cc}  \Theta_{p,i}^{11}-\Theta_{p,i}^{12}(\Theta_{p,i}^{22}+\Theta_{kp,i}^{11})^{-1}\Theta_{p,i}^{21}&-\Theta_{p,i}^{12}(\Theta_{p,i}^{22}+\Theta_{kp,i}^{11})^{-1}\Theta_{kp,i}^{12}\\-\Theta_{kp,i}^{21}(\Theta_{p,i}^{22}+\Theta_{kp,i}^{11})^{-1}\Theta_{p,i}^{21}&\Theta_{kp,i}^{22}-\Theta_{kp,i}^{21}(\Theta_{p,i}^{22}+\Theta_{kp,i}^{11})^{-1}\Theta_{kp,i}^{12}\ea\right]
\\\nn
            &=\left[\ba{cc}  \Theta_{kp,i}^{11}-\Theta_{kp,i}^{12}(\Theta_{kp,i}^{22}+\Theta_{p,i}^{11})^{-1}\Theta_{kp,i}^{21}&-\Theta_{kp,i}^{12}(\Theta_{kp,i}^{22}+\Theta_{p,i}^{11})^{-1}\Theta_{p,i}^{12}\\-\Theta_{p,i}^{21}(\Theta_{kp,i}^{22}+\Theta_{p,i}^{11})^{-1}\Theta_{kp,i}^{21}&\Theta_{p,i}^{22}-\Theta_{p,i}^{21}(\Theta_{kp,i}^{22}+\Theta_{p,i}^{11})^{-1}\Theta_{p,i}^{12}\ea\right].
\end{align}
With a view to applying an inductive argument to prove the $\Theta_{kp,i}^{11}$ and $\Theta_{kp,i}^{22}$ inequalities in \er{eq:mono-Theta}, note first that in the $k=1$ case,  \er{eq:thm-mono-prof-1} implies that
\begin{align}
\nn
\Theta_{2p,i}^{11}&=\Theta_{p,i}^{11}-\Theta_{p,i}^{12}(\Theta_{p,i}^{11}+\Theta_{p,i}^{22})^{-1}\Theta_{p,i}^{21}\le\Theta_{p,i}^{11},
\\\nn
\Theta_{2p,i}^{22}&=\Theta_{p,i}^{22}-\Theta_{p,i}^{21}(\Theta_{p,i}^{11}+\Theta_{p,i}^{22})^{-1}\Theta_{p,i}^{12}\le\Theta_{p,i}^{22},
\end{align}
where the assumption that $\Theta_{p,i}^{11}+\Theta_{p,i}^{22}\ge\lambda I>0$ is used. Assume that for any $k>1$,
\begin{align}
\label{eq:thm-mono-prof-2}
\Theta_{kp,i}^{11}\le\Theta_{(k-1)p,i}^{11}\le\cdots\le\Theta_{p,i}^{11}, \quad \Theta_{kp,i}^{22}\le\Theta_{(k-1)p,i}^{22}\le\cdots\le\Theta_{p,i}^{22}.
\end{align}
Then, from \er{eq:thm-mono-prof-1},
\begin{align}
\nn
\Theta_{kp,i}^{11}-\Theta_{(k+1)p,i}^{11}&=\Theta_{kp,i}^{12}(\Theta_{kp,i}^{22}+\Theta_{p,i}^{11})^{-1}\Theta_{kp,i}^{21}\ge\Theta_{kp,i}^{12}(\Theta_{p,i}^{22}+\Theta_{p,i}^{11})^{-1}\Theta_{kp,i}^{21}\ge0,
\\\label{eq:thm-mono-prof-3}
\Theta_{kp,i}^{22}-\Theta_{(k+1)p,i}^{22}&=\Theta_{kp,i}^{21}(\Theta_{p,i}^{22}+\Theta_{kp,i}^{11})^{-1}\Theta_{kp,i}^{12}\ge\Theta_{kp,i}^{21}(\Theta_{p,i}^{22}+\Theta_{p,i}^{11})^{-1}\Theta_{kp,i}^{12}\ge0.
\end{align}
This proves the inequalities for $\Theta_{kp,i}^{11}$ and $\Theta_{kp,i}^{22}$ of \er{eq:mono-Theta}. From Theorem \ref{thm:conv-circled}, $\Theta_{2^k p,i}^{11}+\Theta_{2^k p,i}^{22}\ge\rho\,I$ for any $k\in\Z_{>0}$. The proved inequalities of $\Theta_{(k+1)p,i}^{11}\le\Theta_{kp,i}^{11}$ and $\Theta_{(k+1)p,i}^{22}\le\Theta_{kp,i}^{22}$ for any $k\in\Z_{>0}$ in \er{eq:mono-Theta} imply that for $q\in\Z_{>0}$
\begin{align}
\label{eq:prof-extra}
\Theta_{(k+q)p,i}^{11}\le\Theta_{kp,i}^{11},\quad \Theta_{(k+q)p,i}^{22}\le\Theta_{kp,i}^{22}.
\end{align}
For any $k\in\Z_{>0}$, it holds $2^k>k$. Thus, $\hat q(k)\doteq 2^k-k\in\Z_{>0} $. Applying $\hat q(k)$ in \er{eq:prof-extra} yields
\begin{align}
\nn
\Theta_{kp,i}^{11}+\Theta_{kp,i}^{22}&\ge \Theta_{(k+\hat q(k))p,i}^{11}+\Theta_{(k+\hat q(k))p,i}^{22}
\\\label{eq:thm-mono-prof-4}
                                                           &=\Theta_{2^kp,i}^{11}+\Theta_{2^kp,i}^{22}
                                                          \\\nn
                                                          &\ge\rho\,I.
\nn
\end{align}
To show the inequalities of $\Theta_{kp,i}^{12}\Theta_{kp,i}^{21}$ and $\Theta_{kp,i}^{21}\Theta_{kp,i}^{12}$ in \er{eq:mono-Theta}, using inequality \er{eq:rhosiglam}, \er{eq:ineq-Omega}, \er{eq:thm-mono-prof-1}, \er{eq:thm-mono-prof-2} and \er{eq:thm-mono-prof-4},
\begin{align}
\nn
\Theta_{(k+1)p,i}^{12}\Theta_{(k+1)p,i}^{21}&=\Theta_{kp,i}^{12}(\Theta_{kp,i}^{22}+\Theta_{p,i}^{11})^{-1}\Theta_{p,i}^{12}\Theta_{p,i}^{21}(\Theta_{kp,i}^{22}+\Theta_{p,i}^{11})^{-1}\Theta_{kp,i}^{21}
\\\nn
                                            &\le\sigma\Theta_{kp,i}^{12}(\Theta_{kp,i}^{22}+\Theta_{kp,i}^{11})^{-1}(\Theta_{kp,i}^{22}+\Theta_{kp,i}^{11})^{-1}\Theta_{kp,i}^{21}
                                            \\\nn
                                            &\le \sigma\rho^{-2}\Theta_{kp,i}^{12}\Theta_{kp,i}^{21}
                                            \\\nn
                                            &<\Theta_{kp,i}^{12}\Theta_{kp,i}^{21}.
\end{align}
A similar argument shows that $\Theta_{(k+1)p,i}^{21}\Theta_{(k+1)p,i}^{12}\le\Theta_{kp,i}^{21}\Theta_{kp,i}^{12}$.
\end{proof}

Combining Theorem \ref{thm:conv-circled} and Theorem \ref{thm:mono-Theta}, the convergence of the sequence $\{\Theta_{k,i}\}_{k=1,n}^\infty$ that characterizes the kernels $\mathrm{B}_{k,i}, k\in\Z_{>0}$, of \er{eq:dual-fund-1} can be proved. Two cases are considered separately. The first is for the sequence $\{\Theta_{k,i}\}_{k=1}^\infty$ initialized with $\Theta_{1,i}$ from \er{eq:Theta-1-1}, \er{eq:Theta-1-2}, or \er{eq:Q-1-3}, while the second one is for the sequence $\{\Theta_{k,3}\}_{k=n}^\infty$, initialized with $\Theta_{n,3}$ from \er{eq:Theta-1-5}.

\begin{theorem}
\label{thm:conv-Theta_k-1}
Fix any $i\in\{1,2,3\}$ and constants  $\sigma>0,\lambda>0,\rho>0$ such that \er{eq:rhosiglam} holds.

\begin{enumerate}

\item
\label{thm:conv-Themta-item1}
Suppose that inequality \er{eq:ineq-Omega} holds for the matrices $\Theta_{1,i}, i\in\{1,2,3\}$ of \er{eq:Theta-1-1}, \er{eq:Theta-1-2}, or \er{eq:Q-1-3}. Then, the matrix sequence $\{\Theta_{k,i}\}_{k=1}^\infty$ of \er{eq:quad-fund-dual} converges to a block diagonal matrix $\Theta_{\infty,i}=\text{diag}(\Theta_{\infty,i}^{11},\Theta_{\infty,i}^{22})$ such that $\Theta_{\infty,i}^{11}+\Theta_{\infty,i}^{22}\ge\rho\,I$.

\item
\label{thm:conv-Themta-item2}
Suppose that inequality \er{eq:ineq-Omega} holds for the matrix $\Theta_{n,3}$ of \er{eq:Theta-1-5}. Then, the matrix sequence $\{\Theta_{k,3}\}_{k=n}^\infty$ of \er{eq:quad-fund-dual} initialized with $\Theta_{n,3}$ of \er{eq:Theta-1-5} converges to a block diagonal matrix $\Theta_{\infty,i}=\text{diag}(\Theta_{\infty,i}^{11},\Theta_{\infty,i}^{22})$ such that $\Theta_{\infty,i}^{11}+\Theta_{\infty,i}^{22}\ge\rho\,I$.
\end{enumerate}
\end{theorem}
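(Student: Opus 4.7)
The plan is to combine the doubling-subsequence convergence of Theorem \ref{thm:conv-circled} with the Loewner-order monotonicity of Theorem \ref{thm:mono-Theta} to force convergence of the full matrix sequence in each of the two cases.

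For part \ref{thm:conv-Themta-item1}, I first apply Theorem \ref{thm:conv-circled} with $\Omega = \Theta_{1,i}$. By the semigroup identity $\Theta_{k_1+k_2,i} = \Theta_{k_1,i}\circledast\Theta_{k_2,i}$ of Theorem \ref{thm:exponential}, the doubling recursion $\Omega_{k+1} = \Omega_k \circledast \Omega_k$ started at $\Omega_1 = \Theta_{1,i}$ produces exactly the subsequence $\Omega_k = \Theta_{2^{k-1},i}$. Hence this subsequence converges to a block-diagonal limit $\Theta_{\infty,i} = \mathrm{diag}(\Theta_{\infty,i}^{11},\Theta_{\infty,i}^{22})$ satisfying $\Theta_{\infty,i}^{11}+\Theta_{\infty,i}^{22} \ge \rho\,I$, with the off-diagonal blocks tending to zero. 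I then apply Theorem \ref{thm:mono-Theta} with $p=1$ to obtain that the full sequences $\{\Theta_{k,i}^{11}\}$ and $\{\Theta_{k,i}^{22}\}$ are decreasing in the Loewner order, and that $\{\Theta_{k,i}^{12}\Theta_{k,i}^{21}\}$ and $\{\Theta_{k,i}^{21}\Theta_{k,i}^{12}\}$ are also decreasing.

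I then combine these facts via the observation that a Loewner-decreasing sequence of symmetric matrices that admits a convergent subsequence must converge to the same limit. Concretely, for any $k$, choosing $j$ with $2^{j-1}\le k \le 2^{j}$ gives $\Theta_{2^{j},i}^{11} \le \Theta_{k,i}^{11} \le \Theta_{2^{j-1},i}^{11}$, and both bounds tend to $\Theta_{\infty,i}^{11}$ as $k\to\infty$. This delivers $\Theta_{k,i}^{11} \to \Theta_{\infty,i}^{11}$ and analogously $\Theta_{k,i}^{22} \to \Theta_{\infty,i}^{22}$. The same sandwich applied to $\{\Theta_{k,i}^{12}\Theta_{k,i}^{21}\}$ gives $\Theta_{k,i}^{12}\Theta_{k,i}^{21} \to 0$, and the symmetry $(\Theta_{k,i}^{12})^T = \Theta_{k,i}^{21}$ inherited from \er{eq:quad-fund-dual} yields $\|\Theta_{k,i}^{12}\|_2^2 = \|\Theta_{k,i}^{12}\Theta_{k,i}^{21}\|_2 \to 0$, so $\Theta_{k,i}^{12},\Theta_{k,i}^{21} \to 0$. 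Together with the lower bound from Theorem \ref{thm:mono-Theta}, this proves the stated convergence.

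Part \ref{thm:conv-Themta-item2} is handled by the identical template with $\Theta_{n,3}$ in place of $\Theta_{1,i}$. Applying Theorem \ref{thm:conv-circled} to $\Omega = \Theta_{n,3}$ yields convergence of the subsequence $\{\Theta_{n\cdot 2^{k-1},3}\}_{k=1}^\infty$, Theorem \ref{thm:mono-Theta} with $p=n$ supplies the requisite Loewner-monotonicity for the subsequence $\{\Theta_{kn,3}\}_{k=1}^\infty$, and the same sandwich and spectral-norm arguments deliver the block-diagonal limit with the stated lower bound. The main technical obstacle is this matrix-monotonicity convergence step: passing from the doubling-subsequence limit to the full-sequence limit requires the Loewner sandwich for the diagonal blocks and an additional spectral-norm bound, based on the symmetry of $\Theta_{k,i}$, for the off-diagonal blocks; beyond that, everything follows directly from the prior theorems.
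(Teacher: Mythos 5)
Your part \ref{thm:conv-Themta-item1}) is correct and is essentially the paper's argument: Theorem \ref{thm:conv-circled} gives convergence of the doubling subsequence $\{\Theta_{2^k,i}\}$, Theorem \ref{thm:mono-Theta} with $p=1$ gives Loewner-monotonicity of the full sequence, and the sandwich $\Theta_{2^{j},i}^{11}\le\Theta_{k,i}^{11}\le\Theta_{2^{j-1},i}^{11}$ for $2^{j-1}\le k\le 2^{j}$ (together with $\|\Theta_{k,i}^{12}\|_2^2=\|\Theta_{k,i}^{12}\Theta_{k,i}^{21}\|_2$) forces the whole sequence to the same block-diagonal limit. You make the monotone-plus-convergent-subsequence step more explicit than the paper does, which is fine.

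Part \ref{thm:conv-Themta-item2}) has a genuine gap. Your ``identical template'' only controls the arithmetic progression $\{\Theta_{kn,3}\}_{k=1}^\infty$: Theorem \ref{thm:mono-Theta} with $p=n$ yields monotonicity along multiples of $n$ only, and your sandwich therefore upgrades the doubling subsequence $\{\Theta_{n2^{k-1},3}\}$ to convergence of $\{\Theta_{kn,3}\}$ --- but the theorem asserts convergence of the \emph{full} sequence $\{\Theta_{k,3}\}_{k=n}^\infty$, including the $n-1$ residue classes $kn+j$, $j=1,\dots,n-1$. You have no monotonicity or comparison relating $\Theta_{k,3}$ for $k\not\equiv 0 \pmod n$ to the multiples of $n$ (and you cannot fall back on $p=1$, since the hypothesis \er{eq:ineq-Omega} is only assumed for $\Theta_{n,3}$, and in the $m<n$ case $\Theta_{1,3}$ is not even a finite quadratic because $\Lambda_1(x,z)$ may be empty). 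The paper closes this by an entirely separate argument: it passes to the primal via $Q_{kn,3}=-\Theta_{kn,3}$, propagates one step at a time through the Riccati-type recursion \er{eq:dynamics-Q} to generate $\{Q_{kn+j,3}\}_{k}$ for each $j$, identifies the limit of each such subsequence with $Q_{\infty,3}$ by showing $Q_{\infty,3}$ is a fixed point of \er{eq:dynamics-Q} (using that $Q_{\infty,3}^{11}$ is the stabilizing ARE solution, a fact imported from \cite{ZD4:13}), and only then interleaves the $n$ residue-class subsequences. You need some version of this continuity/fixed-point step; without it, part \ref{thm:conv-Themta-item2}) is not proved.
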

\begin{proof}
{\it \ref{thm:conv-Themta-item1}):} From Theorem \ref{thm:conv-circled}, the subsequence $\{\Theta_{2^k,i}\}_{k=1}^\infty $ initialized from the matrices $\Theta_{1,i}, i\in\{1,2,3\}$ of \er{eq:Theta-1-1}, \er{eq:Theta-1-2}, or \er{eq:Q-1-3}, converges to a block diagonal matrix $\Theta_{\infty, i}$ as $k\rightarrow\infty$. Thus, $\Theta_{2^k,i}^{12}\Theta_{2^k,i}^{21}\rightarrow0$ and $\Theta_{2^k,i}^{21}\Theta_{2^k,i}^{12}\rightarrow0$, and $\Theta_{2^k,i}^{11}+\Theta_{2^k,i}^{22}\rightarrow\Theta_{\infty,i}^{11}+\Theta_{\infty,i}^{22}\ge\rho\,I$ as $k\rightarrow\infty$. Applying the inequality \er{eq:mono-Theta} in Theorem \ref{thm:mono-Theta} for $p=1$ leads to
$\Theta_{k+1,i}^{11}\le\Theta_{k,i}^{11},\Theta_{k+1,i}^{22}\le\Theta_{k,i}^{22}$, $\Theta_{k+1,i}^{12}\Theta_{k+1,i}^{21}\le\Theta_{k,i}^{12}\Theta_{k,i}^{21}$  and $\Theta_{k+1,i}^{21}\Theta_{k+1,i}^{12}\le\Theta_{k,i}^{21}\Theta_{k,i}^{12}$ for all $k\in\Z_{>0}$. Thus,  $\Theta_{k,i}^{12}\Theta_{k,i}^{21}\rightarrow0$ and $\Theta_{k,i}^{21}\Theta_{k,i}^{12}\rightarrow0$, and $\Theta_{k,i}^{11}+\Theta_{k,i}^{22}\rightarrow\Theta_{\infty,i}^{11}+\Theta_{\infty,i}^{22}$ as $k\rightarrow\infty$.

\vspace{0.5cm}

{\it \ref{thm:conv-Themta-item2}):} Applying Theorem \ref{thm:mono-Theta} for $p=n$ and adopting a similar argument as in the proof of {\it  \ref{thm:conv-Themta-item1})} above proves that the subsequence $\Theta_{kn,3}\rightarrow\Theta_{\infty,3}$ as $k\rightarrow\infty$, with $\Theta_{\infty,3}=\text{diag}(\Theta_{\infty,3}^{11},\Theta_{\infty,3}^{22})$. According to \er{eq:Gamma}, the sequence $\{Q_{k,3}\}_{k=n}^{\infty}$ of \er{eq:quad-fund-primal} is related to $\{\Theta_{k,3}\}_{k=n}^\infty$ by $Q_{k,3}=-\Theta_{k,3}, k\ge n, k\in\Z_{>0}$. Thus, $Q_{kn,3}\rightarrow Q_{\infty,3}\doteq-\Theta_{\infty,3}$ as $k\rightarrow\infty$. The subsequence $\{Q_{kn+1,3}\}_{k=1}^\infty$ can be obtained  by applying iterations \er{eq:dynamics-Q} of Theorem \ref{thm:dynamics-Q}, with $Q_{kn,3}$ replacing $Q_{k,i}$ in the right-hand side, that is,
\begin{align}
\nn
Q_{kn+1,3}^{11}&=\Phi+A^TQ_{kn,3}^{11}A+A^TQ_{kn,3}^{11}B(\gamma^2I-B^TQ_{kn,3}^{11}B)^{-1}B^TQ_{kn,3}^{11}A,
\\\nn
Q_{kn+1,3}^{12}&=A^TQ_{kn,3}^{12}+A^TQ_{k,3}^{11}B(\gamma^2I-B^TQ_{kn,3}^{11}B)^{-1}B^TQ_{kn,3}^{12},
\\\label{eq:dynamics-Q-1}
Q_{kn+1,3}^{21}&=(Q_{kn+1,3}^{12})^T,
\\\nn
Q_{kn+1,3}^{22}&=Q_{kn,3}^{22}+Q_{kn,3}^{21}B(\gamma^2I-B^TQ_{kn,3}^{11}B)^{-1}B^TQ_{kn,3}^{12}.
\end{align}
Suppose that $Q_{kn+1,3}\rightarrow \widehat{Q}, k\rightarrow \infty$. Sending $k\rightarrow\infty$ in both sides of \er{eq:dynamics-Q-1} yields
\begin{align}
\nn
\widehat{Q}^{11}&=\Phi+A^TQ_{\infty,3}^{11}A+A^TQ_{\infty,3}^{11}B(\gamma^2I-B^TQ_{\infty,3}^{11}B)^{-1}B^TQ_{\infty,3}^{11}A,
\\\nn
\widehat{Q}^{12}&=A^TQ_{\infty,3}^{12}+A^TQ_{\infty,3}^{11}B(\gamma^2I-B^TQ_{\infty,3}^{11}B)^{-1}B^TQ_{\infty,3}^{12},
\\\label{eq:dynamics-limit}
\widehat{Q}^{21}&=(\widehat{Q}^{12})^T,
\\\nn
\widehat{Q}^{22}&=Q_{\infty,3}^{22}+Q_{\infty,3}^{21}B(\gamma^2I-B^TQ_{\infty,3}^{11}B)^{-1}B^TQ_{\infty,3}^{12}.
\end{align}
Since $Q_{\infty,3}^{12}=Q_{\infty,3}^{21}=0$, it is immediate from the second and third equation of \er{eq:dynamics-limit} that $\widehat{Q}^{12}=\widehat{Q}^{21}=0$. In a recent paper \cite{ZD4:13}, it has been established that $Q_{\infty,3}^{11}$ is the stabilising solution (minimum solution) of the Algebraic Riccati Equation (ARE)
\begin{align} 
\nn
P=\Phi+A^TPA+A^TPB(\gamma^2I-B^TPB)^{-1}B^TPA.
\end{align}
That is, $\gamma^2\,I-B^TQ_{\infty,3}^{11}B>0$ and
\begin{align}
\nn
Q_{\infty,3}^{11}=\Phi+A^TQ_{\infty,3}^{11}A+A^TQ_{\infty,3}^{11}B(\gamma^2I-B^TQ_{\infty,3}^{11}B)^{-1}B^TQ_{\infty,3}^{11}A.
\end{align}  
Thus, the first and fourth equations of \er{eq:dynamics-limit} imply that $\widehat{Q}^{11}=Q_{\infty,3}^{11}$ and $\widehat{Q}^{22}=Q_{\infty,3}^{22}$. This shows that $\widehat{Q}= Q_{\infty,3}$. Hence, the convergence of $Q_{kn,3}\rightarrow Q_{\infty,3}, k\rightarrow\infty$ implies that  $Q_{kn+1,3}\rightarrow Q_{\infty,3}$ as $k\rightarrow \infty$. In a similar way, subsequences $\{Q_{kn+j+1,3}\}_{k=1}^\infty, j=1,2,\cdots,n-2$ can be generated from $\{Q_{kn+j,3}\}_{k=1}^\infty, j=1,2,\cdots,n-2$ by using \er{eq:dynamics-Q} iteratively with respect to $j$. These $n-1$ subsequences each converge to $Q_{\infty,3}$. Consequently, the corresponding $n$ subsequences $\{\Theta_{kn+j,3}\}_{k=1}^\infty$ converge to $\Theta_{\infty,3}, j=0,1,2,\cdots,n-1$ as $k\rightarrow\infty$. Define $\iota_k\doteq\max_{j\in\{0,1,\cdots,n-1\}}\{\|(\Theta_{kn+j,3}^{11}+\Theta_{kn+j,3}^{22})-(\Theta_{\infty,3}^{11}+\Theta_{\infty,3}^{22})\|_2\}$, $\eta_k^1\doteq\max_{j\in\{0,1,\cdots,n-1\}}\{||\Theta_{kn+j,3}^{12}||_2\}$, and $\eta_k^2\doteq\max_{j\in\{0,1,\cdots,n-1\}}\{||\Theta_{kn+j,3}^{21}||_2\}$, all for $k\in\Z_{>0}$.
The convergence of subsequences $\{\Theta_{kn+j,3}\}_{k=1}^\infty$ to $\Theta_{\infty,3}$ for all $j\in\{0,1,2,\cdots,n-1\}$ implies that $\eta_k^1\rightarrow0, \eta_k^2\rightarrow0,\iota_{k}\rightarrow0$ as $k\rightarrow\infty$. For any $k\in\Z_{>0}$, let $\chi(k)\doteq\lfloor \,\frac{k}{n}\,\rfloor$.
Thus, for the sequence $\{\Theta_{k,3}\}_{k=n}^\infty$,
$||\Theta_{k,3}^{12}||_2\le\eta_{\lfloor \frac{k}{n}\rfloor}^1\rightarrow0$. Similarly, $||\Theta_{k,3}^{21}||_2\le\eta_{\lfloor \frac{k}{n}\rfloor}^2\rightarrow0$ and $||(\Theta_{k,3}^{11}+\Theta_{k,3}^{22})-(\Theta_{\infty,3}^{11}+\Theta_{\infty,3}^{22})||_2\le\iota_{\lfloor \frac{k}{n}\rfloor}\rightarrow0$. This proves the convergence of the sequence $\{\Theta_{k,3}\}_{k=n}^\infty$.
\end{proof}

\subsection{Convergence of the infinite horizon linear regulator problem}

When $\Theta_{k,i}\rightarrow\Theta_{\infty,i}, k\rightarrow\infty$, with the limit being block diagonal $\Theta_{\infty,i}=\text{diag}(\Theta_{\infty,i}^{11},\Theta_{\infty,i}^{22})$, $\Theta_{\infty,i}^{11}+\Theta_{\infty,i}^{22}>0$, the matrices $Q_{k,i}=\Gamma^i(\Theta_{k,i})\rightarrow{Q}_{\infty,i}=\Gamma^i({\Theta}_{\infty,i})$. From the definition of $\Gamma^i$ in \er{eq:Gamma}, $Q_{\infty,i}=\Gamma^i({\Theta}_{\infty,i})$ takes the form
\begin{align}
\nn
Q_{\infty,1}&=\text{diag}\left((\Theta_{\infty,1}^{11})^{-1},-\Theta_{\infty,1}^{22}\right),
\\\nn
Q_{\infty,2}&=\text{diag}(M(\Theta_{\infty,2}^{11}+M)^{-1}M-M,-\Theta_{\infty,2}^{22} ),
\\\nn
Q_{\infty,3}&=\text{diag}(-\Theta_{\infty,3}^{11},-\Theta_{\infty,3}^{22}).
\end{align}
The limit of $\mathrm{S}_{k,i}$ in \er{eq:kernel-primal} takes the form
\begin{align}
\nn
\mathrm{S}_{\infty,i}(x,z)=\frac{1}{2}\left[\ba{c}x\\z\ea\right]^{T}Q_{\infty,i} \left[\ba{c}x\\z\ea\right]=\frac{1}{2}(x^TQ_{\infty,i}^{11} x+z^TQ_{\infty,i}^{22} z).
\end{align}
Using the convergence of $\{\mathrm{S}_{k,i}\}_{k=1}^{\infty}$, a convergence result for the sequence of value functions $\{W_k\}_{k=0}^\infty$ of \er{eq:value} can be obtained by employing the representation \er{eq:fund-solu}.

\begin{theorem}
\label{thm:limit}
Suppose that (i) the sequence $\{Q_{k,i}\}_{k=1}^\infty$ defining the functions $\{\mathrm{S}_{k,i}\}_{k=1}^\infty$ of \er{eq:kernel-primal} satisfies $Q_{k,i}\rightarrow Q_{\infty,i}, k\rightarrow\infty$ with $Q_{\infty,i}=\text{diag}(Q_{\infty,i}^{11},Q_{\infty,i}^{22})$, (ii) the dual of the terminal payoff $\widehat{\Psi}^i(z)\doteq(\op{D}_{\psi^i}\Psi)(z),z\in\R^n$, is continuous, and (iii) there exist $r_0>0,\eps_0>0$ such that
\begin{align}
\label{ineq:thm-Psi}
\widehat{\Psi}^i(z)\le -\textstyle{\frac{1}{2}}z^T (Q_{\infty,i}^{22}+\eps_0 I)z,~\forall~|z|>r_0.
\end{align}
Then, $W_k(x)\rightarrow W_\infty(x),x\in\R^n$, where $W_\infty(x)$ is given by
\begin{align}
\label{eq:infty-value}
W_\infty(x)\doteq\textstyle{\frac{1}{2}}x^TQ_{\infty,i}^{11}x+\kappa,~~\text{with}~
\kappa\doteq\int_{\R^n}^\oplus\widehat{\Psi}^i(z)\otimes \left(\textstyle{\frac{1}{2}}z^TQ_{\infty,i}^{22}z\right)\,dz.
\end{align}
\end{theorem}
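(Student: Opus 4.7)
The plan is to exploit the max-plus fundamental solution representation \er{eq:fund-solu}, rewritten via the definition of the max-plus integral as
\[
W_k(x)\ =\ \sup_{z\in\R^n}\bigl\{\mathrm{S}_{k,i}(x,z) + \widehat{\Psi}^i(z)\bigr\},
\]
and to pass to the limit $k\to\infty$ inside the supremum. Because the limiting kernel $\mathrm{S}_{\infty,i}(x,z)=\tfrac{1}{2}(x^T Q_{\infty,i}^{11}x + z^T Q_{\infty,i}^{22}z)$ is separable in $x$ and $z$, the analogous expression with $\mathrm{S}_{\infty,i}$ in place of $\mathrm{S}_{k,i}$ factors to give exactly $W_\infty(x)=\tfrac{1}{2}x^TQ_{\infty,i}^{11}x+\kappa$ as in \er{eq:infty-value}. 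The substance of the proof is therefore the justification for interchanging the limit $k\to\infty$ with the supremum over $z$.

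First I would localize the supremum to a compact ball $B_R\doteq\{z\in\R^n: |z|\le R\}$ uniformly in $k$ for $k$ large. For $|z|>r_0$, the growth hypothesis \er{ineq:thm-Psi} yields
\[
\mathrm{S}_{k,i}(x,z)+\widehat{\Psi}^i(z)\ \le\ \tfrac{1}{2}x^TQ_{k,i}^{11}x + x^TQ_{k,i}^{12}z + \tfrac{1}{2}z^T\bigl(Q_{k,i}^{22}-Q_{\infty,i}^{22}-\eps_0 I\bigr)z.
\]
Since $Q_{k,i}\to Q_{\infty,i}$ with limit block diagonal, $Q_{k,i}^{12}\to 0$ and $Q_{k,i}^{22}\to Q_{\infty,i}^{22}$. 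Hence there exists $k_0=k_0(x)$ such that for all $k\ge k_0$, $Q_{k,i}^{22}-Q_{\infty,i}^{22}-\eps_0 I\le -\tfrac{\eps_0}{2}I$ and $\|Q_{k,i}^{12}\|_2\le 1$, and consequently
\[
\mathrm{S}_{k,i}(x,z)+\widehat{\Psi}^i(z)\ \le\ C(x) + |x|\,|z| - \tfrac{\eps_0}{4}|z|^2
\]
for a constant $C(x)\in\R$ not depending on $k$. The right-hand side tends to $-\infty$ as $|z|\to\infty$, so one may select $R=R(x,\eps_0)$ for which the restriction of the supremum to $\{|z|>R\}$ is strictly dominated by the value of the integrand at any fixed point $z^*$ where $\widehat{\Psi}^i(z^*)$ is finite (such a $z^*$ exists unless $\widehat{\Psi}^i\equiv-\infty$, in which case the theorem is trivial).

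On the compact ball $B_R$ the convergence $Q_{k,i}\to Q_{\infty,i}$ delivers uniform convergence $\mathrm{S}_{k,i}(x,\cdot)\to\mathrm{S}_{\infty,i}(x,\cdot)$, and continuity of $\widehat{\Psi}^i$ from hypothesis (ii) makes the integrand continuous on $B_R$. A standard uniform-convergence argument then gives
\[
\sup_{z\in B_R}\bigl\{\mathrm{S}_{k,i}(x,z)+\widehat{\Psi}^i(z)\bigr\}\ \longrightarrow\ \sup_{z\in B_R}\bigl\{\mathrm{S}_{\infty,i}(x,z)+\widehat{\Psi}^i(z)\bigr\}
\]
as $k\to\infty$. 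Repeating the localization argument with $\mathrm{S}_{\infty,i}$ in place of $\mathrm{S}_{k,i}$ identifies the right-hand side with $W_\infty(x)$, completing the proof.

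The principal technical obstacle is this uniform localization. The kernel $\mathrm{S}_{k,i}$ itself carries an $O(|z|^2)$ contribution via $Q_{k,i}^{22}$ and a linear cross contribution via $Q_{k,i}^{12}$, and these have to be dominated by the $\eps_0|z|^2$ margin supplied by \er{ineq:thm-Psi} simultaneously for all sufficiently large $k$. Block-diagonal convergence of $Q_{k,i}$ is exactly what makes this possible, but the argument must be organised so that both the truncation radius $R$ and the threshold index $k_0$ depend only on $x$ and the fixed constants $r_0,\eps_0$, and not on $k$.
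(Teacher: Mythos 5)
Your proposal is correct and follows essentially the same route as the paper's proof: both decompose $W_k$ via the fundamental solution representation, use hypothesis (iii) together with $Q_{k,i}^{12}\to 0$ and $Q_{k,i}^{22}\to Q_{\infty,i}^{22}$ to localize the supremum over $z$ to a compact ball uniformly for all sufficiently large $k$, and then pass to the limit by uniform convergence of the (quadratic-plus-$\widehat{\Psi}^i$) integrand on that ball. The only difference is cosmetic bookkeeping (the paper extracts the $\tfrac{1}{2}x^TQ_{k,i}^{11}x$ term before taking the supremum, while you keep it inside).
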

\begin{proof}
Fix any $x\in\R^n$. From \er{eq:fund-solu},
\begin{align}
\nn
W_k(x)&=\int_{\R^n}^\oplus \widehat{\Psi}^i(z)\otimes\left({\frac{1}{2}}\left[\ba{c}x\\z\ea\right]^{T}Q_{k,i} \left[\ba{c}x\\z\ea\right]\right)\,dz
\\\nn
      &=\textstyle{\frac{1}{2}}x^TQ_{k,i}^{11}x+\int_{\R^n}^\oplus \widehat{\Psi}^i(z)\otimes\left(\textstyle{\frac{1}{2}}z^TQ_{k,i}^{22}z\right)\otimes \left(x^TQ_{k,i}^{12}z\right)\,dz
\\\nn
      &=\textstyle{\frac{1}{2}}x^TQ_{k,i}^{11}x+\ds{\sup_{z\in\R^n}}\{f_{k,i}^x(z)\}
\end{align}
where $f_{k,i}^x:\R^n\rightarrow\R$ is
$
f_{k,i}^x(z)\doteq\widehat{\Psi}^i(z)+\textstyle{\frac{1}{2}}z^TQ_{k,i}^{22}z+ x^TQ_{k,i}^{12}z.
$
By assumption (i), $Q_{k,i}\rightarrow Q_{\infty,i}=\text{diag}(Q_{\infty,i}^{11},Q_{\infty,i}^{22})$, Theorem \ref{thm:limit} is proved if it is shown that
\begin{align}
\label{prof:thm-limit-1}
\lim_{k\rightarrow\infty}\ds{\sup_{z\in\R^n}}\left\{f_{k,i}^x(z)\right\}=\kappa.
\end{align}
To prove \er{prof:thm-limit-1}, it is first shown that there exists $K\in\Z_{>0}, \bar{r}\in\R_{>0}$ such that
\begin{align}
\label{prof:thm-limit-2}
\sup_{z\in\R^n}\{f_{k,i}^x(z)\}=\max_{|z|\le \bar{r}}\{f_{k,i}^x(z)\}, \quad \forall ~k\ge K.
\end{align}
Since $Q_{k,i}^{22}\rightarrow Q_{\infty,i}^{22}$ and $Q_{k,i}^{12}\rightarrow0$ by assumption (i), there exists $K\in\Z_{>0}$ and $r_1\ge r_0$ such that
\begin{align}
\label{prof:thm-limit-3}
Q_{k,i}^{22}-Q_{\infty,i}^{22}\le\frac{1}{2}\eps_0\,I, \quad |Q_{k,i}^{12}x|\le \frac{1}{8}\eps_0 r_1
\end{align}
for all $k\ge K$. Then, for any $r\ge  r_1$,
\begin{align}
\nn
\sup_{|z|>r}\{f_{k,i}^x(z)\}&=\sup_{|z|>r}\{\widehat{\Psi}^i(z)+\textstyle{\frac{1}{2}}z^TQ_{k,i}^{22}z+ x^TQ_{k,i}^{12}z\}
\\\nn
                            &\le\sup_{|z|>r}\{-\textstyle{\frac{1}{2}}z^T (Q_{\infty,i}^{22}+\eps_0 I)z+\textstyle{\frac{1}{2}}z^TQ_{k,i}^{22}z+ x^TQ_{k,i}^{12}z\}
\\\nn
                            &\le\sup_{|z|>r}\{\ts{\frac{1}{2}}z^T(Q_{k,i}^{22}-Q_{\infty,i}^{22}) z-\ts{\frac{1}{2}}\eps_0\,z^Tz+ x^TQ_{k,i}^{12}z\}
\\\nn
                            &\le\sup_{|z|>r}\{-\ts{\frac{1}{4}}\eps_0\,z^Tz+x^TQ_{k,i}^{12}z\}
\\\nn
                            &=r\,|Q_{k,i}^{21}x|-\ts{\frac{1}{4}}\eps_0\,r^2\le r\,\ts{\frac{1}{8}}\eps_0\,r-\ts{\frac{1}{4}}\eps_0\,r^2=-\ts{\frac{1}{8}}\eps_0\,r^2,
\end{align}
where the first inequality follows by (iii), the second inequality follows by inspection, and the third inequality follows by the left-hand inequality of \er{prof:thm-limit-3}. Thus, there exists $\bar{r}\ge r_1$ such that
$
\sup_{|z|>\bar{r}}\{f_{k,i}^x(z)\}\le\max_{|z|\le r_0}\{f_{k,i}^x(z)\}.
$
Then, it follows
\begin{align}
\sup_{z\in\R^n}\{f_{k,i}^x(z)\}&=\max\left\{\max_{|z|\le r_0}\{f_{k,i}^x(z)\}, \max_{r_0<|z|\le \bar{r}}\{f_{k,i}^x(z)\}, \sup_{|z|> \bar{r}}\{f_{k,i}^x(z)\}\right\}
\\\nn
                               &=\max\left\{\max_{|z|\le r_0}\{f_{k,i}^x(z)\}, \max_{r_0<|z|\le \bar{r}}\{f_{k,i}^x(z)\}\right\}=\max_{|z|\le \bar{r}}\{f_{k,i}^x(z)\}.
\end{align}
Hence, \er{prof:thm-limit-2} is proved. This, together with the continuity of $\widehat\Psi^i$, implies that the maximizing points $z_k^\ast(x)\doteq \arg\max_{x\in\R^n}\{f_{k,i}^x(z)\}$ exist and are uniformly bounded for $k\ge K$.

Next it is shown that the sequence of functions $f_{k,i}^x$ uniformly converges to $f_{\infty,i}(z)\doteq \widehat{\Psi}^i(z)+ \textstyle{\frac{1}{2}}z^TQ_{\infty,i}^{22}z, \forall z\in\R^n$ on set $B_{\bar{r}}=\{z\in\R^n \,|\,|z|\le \bar{r}\}$. For any $k\in\Z_{>0}$,
\begin{align}
\max_{z\in B_{\bar{r}}}|f_{k,i}^x(z)-f_{\infty,i}(z)|&=\max_{z\in B_{\bar{r}}}| \ts{\frac{1}{2}}z^T(Q_{k,i}^{22}-Q_{\infty,i}^{22})z+x^T Q_{k,i}^{12}\,z|
\\\nn
                                                     &\le\max_{z\in B_{\bar{r}}}| \ts{\frac{1}{2}}z^T(Q_{k,i}^{22}-Q_{\infty,i}^{22})z|+\max_{z\in B_{\bar{r}}}|x^T Q_{k,i}^{12}\,z|
\\\nn
                                                     &=\ts{\frac{1}{2}}\bar{r}^2\,||Q_{k,i}^{22}-Q_{\infty,i}^{22}||_2^2+\bar{r}\,|Q_{k,i}^{21}x|\rightarrow0,
\end{align}
which proves the uniform convergence of the sequence $\{f_{k,i}^x\}_{k=1}^\infty$ to $f_{\infty,i}$ on $B_{\bar{r}}$. \er{prof:thm-limit-1} follows by
\begin{align}
\nn
\lim_{k\rightarrow\infty}\ds{\sup_{z\in\R^n}}\left\{f_{k,i}^x(z)\right\}&=\lim_{k\rightarrow\infty}\max_{z\in B_{\bar{r}}}\left\{f_{k,i}^x(z)\right\}=\max_{z\in B_{\bar{r}}}\lim_{k\rightarrow\infty}\left\{f_{k,i}^x(z)\right\}
=\max_{z\in B_{\bar{r}}}\{f_{\infty,i}(z)\}=\kappa,
\end{align}
where finiteness of $\kappa$ follows by (ii).
\end{proof}

\section{Examples}
\label{sec:exam}

The computational method of Section \ref{sec:comp-method} is illustrated via three examples.

For the purposes of benchmarking, the first example employs a quadratic terminal payoff, and so is a standard LQR problem. The associated value function $W_k$ of \er{eq:value} is computed (over a range of $k\in\Z_{>0}$) via three approaches, namely, (i) via the difference Riccati equation \er{eq:DRE}, (ii) via a grid-based method, involving direct iteration of the dynamic programming equation \er{eq:DPP} on a discretized state space, and (iii) via the max-plus based computational method of Section \ref{sec:comp-method}.
(Note that (ii) represents a standard computational approach to solving a linear regulator problem where the terminal payoff is not quadratic.) The value function computed via (i) is regarded as the actual solution of the LQR problem, for the purposes of comparing the approximation errors obtained in computations (ii) and (iii). This also facilitates the comparison of computation times required to achieve an apriori fixed approximation error via (ii) and (iii), relative to the solution obtained in (i).

The second example examines in further detail the convergence of the max-plus based fundamental solution that underlies the computational method (iii) of Section \ref{sec:comp-method}. In particular, Theorem \ref{thm:conv-circled} is tested. This is independent of the terminal payoff selected.

The third (and final) example considers an infinite horizon linear regulator problem with a non-quadratic terminal payoff. Value functions for the finite and infinite horizon problems are computed using the computational method (iii) of Section \ref{sec:comp-method}.


\subsection{Benchmarking via an LQR problem}
\label{sec:ex-LQR}

With a view to benchmarking the computational method of Section \ref{sec:comp-method}, consider an LQR problem defined as per \er{eq:value} and \er{eq:payoff}, with $\gamma\doteq\sqrt{10}$,
\begin{align}
	A
	& \doteq
	\left[\ba{cc}-0.1&0\\ -0.2&-0.1\ea\right],
	\
	B\doteq\left[\ba{c}0.1\\0.03\ea\right],
	\
	\Phi\doteq\left[\ba{cc}1&0.2\\0.2&2\ea\right],
	\
	\Lambda\doteq\left[\ba{cc}1&0.2\\0.2 & 0.5\ea\right].
	\label{eq:ex-LQR}
\end{align}
The terminal payoff is quadratic, with $\Psi(x) \doteq \demi x^T \Lambda x$ selected in \er{eq:payoff}.

{\em Computation (i):} The value function $W_k$ corresponding to the solution of LQR problem \er{eq:ex-LQR} can be computed via the difference Riccati equation \er{eq:DRE}. The value function $W_{64}$ computed in this way is
\begin{align}
\label{eq:W64}
W_{64}(x)={\frac{1}{2}}x^T\,P_{64}\,x={\frac{1}{2}}x^T\,\left[\ba{cc}1.1016&0.2429\\0.2429&2.0202\ea\right]\,x\,.
\end{align}
For the comparative purposes, $W_{64}$ is assumed to be actual value function \er{eq:value} that solves the LQR problem \er{eq:ex-LQR}.

\newcommand{\cX}		{{\mathscr{X}}}

{\em Computation (ii):}
An approximation $\widehat W_{64}$ of the value function $W_{64}$ of \er{eq:W64} is computed via a grid-based method. In particular, the dynamic programming equation \er{eq:DPP} is iterated directly, without assuming that the value function is quadratic (as would be the case for a non-quadratic terminal payoff). Bounded and discretized state and control spaces $\cX^2$ and $\cW$ are assumed, with
\begin{align}
	\begin{aligned}
	\mathscr{X}
	& \doteq
	[\ba{cc} -\bar x & \bar x \ea]\cap \mathcal{G}_{\delta_{\mathscr{X}}}\,,
	&& \bar x \doteq 3,\, \delta_{\mathscr{X}} \doteq 0.025\,,
	\\
	\mathscr{W}
	& \doteq [\ba{cc} -\bar w & \bar w \ea] \cap\mathcal{G}_{\delta_{\mathscr{W}}}\,,
	&& \bar w = 1,\, \delta_{\mathscr{W}} = 0.1\,,
	\end{aligned}
	\label{eq:grids}
\end{align}
with $\mathcal{G}_\delta \doteq \{ k\, \delta\in\R \, \big|\, k\in\Z \}$. The dynamic programming principle \er{eq:DPP} is approximated by
\begin{align}
	\widehat{W}_{k+1}
	& = \widehat{\mathcal{S}}_1 \widehat{W}_k\,, \quad \widehat{W}_0 = \Psi\,,
	\label{eq:approx-DPP}
\end{align}
where $(\widehat{S}_1\, \phi):\cX^2\rightarrow\R^-$, $(\widehat{S}_1\, \phi)(x) \doteq \sup_{w\in\mathscr{W}} \left\{ \demi \, x^T\, \Phi\, x - \ts{\frac{\gamma^2}{2}} \, |w|^2 + \phi\circ\pi(A x + B w) \right\}$, approximates \er{eq:op-DPP-1} on $\mathscr{X}^2$ via the projection operator $\pi:\R^2\rightarrow\mathscr{X}^2\subset\R^2$,
\begin{align}
	\pi(x)
	& = \pi \left( \left[ \ba{c} x_1 \\ x_2 \ea \right] \right)
	\doteq \left[ \ba{c} \tilde\pi(x_1) \\ \tilde\pi(x_2) \ea \right]\,,
	\quad
	\tilde\pi(\xi)
	\doteq -\bar x + \delta_{\mathscr{X}} \left\lfloor \frac{\bar x + \min ( \max(\xi,\, -\bar x),\, \bar x)}{\delta_{\mathscr{X}}}
	\right\rfloor .
	\label{eq:ex-proj}
\end{align}
Figure \ref{fig:err}(a) illustrates the relative error $e_{\widehat{W}_{64}}:\R^2\rightarrow\R_{\ge 0}$ between $\widehat{W}_{64}$ and $W_{64}$ of \er{eq:W64}, where
\begin{align}
	e_\phi(x)\doteq\left|\frac{\phi(x)-W_{64}(x)}{1 + W_{64}(x)}\right|
	\label{eq:ex-rel-error}
\end{align}

{\em Computation (iii):}
An approximation $\widehat{W}_{64}^2$ of the value function $W_{64}$ of \er{eq:W64} is computed via the computational method of Section \ref{sec:comp-method}, using the max-space vector space $\SPone$ of \er{eq:SP-all} with $r\doteq10^3$. Figure \ref{fig:err}(b) illustrates the relative error $e_{\widehat{W}_{64}^2}:\R^2\rightarrow\R_{\ge 0}$, where $e_\star$ is as per \er{eq:ex-rel-error}. There, evaluation of $e_{\widehat{W}_{64}^2}$ is artificially restricted to the bounded grid $[\ba{cc} -\bar x & \bar x \ea ] \cap\mathcal{G}_{0.5}\subset\R^2$ for display purposes only. (Recall that the computational method of Section \ref{sec:comp-method} is not a grid-based method.)

\begin{figure}[h]
\begin{center}
\subfigure[Finite grid method of computation (ii).]{
\psfrag{XXXX}{\scriptsize{$x_2$}}
\psfrag{YYYY}{\scriptsize{$x_1$}}
\includegraphics[width=7cm,height=5cm]{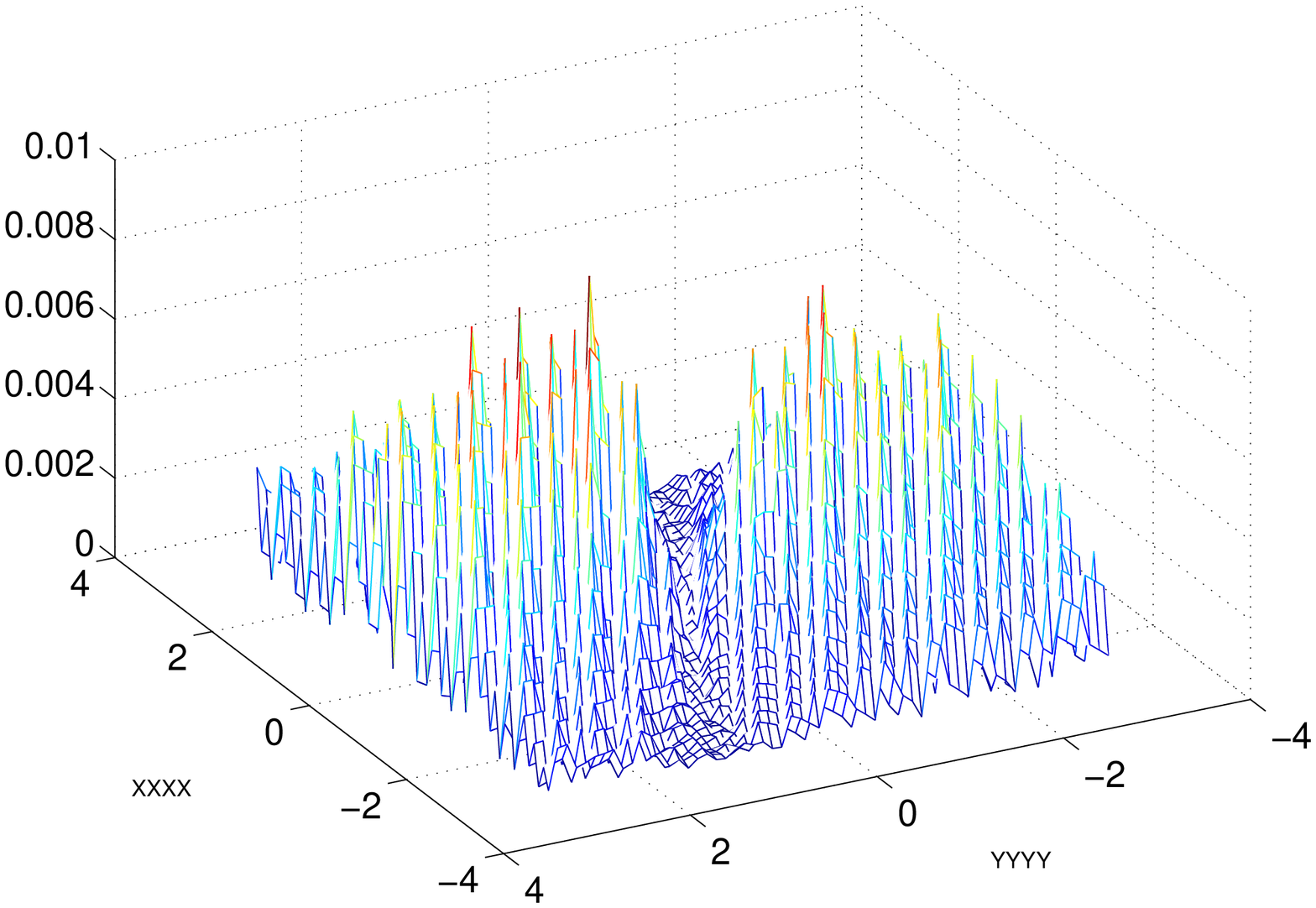}
}
\subfigure[Max-plus method of computation (iii).]{
\psfrag{XX}{\scriptsize{$x_2$}}
\psfrag{YY}{\scriptsize{$x_1$}}
\includegraphics[width=7cm,height=5cm]{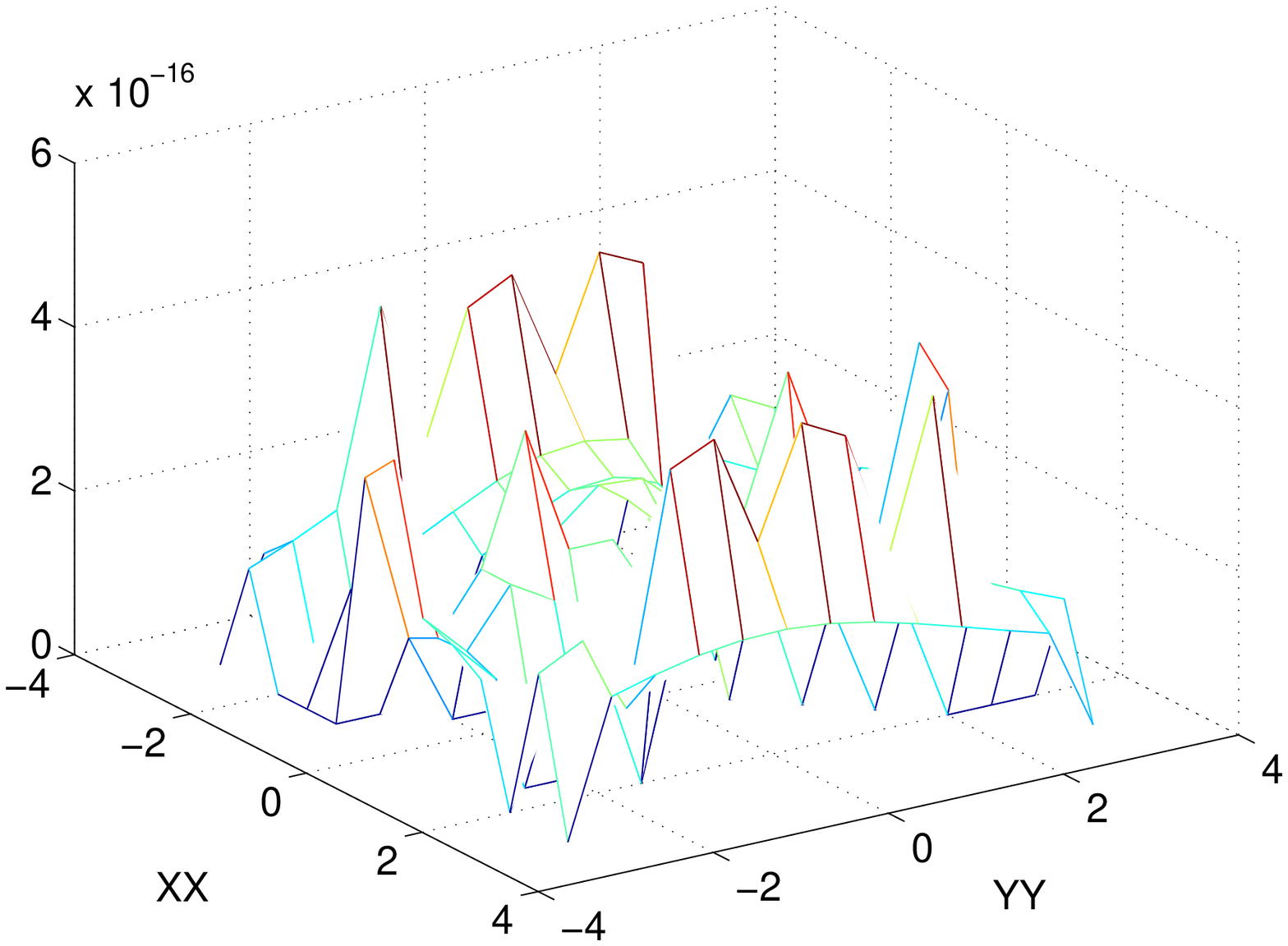}
}
\caption{Relative errors achieved in the approximate solution of an LQR problem (Section \ref{sec:ex-LQR}).}
\label{fig:err}
\end{center}
\end{figure}

\begin{figure}[h]
\begin{center}
\subfigure[Grid-based (DPP iteration) and max-plus method.]{
\psfrag{ddddddddddddddddd}{\scriptsize{control horizon $k$}}
\psfrag{cccccccccccccccc}{\scriptsize{computation time}}
\psfrag{aaaaaaaaaaaaaaaaa}{\scriptsize{grid-based method}}
\psfrag{bbbbbbbbbbbbbbb}{\scriptsize{max-plus method}}
\includegraphics[width=7cm,height=5cm]{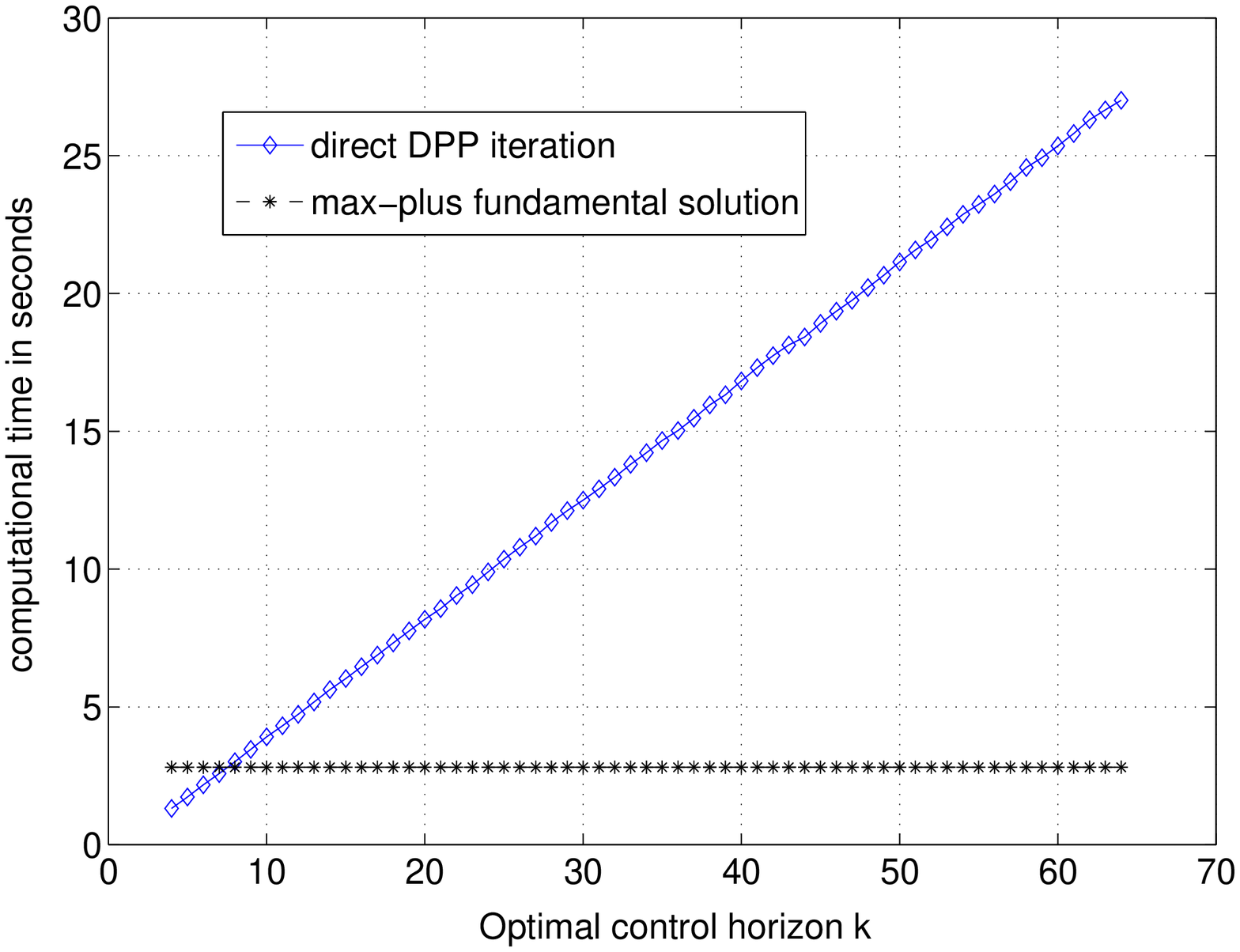}
}
\subfigure[Max-plus method of computation (iii).]{
\psfrag{control horizon k}{\scriptsize{control horizon $k$}}
\psfrag{computational time in seconds}{\scriptsize{computation time}}
\includegraphics[width=7cm,height=5cm]{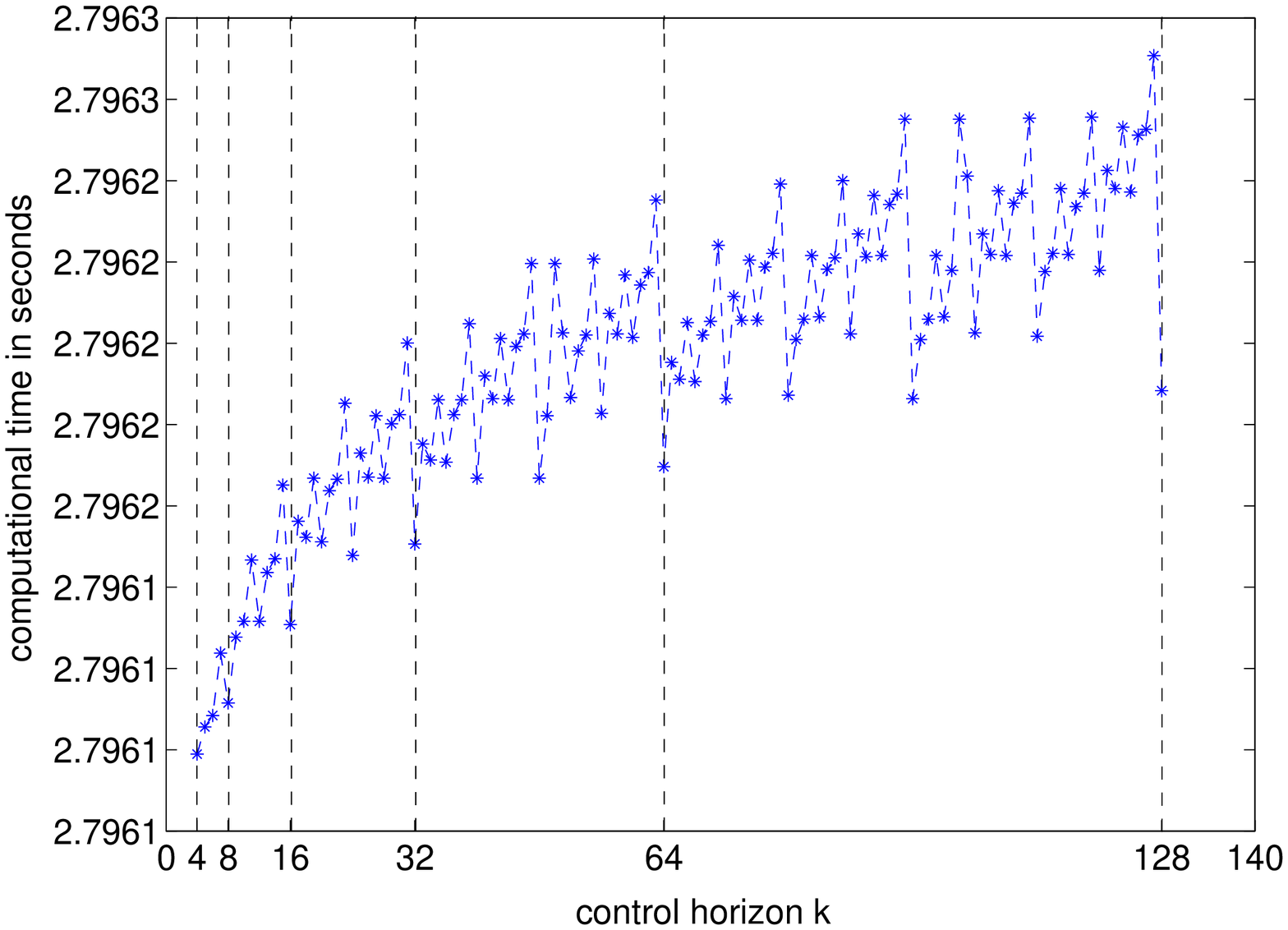}
}
\caption{Computation times achieved in the approximate solution of an LQR problem (Section \ref{sec:ex-LQR}).}
\label{fig:time}
\end{center}
\end{figure}

{\em Error comparison:} By comparison of Figures \ref{fig:err}(a) and (b), it is evident that the max-plus based computation (iii) achieves a significantly smaller relative error than the direct dynamic programming computation (ii) for the same time horizon $k=64$. Indeed, the relative error of computation (iii) is of the order of the machine epsilon for the Dell laptop used. This is attributable to the matrix operations involved in propagating the matrices $\Theta_{k,1}$ in step {\dthree} of the method, and to approximations in the dual / primal operations of steps {\done}, {\dtwo} and {\dfour}, {\dfive}. Meanwhile, the much larger errors observed in computation (ii) are due largely to the state space projection operator $\pi$ of \er{eq:ex-proj} associated with the finite grid employed.

{\em Computation time comparison:} In order to compare computation times of the grid-based computation (ii) and the max-plus based computation (iii), the respective computations of $\widehat{W}_k$ and $\widehat{W}_k^2$ are repeated for all $k\in[1,128]\cup\Z_{>0}$. Time index doubling is employed in the latter computation (iii) to demonstrate the speed-up achievable via the max-plus based computation.
Figure \ref{fig:time}(a) illustrates an overlay of the computation times for computations (ii) and (iii) on the same axes. This demonstrates an approximately linear growth in computation time with time index $k$ for the grid-based method of (ii), and an approximately constant computation time for the max-plus method of (iii). A definitive computational advantage is evident in the max-plus case for all but small time indices. In examining this computational advantage further, Figure \ref{fig:time}(b) illustrates that the computation time for the max-plus based method of (iii) does in fact vary with the time index $k$. This computation time maybe approximated by
$T_k=\hat{t}+t_k$. Here, $\hat{t}$ denotes the time used to compute the dual of terminal payoff in Step {\dtwo}, the matrix $Q_{k,1}=\Gamma^1(\Theta_{k,1})$ in Step {\dfour}, and the value function $\widehat{W}_{k}^2$ in Step {\dfive}. $\hat{t}$ is independent of control horizon $k$, and is $2.7961$ seconds here. $t_k$ denotes the total time used to propagate the Hessian $\Theta_{1,2}$ to Hessian $\Theta_{k,2}$ in Step {\dthree}. The non-monotone behaviour observed in the growth of this computation time is due to the time index doubling employed in the computation (iii). In order to understand this behaviour, it is useful to employ a binary (base-$2$) representation for the time index $k$, with
\begin{align}
	k
	& = \sum_{j=0}^{m_k-1} b_{j}\, 2^{j}
	= (b_{m_{k}-1} \cdots b_2 b_1 b_0)_2 \,,\quad
	b_j\in\{0,1\}, \ j\in[0,m_k-1]\cap\Z, \ m_k \doteq 1+ \lfloor \log_2 k \rfloor\,,
	\nn
\end{align}
in which $m_k\in\Z_{>0}$ denotes the minimum number of ``bits'' required for the base-$2$ representation. By definition, $b_{m_k-1} = 1$ for all $k\in\Z_{\ge 1}$. Using this notation, $n_k\doteq\sum_{j=0}^{m_{k}-1} b_j$ denotes the number of non-zero ``bits'' $b_j$ in this representation of $k$. Let $\tau$ denote the time required to perform the matrix operation $\circledast$ of \er{eq:B-update} employed in the propagation step {\dthree}. (Recall that $\circledast$ is central to the propagation of the Hessian $\Theta_{k,1}$ of the kernel $B_{k,1}$ of the max-plus integral operator $\mathcal{B}_{k,1}$, that is itself central to max-plus based computation (iii) -- see \er{eq:B-update}, \er{eq:quad-fund-dual}, and \er{eq:mp-semigroup} respectively.) Computation of $\widehat{W}_{k}^2 = W_k$ requires $m_k -1$ time index doubling steps to increase the time index from $1$ up to $2^{m_k-1}$, plus an additional $n_k - 1$ time index ``sub-doubling'' steps to further increase the time index from $2^{m_k-1}+1$ up to $k$. For example, a time index of $k=50$ has a $m_{50} = 1 + 5 = 6$ bit binary representation $50=(110010)_2$, with $n_{50} = 3$ non-zero bits, implying that $m_{50} - 1 = 5$ time index doubling steps plus $n_{50} - 1 = 2$ sub-doubling steps are required. Hence, the sequence of these $\circledast$ steps used to compute Hessian $\Theta_{50,1}$ from $\Theta_{1,1}$ (i.e. corresponding to the value function $W_{50}$) is then
\begin{align}
	\underbrace{
	\ba{cccccccccccc}
	&&&&&&&&&&&
	\\
	&&&&&&&&&&&
	\\
	\Theta_{1,1}
	& \rightrightarrows
	& \Theta_{2,1}
	& \rightrightarrows
	& \Theta_{4,1}
	& \rightrightarrows
	& \Theta_{8,1}
	& \rightrightarrows
	& \Theta_{16,1}
	& \rightrightarrows
	& \Theta_{32,1}
	\ea
	\hspace{-3mm}
	}_{\text{Doubling steps}}
	\underbrace{
	\ba{cccc}
		&
		\Theta_{16,1} && \Theta_{2,1}
		\\
		&
		\downarrow && \downarrow
		\\
		\rightarrow
		&
		\Theta_{48,1}
		& \rightarrow
		& \Theta_{50,1}
	\ea
	}_{\text{Sub-doubling steps}}
	\nn
\end{align}
where each arrow corresponds to an incoming argument to a matrix $\circledast$ operation. In general, as each doubling or sub-doubling step requires an application of one $\circledast$ operation (taking time $\tau$ per operation), the total computation time needed to compute $\Theta_{k,2}$ may be approximated by
\begin{align}
	t_k
	& \doteq ((m_k - 1) + (n_k-1)) \, \tau \le 2\, \tau\, (m_k - 1) = 2 \, \tau\, \lfloor \log_2 k \rfloor\,.
	\label{eq:comp-time}
\end{align}
Hence, the non-monotone growth of the computation time $t_k$ observed in Figure \ref{fig:time}(b) is due to the dependence of $t_k$ on $k$ above in \er{eq:comp-time}. This computation time is independent of the terminal payoff selected (whether quadratic or non-quadratic).

In this specific implementation of the propagation $\Theta_{k,1}$ in Step {\dthree}, $n_k$ matrices $\Theta_{2^j,1}$ for $j\in[0,n_k-1]\cap \Z$ such that $b_j=1$ must be stored in order to perform the ``sub-doubling'' steps. In the worst case, $n_k=m_k=1+\lfloor \log_2{k}\rfloor$ steps are required (where $k=2^{m_k}-1$). In order to avoid the attendant increase in memory required to store all $n_k$ matrices $\Theta_{2^j,1}$, $j\in[1,n_k]\cap\Z$, some matrices (for example, those ones with smaller $j$) need not be stored. Instead, they can be recomputed from $\Theta_{1,1}$ using the $\circledast$ matrix operation. In the worst case (for computation time), all such matrices used in the ``sub-doubling'' steps can be recomputed. The worst-case total time required for computing $\Theta_{k,1}, k=2^{m_k}-1$ using such a scheme is given by
$$
t_k= \left(\sum_{j=1}^{m_k-1}j+(m_k-1)\right)\,\tau=(m_k-1)\left({\frac{m_k}{2}}+1\right)\,\tau=\lfloor \log_2 k \rfloor\,\left(\frac{\lfloor \log_2 k \rfloor+3}{2}\right)\,\tau.
$$
It may be noted that for current computational platforms and typical linear regulator problems, this worst-case recomputation is not required, as the memory usage remains relatively small.

\subsection{Convergence of the max-plus based fundamental solution on $\SPthree$}
For infinite horizon linear regulator problems, convergence of a sequence of Hessians $\{\Theta_{2^{k},i}\}_{k=1}^\infty$, $i\in\{1,2,3\}$, generated via time index doubling (for example) is crucial to the application of the computational method of Section \ref{sec:comp-method}. Theorem \ref{thm:conv-circled} states that this sequence is convergent, under specific conditions. The purpose of this example is to test the conditions of that theorem. To this end, consider a linear regulator problem defined as per \er{eq:value} and \er{eq:payoff}, with
\begin{align}
	A & \doteq \left[\ba{cc}-0.2&0.1\\-0.15 &0 \ea\right],
	\quad
	B \doteq \left[\ba{cc}1&0\\0&1 \ea\right],
	\quad
	\Phi \doteq  \left[\ba{cc}0.6&0\\0&0.2\ea\right],
	\quad
	\gamma = \sqrt{8}\,.
	\label{eq:ex-convergence}
\end{align}
(Note that convergence or otherwise of the aforementioned sequence is independent of the terminal payoff $\Psi$. Hence, $\Psi$ is not specified in this example.) The sequence of interest, generated by time index doubling in computing the fundamental solution in $\mathscr{B}_r^3$, is
\begin{align}
	\Theta_{2^{k+1},3}
	& = \Theta_{2^k,3}\circledast\Theta_{2^k,3}\ , \quad\quad k\in\Z_{\ge0}\,,
	\label{eq:sequence-theta-k-3}
\end{align}
initialized with $\Theta_{1,3}\doteq-Q_{1,3}$ where $Q_{1,3}$ is given by \er{eq:Q-1-3}. In order to verify the convergence of this sequence via Theorem \ref{thm:conv-circled}, define
\begin{align}
	\sigma
	&\doteq \lambda_{\max}(\Theta_{1,3}^{12}\Theta_{1,3}^{21})
	= \lambda_{\max}(\gamma^4A^T(BB^T)^{-2}A)
	= 4.4321\,,
	\nn\\
	\lambda
	& \doteq \lambda_{\min}(\Theta_{1,3}^{11}+\Theta_{1,3}^{22})
	= \lambda_{\min}(-\Phi+\gamma^2A^T(BB^T)^{-1}A+\gamma^2(BB^T)^{-1})
	= 7.7297\,.
	\nn
\end{align}
These definitions imply that condition \er{eq:ineq-Omega} of Theorem \ref{thm:conv-circled} holds for $\Omega = \Theta_{1,3}$. The remaining condition \er{eq:rhosiglam} of Theorem \ref{thm:conv-circled} holds if there exists $\bar\rho>\sqrt{\sigma}$ such that $f(\bar\rho)>0$, where
\begin{align}
	f(\rho)
	& \doteq \lambda-\rho-2\rho^{-1}\sigma(1-\rho^{-2}\sigma)^{-1}\,.
	\label{eq:ex-function-f}
\end{align}
This may readily be verified via some simple working, or graphically via Figure \ref{fig:conv}(a). (For example, select $\bar\rho \doteq 4$.)
Hence, the conditions of Theorem \ref{thm:conv-circled} hold, so that the matrix sequence \er{eq:sequence-theta-k-3} must converge to the matrix limit $\Theta_{\infty,3} = \text{diag}(\Theta_{\infty,3}^{11}, \, \Theta_{\infty,3}^{22} )$. This convergence may be observed by enumerating the sequence for sufficiently large $k$. Figure \ref{fig:conv}(b) illustrates the sequences $\{\sigma_{2^k}\}$ and $\{\lambda_{2^k}\}$ of \er{eq:seqa}, and the sequences $\{\sigma_{2^k}'\}$ and $\{\lambda_{2^k}'\}$ defined by
\begin{align}
	\sigma'_{2^{k-1}}
	\doteq \lambda_{\max}(\Theta_{2^{k-1},3}^{12}\Theta_{2^{k-1},3}^{21})\,,
	\quad
	\lambda'_{2^{k-1}}
	\doteq \lambda_{\min}(\Theta_{2^{k-1},3}^{11}+\Theta_{2^{k-1},3}^{22})\,.
	\nn
\end{align}
These sequences may be observed to be monotone, as expected. The aforementioned limit $\Theta_{\infty,3}$ may be computed as
$$
	\Theta_{\infty,3}
	= \left[\ba{cc|cc}
		-0.6313&0.0135&0.000&0.0000
		\\
		0.0135&-0.2069&0.0000&0.0000
		\\\hline
		0.0000&0.0000&7.5921&-0.2502
		\\
		0.0000&0.0000&-0.2502&7.8072
	\ea\right].
$$

\begin{figure}[h]
\begin{center}
\subfigure[Function $f$ of \er{eq:ex-function-f}.]{
\psfrag{xx}{\scriptsize{$\rho$}}
\psfrag{ffff}{\scriptsize{$f(\rho)$}}
\includegraphics[width=7cm,height=5cm]{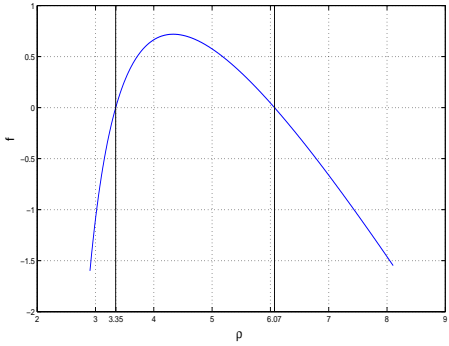}
}
\subfigure[Convergence of sequences $\sigma_{2^k},\sigma'_{2^k},\lambda_{2^k},\lambda'_{2^{k}}$.]{
\psfrag{time k}{\scriptsize{control horizon $k$}}
\psfrag{xxxxxxxx}{\scriptsize$\sigma'_{2^{k-1}}$}
\psfrag{yyyyyyyy}{\scriptsize$\sigma_{2^{k-1}}$}
\psfrag{zzzzzzzz}{\scriptsize$\lambda'_{2^{k-1}}$}
\psfrag{qqqqqqqq}{\scriptsize$\lambda_{2^{k-1}}$}
\includegraphics[width=7cm,height=5cm]{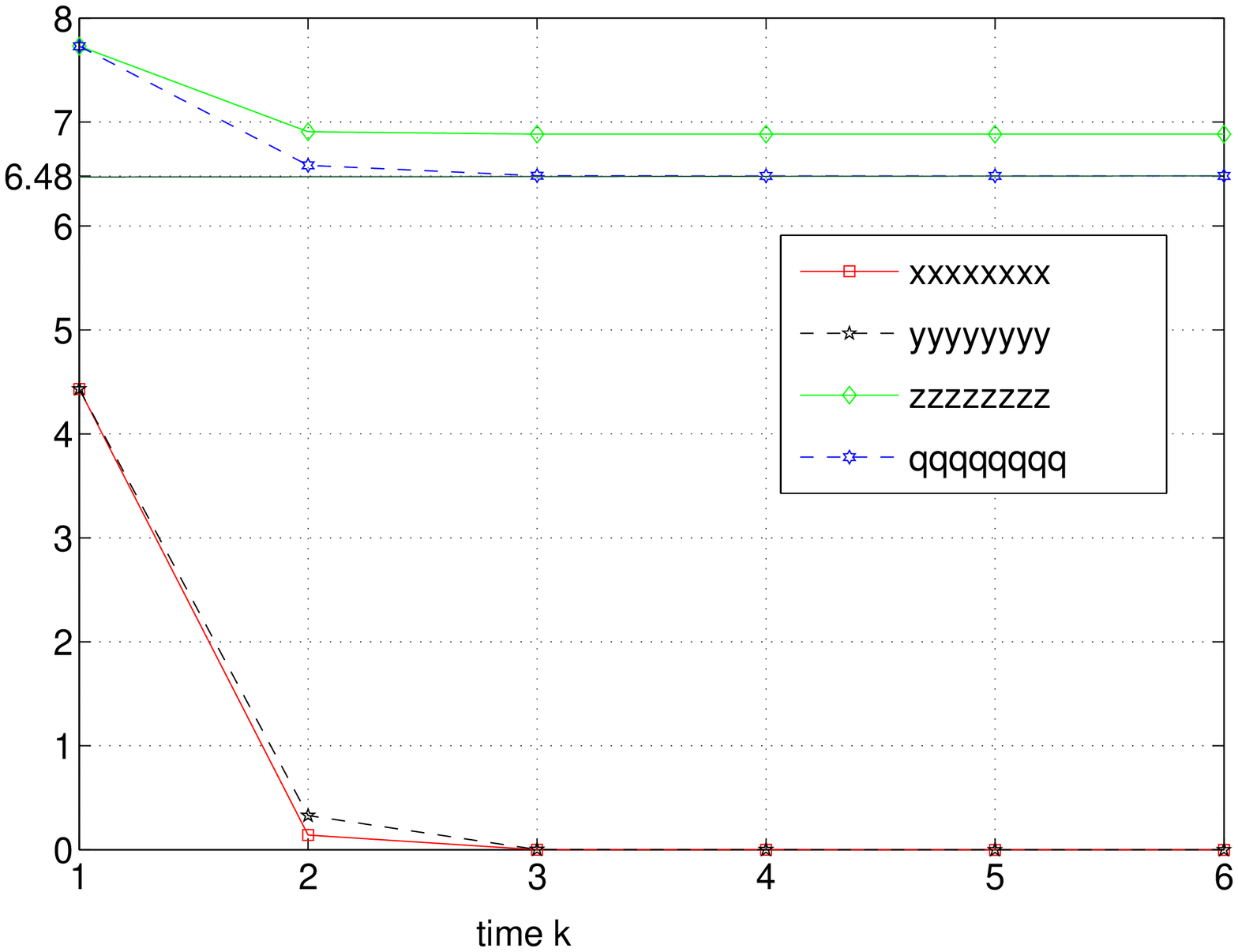}
}
\caption{Convergence of the max-plus based fundamental solution on $\mathscr{B}_r^3$.}
\label{fig:conv}
\end{center}
\end{figure}


\subsection{Infinite horizon linear regulator problem with non-quadratic payoff on $\SPtwo$}
In order to demonstrate that the value function of infinite horizon linear regulator problem is quadratic with an offset according to Theorem 4.5, consider the linear regulator problem with non-quadratic payoff given by
\begin{align}
	A \doteq & \left[\ba{cc}-0.12&0\\0.1 &0.15 \ea\right],
	\quad
	B \doteq \left[\ba{cc}-0.2\\0.1 \ea\right],
	\quad
	\Phi \doteq \left[\ba{cc}3&-1.4\\-1.4&2.4\ea\right],
    \quad
    \gamma\doteq 2,
	\nn\\
	\Psi(x)& = \Psi( [ \ba{cc} x_1 & x_2 \ea ]^T ) \doteq 3|x_2+1|\,|\sin(x_1-1)|\,.
	\nn
\end{align}	
The max-plus based fundamental solution on $\SPtwo$ is employed, with $M \doteq \left[\ba{cc}10 &0\\0&10 \ea\right]$. 
Figure \ref{fig:terminal} shows the non-quadratic terminal payoff $\Psi$ and its max-plus dual $\widehat{\Psi}$. Note that $\Psi$ and $\widehat\Psi$ appear similar since a relatively big $M$ is used. Recall that $\Psi$ and $\widehat\Psi$ will be the same when $M\rightarrow\infty\,I$ which corresponds to the duality in $\SP{3}$.

\begin{figure}[h]
\begin{center}
\subfigure[Terminal payoff $\Psi$.]{
\psfrag{xxxx}{\scriptsize{$x_2$}}
\psfrag{yyyy}{\scriptsize{$x_1$}}
\psfrag{zzzzzz}{\scriptsize{$\Psi$}}
\includegraphics[width=7cm,height=5cm]{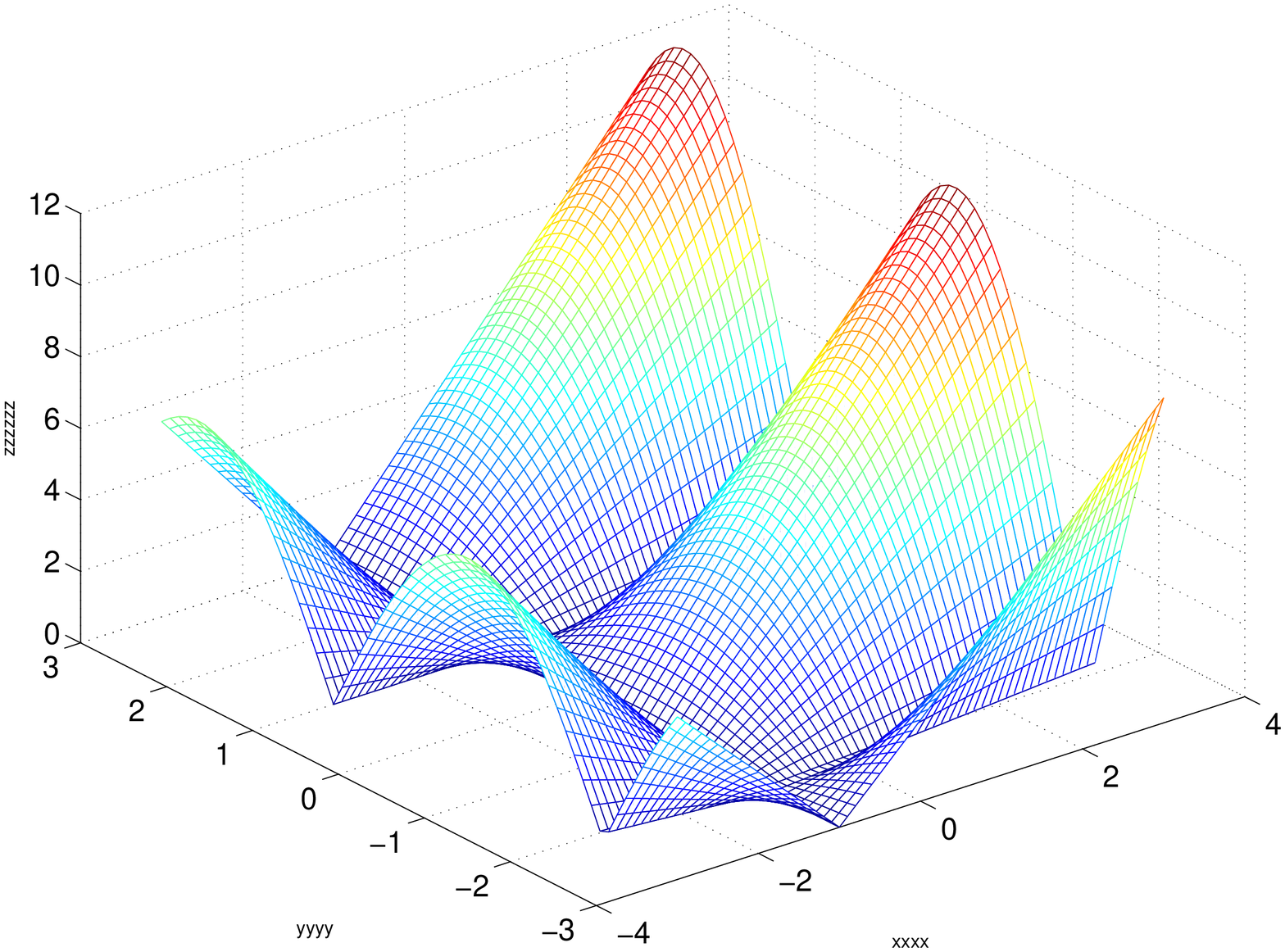}
}
\subfigure[Max-plus dual  $\widehat\Psi$ of the terminal payoff $\Psi$.]{
\psfrag{xxxx}{\scriptsize {$x_2$}}
\psfrag{yyyy}{\scriptsize{$x_1$}}
\psfrag{zzzzzz}{\scriptsize{$\widehat\Psi$}}
\includegraphics[width=7cm,height=5cm]{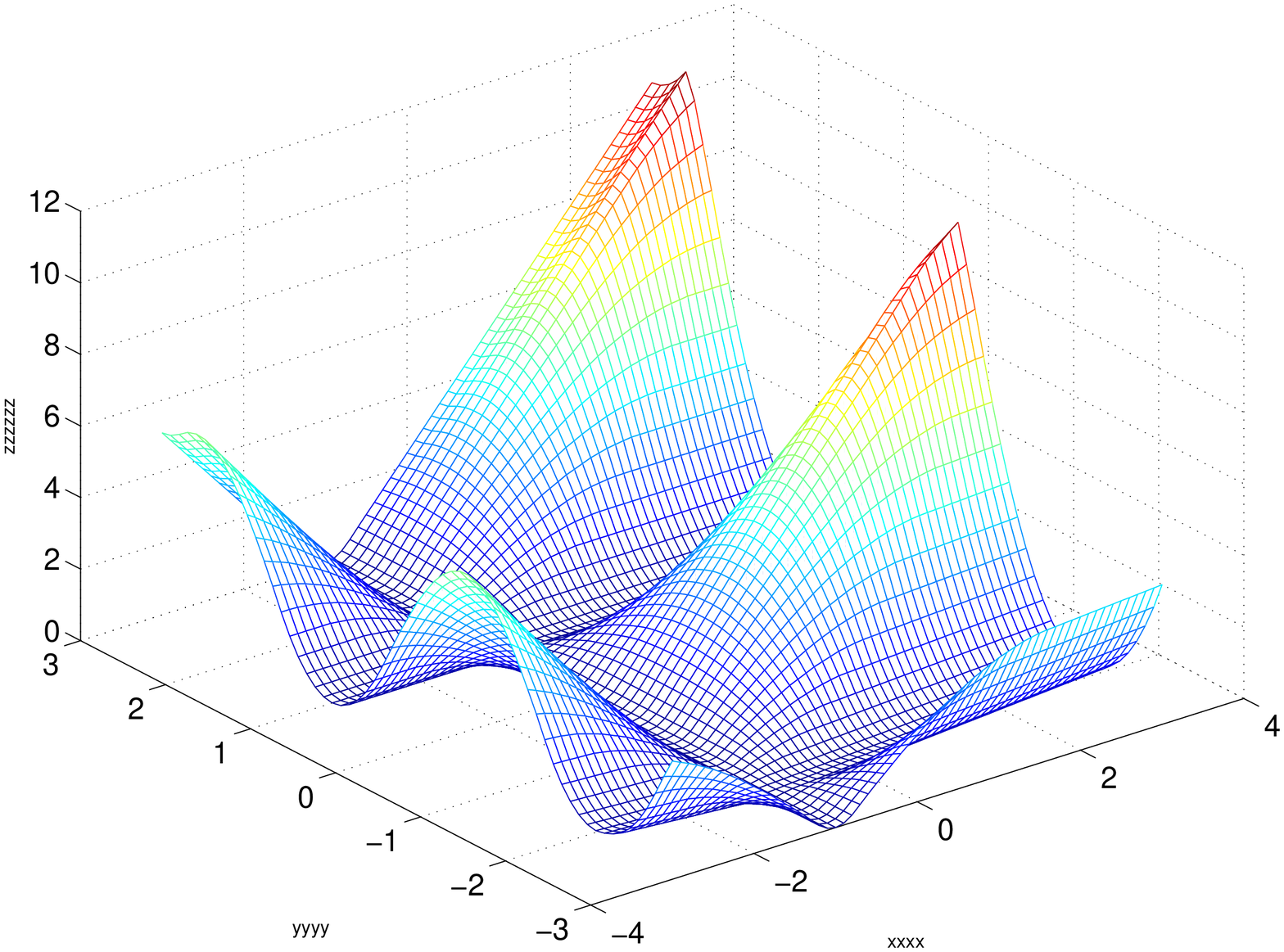}
}
\caption{Non-quadratic terminal payoff $\Psi$ and its max-plus dual $\widehat\Psi$.}
\label{fig:terminal}
\end{center}
\end{figure}

The convergence of the sequence $\{\Theta_{k,2}\}_{k=1}^\infty$ is essential to compute the value function of the infinite horizon linear regulator problems. According to Theorem 4.2, Theorem 4.3, and Theorem 4.4,  $\Theta_{k,2}\rightarrow \Theta_{\infty,2}=\left[\ba{cc} \Theta_{\infty,2}^{11}&0\\0&\Theta_{\infty,2}^{22} \ea\right]$ if the inequalities in Theorem 4.2 are satisfied for $\Theta_{1,2}$. This can be tested similarly to the example in Section 5.2. In particular, here $\Theta_{1,2}$ is computed by (37)
$$
\Theta_{1,2}= \left[\ba{cc|cc}
 -2.0555   & 1.0036 &   0.8266 &  -0.8816
 \\
    1.0036  & -1.6630 &   0.0497 &  -1.3155
    \\\hline
    0.8266  &  0.0497 &   9.1975 &   0.3607
    \\
   -0.8816 &  -1.3155 &   0.3607 &  10.0522
   \ea\right].
$$
Take 
\begin{align}
\nn
\hat\sigma\doteq \lambda_{\text{max}}(\Theta_{1,2}^{12}\Theta_{1,2}^{21})=2.8054,
\quad
\hat\lambda\doteq\lambda_{\text{min}}(\Theta_{1,2}^{11}+\Theta_{1,2}^{22})=  6.2655.
\end{align}
From Lemma 4.1, the conditions in Theorem 4.2 will be satisfied if there exists a $\hat\rho>\sqrt{\hat\sigma}$ such that $\hat f(\hat\rho)>0$, where the function $\hat f$ is
\begin{align}
\label{eq:fun_hatf}
\hat f(\rho)\doteq \hat\lambda-\rho-2{\rho}^{-1}\hat\sigma(1-\rho^{-2}\hat\sigma)^{-1}
\end{align}
as shown in Panel (a) in Figure \ref{fig:fun_hatf}. By observation, $\hat f(\hat\rho)>0$ for any $2.6249<\hat\rho<5.0049$. Thus, according to Lemma 5.1 and Theorem 5.2, the sequences defined by 
\begin{align}
	\hat\sigma'_{2^{k-1}}
	\doteq \lambda_{\max}(\Theta_{2^{k-1},2}^{12}\Theta_{2^{k-1},2}^{21})\,,
	\quad
	\hat\lambda'_{2^{k-1}}
	\doteq \lambda_{\min}(\Theta_{2^{k-1},2}^{11}+\Theta_{2^{k-1},2}^{22})\,.
	\nn
\end{align}
converge as shown in Panel (b) of Figure  \ref{fig:fun_hatf}. Hence, the sequence $\{\Theta_{k,2}\}_{k=1}^\infty$ converges to a block diagonal matrix as $k\rightarrow\infty$ which is computed as
\begin{align}
\Theta_{\infty,2}= \left[\ba{cc|cc}
-2.2859   & 0.8275  &  0.0000 &   0.0000
\\
    0.8275 &  -1.8835  &  0.0000 &   0.0000
    \\\hline
    0.0000  &  0.0000  &  9.0986  &  0.4467
\\
    0.0000 &   0.0000 &   0.4467 &   9.7773
    \ea\right]
\end{align}
Consequently, the $Q_{\infty,2}=\text{diag}(M(\Theta_{\infty,2}^{11}+M)^{-1}M-M,-\Theta_{\infty,2}^{22})$ is 
\begin{align}
Q_{\infty,2}= \left[\ba{cc|cc}
    3.1067  & -1.3362 &  0.0000  & 0.0000
    \\
   -1.3362  &  2.4568 &  0.0000 &  0.0000
   \\\hline
   0.0000 &  0.0000 &  -9.0986 &  -0.4467
   \\
   0.0000 &  0.0000  & -0.4467 &  -9.7773
    \ea\right]
\end{align}

\begin{figure}[h]
\begin{center}
\subfigure[The function $\hat{f}$ of \er{eq:fun_hatf}.]{
\psfrag{xxxx}{\scriptsize {$\rho$}}
\psfrag{ffff}{\scriptsize{$\hat f(\rho)$}}
\includegraphics[width=7cm,height=5cm]{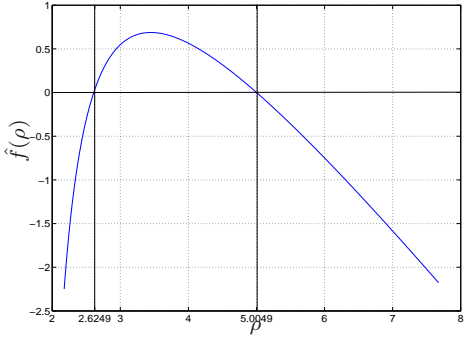}
}
\subfigure[Convergence of various sequences.]{
\psfrag{xxxxxxxxxxxxxxx}{\scriptsize{control horizon $k$}}
\psfrag{aaaaaaaaaaaa}{\scriptsize$\hat{\sigma}'_{2^{k-1}}$}
\psfrag{bbbbbbbbbbbb}{\scriptsize$\hat{\sigma}_{2^{k-1}}$}
\psfrag{ccccccccccccc}{\scriptsize$\hat{\lambda}'_{2^{k-1}}$}
\psfrag{dddddddddddd}{\scriptsize$\hat{\lambda}_{2^{k-1}}$}\includegraphics[width=7cm,height=5cm]{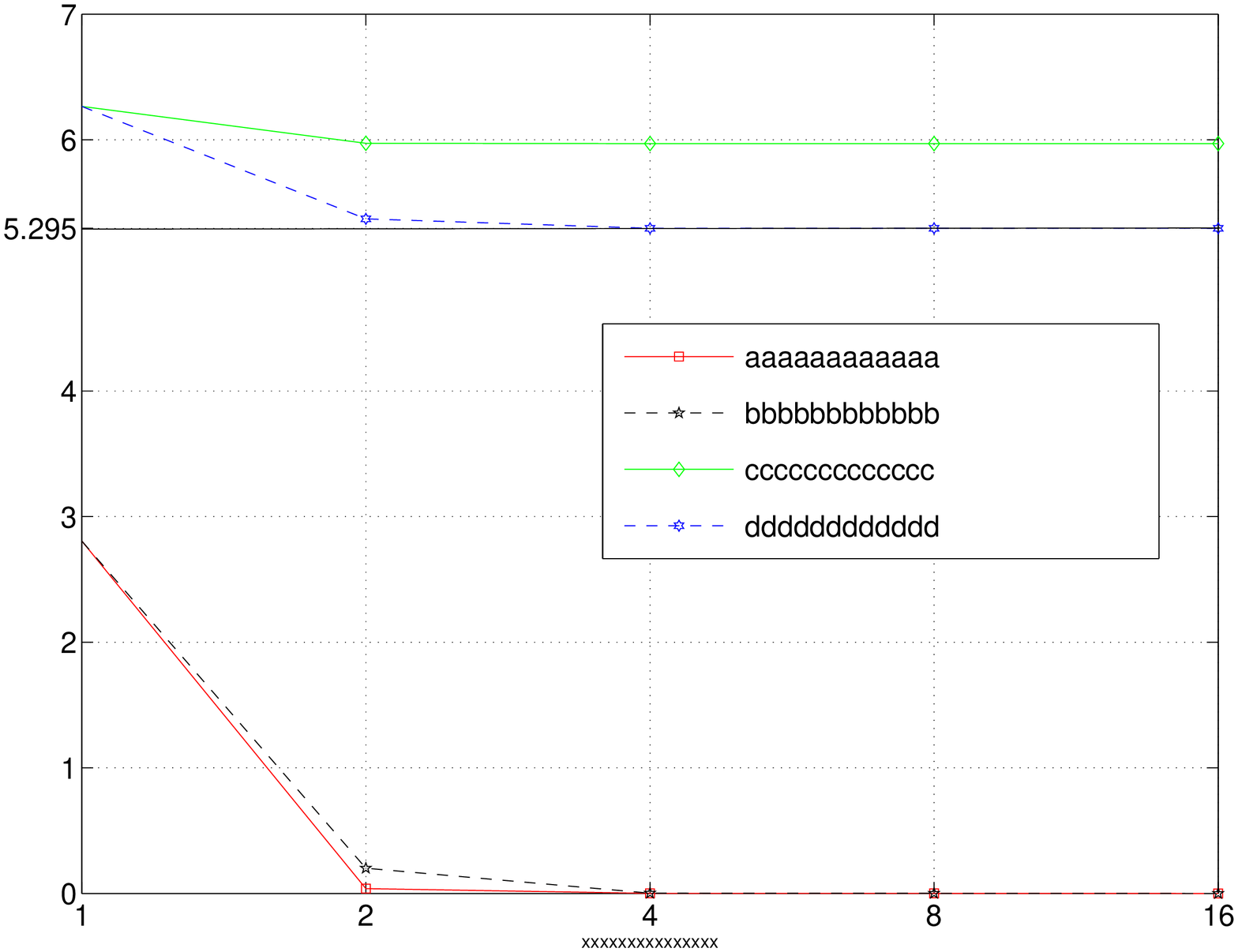}
}
\caption{Convergence of the max-plus fundamental solution $\Theta_{k,2}$.}
\label{fig:fun_hatf}
\end{center}
\end{figure}

It can be verified that $Q_{\infty,2}^{22}<0$ since the eigenvalues of $Q_{\infty,2}^{22}$ are $-9.990$ and $-8.8770$. It is also noted that the terminal payoff $\Psi$ (hence its dual) is oscillating on $x_1$ and linear on $x_2$. Thus, the conditions on Theorem 4.5 (equation (66)) is satisfied. Consequently, the infinite horizon value function $W_\infty$ is quadratic with an offset as given by equation \er{eq:infty-value}. The offset $\kappa$ is computed as 
$$
\kappa\doteq\max_{z\in\R^n}\left\{\widehat\Psi(z)+\frac{1}{2}z^T Q_{\infty,2}^{22}z\right\}= 2.5785.
$$
The value function $W_{\infty}$ is shown in Panel (a) of Figure \ref{fig:val-nonl}. To verify that $W_\infty$ is indeed quadratic, an approximation $\widetilde{W}_\infty$ is computed using the grid based method similar to example 1 in Section 5.1. The relative error defined by 
\begin{align}
\label{eq:err-inf}
	{e}_{\widetilde{W}_\infty}(x)\doteq\left|\frac{\widetilde{W}_\infty(x)-W_{\infty}(x)}{1 + {W}_{\infty}(x)}\right|
\end{align}
is shown in Panel (b) in Figure \ref{fig:val-nonl}. A small relative error  verifies the developed max-plus computational method.
\begin{figure}[h]
\begin{center}
\subfigure[The infinite horizon value function $W_{\infty}$.]{
\psfrag{xxxx}{\scriptsize$x_1$}
\psfrag{yyyy}{\scriptsize$x_2$}
\psfrag{zzzzzzzz}{\scriptsize$W_\infty$}
\includegraphics[width=7cm,height=5cm]{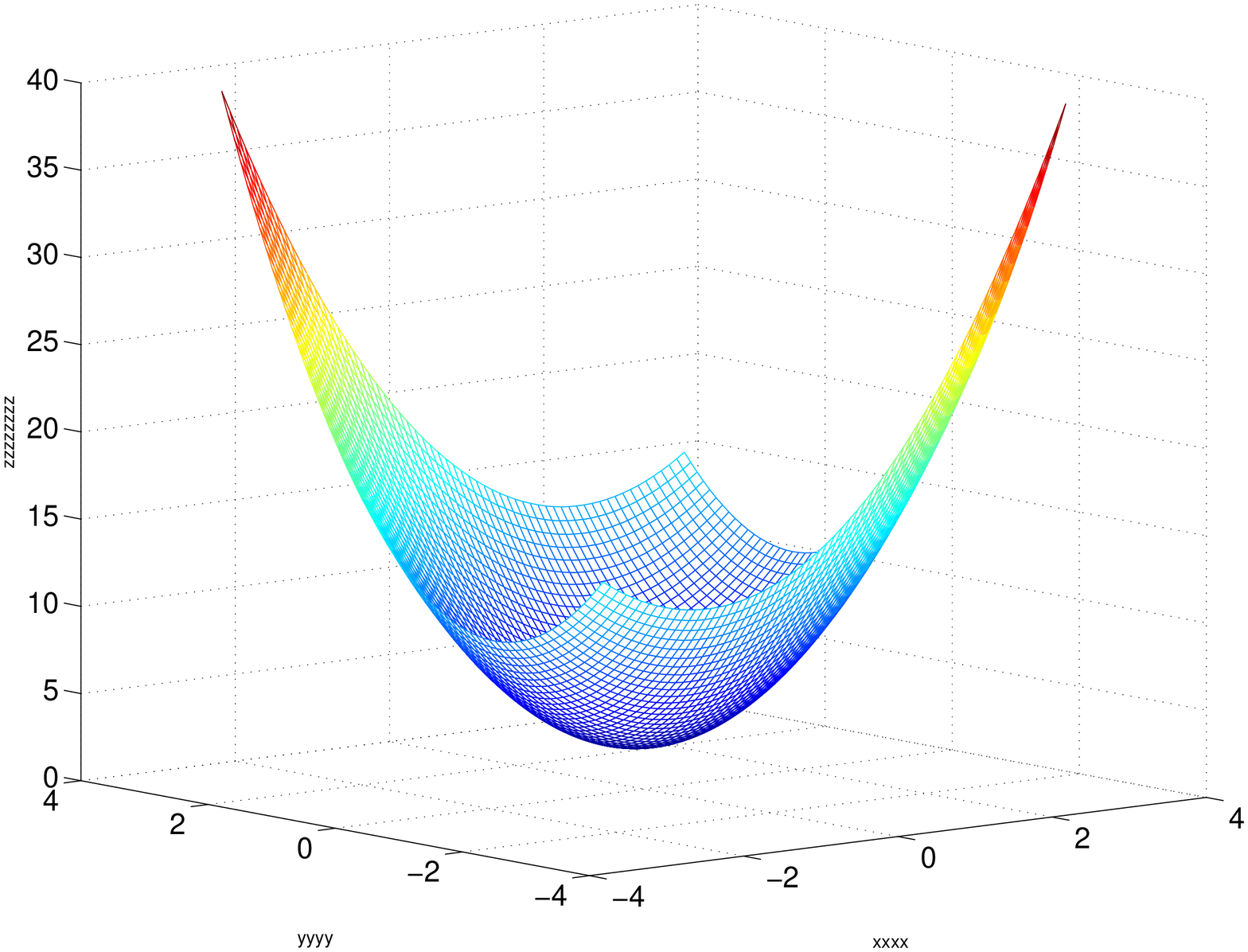}}
\subfigure[The error $e_{\widetilde{W}_\infty}$ for grid based method.]{
\psfrag{xxxx}{\scriptsize$x_1$}
\psfrag{yyyy}{\scriptsize$x_2$}
\includegraphics[width=7cm,height=5cm]{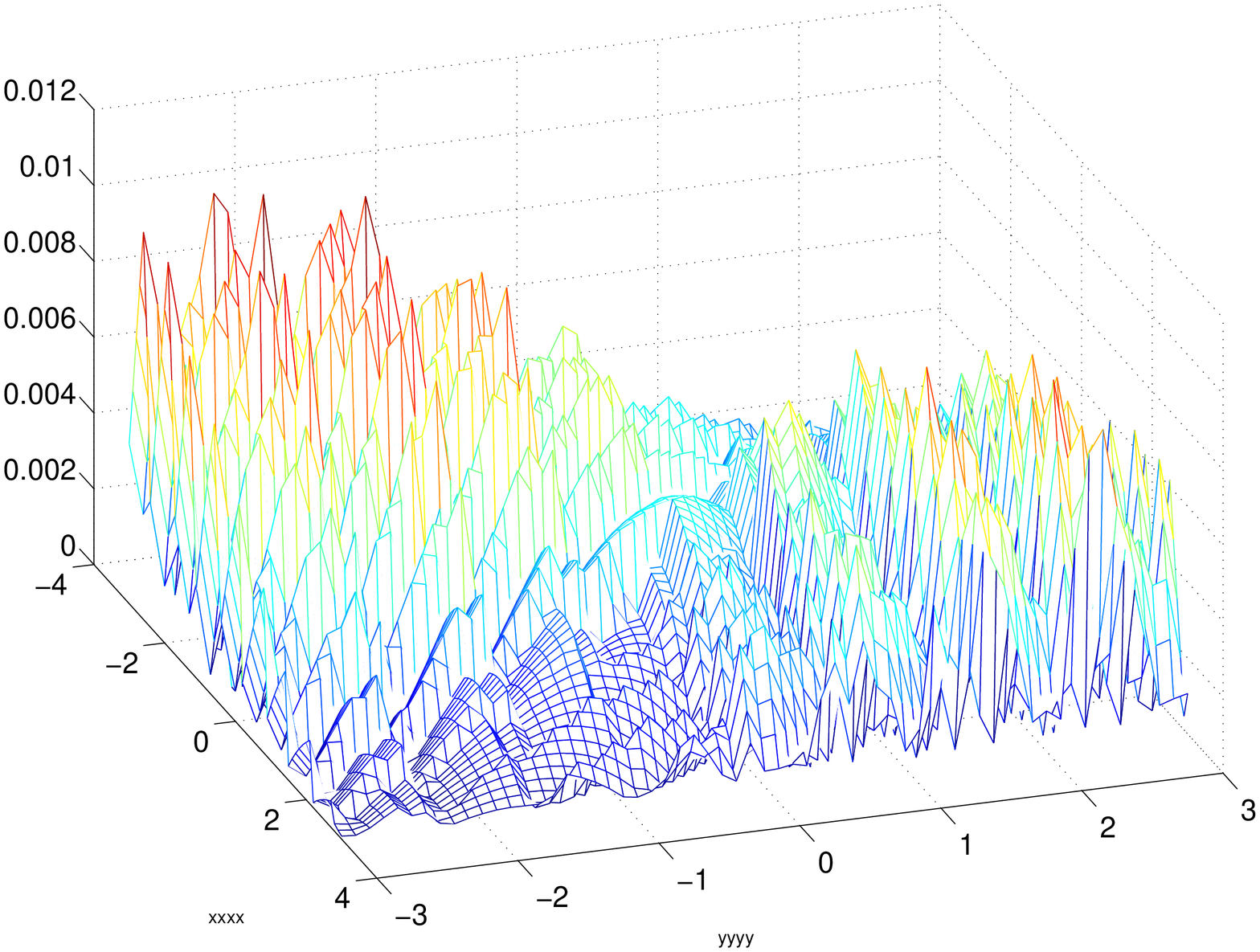}
}
\caption{The infinite horizon value function and relative error \er{eq:err-inf} for grid based method.}
\label{fig:val-nonl}
\end{center}
\end{figure}

\section{Conclusions}
\label{sec:conc}

An efficient computational method is developed for solving a class of discrete time linear regulator problems employing a non-quadratic terminal payoff. Max-plus linearity of the corresponding dynamic programming evolution operator is exploited to obtain a max-plus based solution from which the associated value function may be computed conveniently for any non-quadratic terminal payoff. The computation of the max-plus based fundamental solution is reduced to a sequence of matrix iterations which can be computed efficiently and accurately. A sufficient condition for the convergence of the finite horizon value function to the corresponding infinite horizon value function is presented. This convergence result generalizes the well-known convergence results of difference Riccati equations. Numerical examples are given to demonstrate the performance of the proposed method.

\bibliographystyle{plain}
\bibliography{L2}

\end{document}